\tikzset{font=\small}
\newtheorem{theorem}{Theorem} [section]
\newtheorem{lemma}[theorem]{Lemma}
\newtheorem{corollary}[theorem]{Corollary}
\newtheorem{proposition}[theorem]{Proposition}
\theoremstyle{definition}
\newtheorem{definition}[theorem]{Definition}
\newtheorem{remark}[theorem]{Remark}
\newtheorem{example}[theorem]{Example}
\newtheorem{thm}{Theorem}
\subjclass[2010]
{22A22, 22A30, 20M18, 18B40, 06E15, 06E75}
\keywords{\'Etale groupoid, ample groupoid, \'etale category, ample category, restriction semigroup, Ehresmann semigroup, inverse semigroup, range semigroup, range category, non-commutative Stone duality}
\numberwithin{equation}{section}
\title[Relating ample categories with Boolean restriction semigroups]{Relating ample and biample topological categories with Boolean restriction and range semigroups}
\author{Ganna Kudryavtseva}
\address{G. Kudryavtseva: University of Ljubljana,
Faculty of Mathematics and Physics, Jadranska ulica~19, SI-1000 Ljubljana, Slovenia / Institute of Mathematics, Physics and Mechanics, Jadranska ulica 19, SI-1000 Ljubljana, Slovenia}
\email{ganna.kudryavtseva\symbol{64}fmf.uni-lj.si}
\thanks{The author was supported by ARIS grants P1-0288 and J1-60025.}
\begin{document}

\begin{abstract}  
We extend the equivalence by Cockett and Garner between restriction monoids and ample categories to the setting of Boolean range semigroups which are non-unital one-object versions of range categories. We show that Boolean range semigroups are equivalent to ample topological categories where the range map $r$ is open, and \'etale Boolean range semigroups are
 equivalent to biample topological categories. These results yield the equivalence between \'etale Boolean range semigroups and Boolean birestriction semigroups and a characterization of when a Boolean restriction semigroup admits a compatible cosupport operation. We also recover the equivalence between Boolean birestriction semigroups and biample topological categories by Kudryavtseva and Lawson. Our technique builds on the usual constructions relating inverse semigroups with ample topological groupoids via germs and slices.
\end{abstract}
\maketitle

\tableofcontents

\section{Introduction}
\subsection{Motivation and purpose} This paper is a contribution to the theory of non-commutative Stone dualities relating \'etale groupoids and categories with non-commutative generalizations of generalized Boolean algebras, such as Boolean inverse or Boolean restriction semigroups \cite{CG21, KL17, L10, L12, LL13, Re07}. The interest to this topic is motivated by the prominent role played by \'etale groupoids in construction of operator algebras \cite{Paterson, R80} and Steinberg algebras \cite{CFST14, St10} with rich structure and extreme properties. Among many others, these include graph and higher rank graph $C^*$-algebras \cite{KP00, KPRR97}, Leavitt path algebras of graphs~\cite{AASM_book} and Kumjian-Pask algebras of higher rank graphs \cite{APCHR13}, which themselves form fast evolving and well established fields of research in analysis and algebra. 

Boolean inverse semigroups were introduced by Lawson \cite{L10,L12}, who  showed that they are in a duality with ample topological groupoids\footnote{Ample topological groupoids are called {\em Boolean groupoids} in \cite{L10, L12}.} which extends the classical Stone duality between Boolean algebras and Stone spaces \cite{Stone37}. Lawson's work develops the ideas of the earlier work by Resende \cite{Re07} where pseudogroups (which are related to Boolean inverse semigroups similarly to as frames are to Boolean algebras) are put in correspondence with localic \'etale groupoids which are non-commutative generalizations of locales. Boolean inverse semigroups, their variations, generalizations and applications were further explored in Lawson and Lenz \cite{LL13}, Kudryavtseva and Lawson \cite{KL17}, Wehrung \cite{W17,W18}, Ara, Bosa, Pardo and Sims \cite{ABPS21}, Lawson and Resende \cite{LR21}, Lawson and Vdovina \cite{LV20}, Cockett and Garner \cite{CG21}, Garner \cite{G23a,G23b}, Steinberg and Szak\'acs \cite{StSz23}, Kudryavtseva \cite{K21}, Cordeiro \cite{C23}, among other works.

It is well known that to construct a Boolean inverse semigroup from an ample topological groupoid, one takes all its compact bislices\footnote{What we call bislices is usually called slices in the literature.}, see \cite[Proposition 4]{L23}. Kudryavtseva and Lawson \cite{KL17} extended this construction from ample groupoids to biample categories, where compact bislices form a Boolean birestriction semigroup\footnote{Boolean birestriction semigroups are called Boolean restriction semigroups in \cite{KL17}.}.
These results were recently applied by de Castro and Machado \cite{CM23} who initiated the study of non self-adjoint operator algebras associated to biample topological categories which generalize groupoid $C^*$-algebras.

Cockett and Garner \cite{CG21} pursued investigating non-regular generalizations of Boolean inverse semigroups much further and established the equivalence between what they term join restriction categories which are hyperconnected over a join restriction category ${\mathcal C}$ with local glueings and partite source-\'etale internal categories in ${\mathcal C}$. As a special case, this includes the equivalence between Boolean restriction monoids and ample topological categories (whose space of identities is compact). Unlike the earlier dualities for Boolean birestriction and inverse semigroups, this equivalence may look unbalanced in that restriction monoids therein are one-sided objects (they have the unary support operation $^*$, while no unary cosupport operation $^+$), but are put in correspondence with categories which are two-sided objects (they have both the domain map $d$ and the range map $r$). This may look even more perplexing in the light of the result of Gould and Hollings \cite{GH09}, which is an extension of the Ehresmann-Schein-Nambooripad theorem (see, for example, \cite{Lawson_book}) and states that restriction semigroups are equivalent to  inductive {\em constellations} which are one-sided variants of categories without the range map (see Gould and Stokes \cite{GS17,GS22,GS24}). 

Since the Cockett-Garner equivalence \cite{CG21} puts one-sided objects (restriction monoids) into correspondence with two-sided objects (categories), it is reasonable to wonder about possible enrichment of this equivalence to the setting where restriction monoids admit a compatible cosupport operation. It is natural to expect that this operation and its properties be encoded on the topological side by suitable properties of the range map $r$ of the corresponding ample category. However, these kinds of questions were not addressed in~\cite{CG21} and it is one of the aims of this work to consider them. 

Another purpose of this work is to extend the Cockett-Garner equivalence   to the non-unital setting of restriction {\em semigroups}. We show that appropriate generalizations of Boolean restriction monoids turn out to be {\em Boolean restriction semigroups with local units}. We also give special attention to the correspondence between various kinds of morphisms between algebraic and topological objects. The morphisms we work with at the topological side are {\em cofunctors} between ample categories and extend cofunctors between ample groupoids (see~\cite{BEM12} and also~\cite{CG21, K12}).

Finally, we aim this work to be the most tailored to a reader working with groupoid $C^*$-algebras and Steinberg algebras, and who is familiar with the constructions of an inverse semigroup from an ample groupoid and vice versa using compact bislices and germs (see Exel~\cite{Exel08} and Steinberg~\cite{St10}). That is why we work with ample topological categories and their subclasses, and do not consider a wider generality.  We anticipate, however, that our results admit extension to the level of generality of \cite{CG21} with restriction categories replaced by suitable {\em range semicategories}. 

\subsection{Structure and main results} In Section \ref{s:stone_gba} we recall the classical Stone duality between generalized Boolean algebras and locally compact Stone spaces. Then in Section \ref{s1:preliminaries} we provide the necessary background on Ehresmann, biEhresmann, restriction and birestriction semigroups and, motivated by range categories of Cockett, Guo and Hofstra \cite{CGH12, CGH12a},  introduce {\em range semigroups} which are the non-unital versions of one-object range categories. Range semigroups are simply biEhresmann  semigroups $(S,\cdot, ^*, ^+)$ such that $(S,\cdot, ^*)$ is a restriction semigroup. The prototypical example of a (Boolean) range semigroup is the semigroup ${\mathcal{PT}}(X)$ of all partial self-maps of a set $X$. In Section \ref{s:determ} we study the com\-pa\-ti\-bi\-li\-ty and the bicompatibility relations, joins, and also bideterministic elements and partial isomorphisms of \cite{CGH12, G23b} with the emphasis on the comparison between these two classes of elements in a range semigroup.  Further in Section \ref{s:brs} we define our central objects, {\em Boolean} and {\em preBoolean restriction semigroups with local units}, and several types of morphisms between them. PreBoolean semigroups are more general than Boolean ones in that the  requirement of existing of all binary compatible joins is relaxed to that of existing of joins of all pairs of elements with a common upper bound. These semigroups include not only Boolean restriction semigroups, but also Boolean birestriction semigroups which are closed under finite bicompatible joins rather than with respect to finite compatible joins. This enables us in Section \ref{s:dualities} to derive the equivalences for both Boolean restriction and Boolean birestriction semigroups from  suitable adjunctions involving preBoolean semigroups.
 
Section \ref{s:cat_to_sem} concerns definitions and first properties of \'etale and bi\'etale topological categories. 
In Section \ref{s:ample} we define ample topological categories and show that the set $C^a$ of compact slices, together with the operations of the multiplication of slices and $^*$ given by  $A^* = ud(A)$, forms a Boolean restriction semigroup with local units (see Proposition \ref{prop:Boolean}). We also look at the special cases where $r$ is open or a local homeomorphism
 and show that the  objects that arise at the algebraic side  are {\em Boolean range semigroups} and {\em \'etale Boolean range semigroups}, respectively (see Proposition \ref{prop:range} and Proposition \ref{prop:etale}). We also define several types of morphisms between ample categories and describe their algebraic counterparts. 

In Section \ref{s:sem_to_cat} we reverse the constructions of Section \ref{s:ample} and  assign to a preBoolean restriction semigroup with local units its {\em category of germs} which is inspired by Exel's tight groupoid of an inverse semigroup with zero \cite{Exel08} (which is equivalent to the groupoid of tight filters, see~\cite{BCHJL22}). In fact, a similar construction can be carried out for an arbitrary  restriction semigroup with zero which is a projection, but we postpone pursuing this theme until future work. We prove that the category of germs is an ample category (see Corollary~\ref{cor:ample_cat}) and in the special cases where $S$ is a range semigroup or an \'etale range semigroup, the map $r$ of  its category of germs is open or a local homeomorphism, respectively (see Proposition~\ref{prop:open1} and Proposition \ref{prop:lh1}). 

Finally, in Section \ref{s:dualities} we formulate and prove the adjunction and the equivalence theorems, which are the main results of this paper (see Theorems \ref{th:adjunction1},  \ref{th:eq1}, \ref{th:eq2} and \ref{th:eq3}) and are briefly outlined below. We call an ample topological category {\em strongly ample} if its range map $r$ is open and {\em biample} if $r$ is a local homeomorphism.

\begin{thm}\label{thm1} \mbox{}
\begin{enumerate}
\item There is an adjunction between the category of preBoolean restriction semigroups with local units and the category of ample categories.
\item The adjunction of part (1) restricts to the adjunction between the category of preBoolean range semigroups and the category of strongly ample categories.
\item The adjunction of part (2) restricts to the adjunction between the category of \'etale preBoolean range semigroups and the category of biample categories.
\end{enumerate}    
\end{thm}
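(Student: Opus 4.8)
The plan is to assemble the adjunction of part~(1) out of the two constructions already carried out, and then to obtain parts~(2) and~(3) by restricting it to subcategories on each side. Write $\mathsf{S}$ for the compact-slice construction $C\mapsto C^{a}$, which by Proposition~\ref{prop:Boolean} sends an ample category to a Boolean restriction semigroup with local units, and write $\mathsf{G}$ for the germ construction $S\mapsto\mathsf{G}(S)$, which by Corollary~\ref{cor:ample_cat} sends a preBoolean restriction semigroup with local units to an ample category. The first task is to promote these assignments to functors for the chosen morphisms. A homomorphism $\phi\colon S\to T$ should induce a cofunctor $\mathsf{G}(\phi)\colon\mathsf{G}(S)\to\mathsf{G}(T)$, whose object component is the pullback of characters $\widehat{\phi}\colon\widehat{E(T)}\to\widehat{E(S)}$ on the spaces of projections, and whose arrow lifting sends a germ $[s,\widehat{\phi}(y)]\mapsto[\phi(s),y]$ for each object $y$ of $\mathsf{G}(T)$; dually a cofunctor $\Phi\colon C\to D$ with object map $\Phi_{0}\colon D_{0}\to C_{0}$ should induce a homomorphism $\mathsf{S}(\Phi)\colon C^{a}\to D^{a}$ that lifts the arrows of a compact slice $A$ through the domain-fibrewise bijections of $\Phi$ to form a compact slice of $D$. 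I would verify that these assignments are continuous, respect the local-homeomorphism structure of $d$, and preserve the partial operations $({}\cdot{},{}^{*},\vee)$, using the explicit bases for the germ topology and the slice topology; the one genuinely non-formal point is that the lift of a \emph{compact} slice is again compact, which is what dictates the properness requirement built into the admissible morphisms.

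With the functors in hand I would exhibit the adjunction $\mathsf{G}\dashv\mathsf{S}$, with $\mathsf{G}$ left adjoint, by producing its unit and counit and checking the triangle identities. The unit $\eta_{S}\colon S\to\mathsf{S}(\mathsf{G}(S))=\mathsf{G}(S)^{a}$ sends $s$ to the canonical compact slice $\widehat{s}=\{[s,x] : s^{*}\in x\}$ of the germ category; Proposition~\ref{prop:Boolean} applied to $\mathsf{G}(S)$ shows that $\widehat{s}$ is a compact slice and that $s\mapsto\widehat{s}$ is a homomorphism of preBoolean restriction semigroups, which is precisely the passage from $S$ to the Boolean semigroup it generates. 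The counit $\varepsilon_{C}\colon\mathsf{G}(\mathsf{S}(C))=\mathsf{G}(C^{a})\to C$ sends a germ $[A,x]$ of a compact slice $A\in C^{a}$ to the unique arrow of $A$ lying over $x$; it is well defined because two slices agreeing near $x$ carry the same arrow there, and it is a cofunctor by the defining slice property that $d$ restricts to a homeomorphism on each $A$. The adjunction is then equivalent to the natural bijection
\[
\mathrm{Ample}\bigl(\mathsf{G}(S),C\bigr)\;\cong\;\mathrm{PreBool}\bigl(S,C^{a}\bigr),
\]
under which a cofunctor $F$ corresponds to $\mathsf{S}(F)\circ\eta_{S}$ and a homomorphism $\psi$ corresponds to $\varepsilon_{C}\circ\mathsf{G}(\psi)$; I would check these are mutually inverse by evaluating on the canonical generators $\widehat{s}$ and on germs $[A,x]$, which is exactly the pair of triangle identities $\mathsf{S}(\varepsilon_{C})\circ\eta_{C^{a}}=\id_{C^{a}}$ and $\varepsilon_{\mathsf{G}(S)}\circ\mathsf{G}(\eta_{S})=\id_{\mathsf{G}(S)}$.

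For part~(2) the restriction is essentially automatic at the level of objects: Proposition~\ref{prop:range} shows that $\mathsf{S}$ carries strongly ample categories into Boolean range semigroups, a subclass of the preBoolean range semigroups, and Proposition~\ref{prop:open1} shows that $\mathsf{G}$ carries preBoolean range semigroups into strongly ample categories, so both functors restrict to the indicated subcategories. Since the unit and counit are given by the same formulas, the only additional verification is compatibility with the cosupport operation; this follows because on a strongly ample category the openness of $r$ makes $A^{+}=ur(A)$ a compact slice, so that $\eta_{S}$ preserves ${}^{+}$ and $\varepsilon_{C}$ is a cofunctor of range categories. Part~(3) is the same argument one level higher: Proposition~\ref{prop:etale} and Proposition~\ref{prop:lh1} match the \'etale range condition on $S$ with the local-homeomorphism condition on $r$, i.e.\ biampleness, and here the slices involved are bislices, so the restricted unit and counit are morphisms of \'etale range semigroups and biample categories respectively.

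The part I expect to require the most care is the morphism bookkeeping rather than anything at the level of objects. Because we work in the preBoolean setting, joins exist only for pairs with a common upper bound, so at every stage one must check that $\mathsf{G}(\phi)$, $\mathsf{S}(\Phi)$, $\eta$ and $\varepsilon$ preserve exactly those compatible, respectively bicompatible, joins that happen to exist, rather than all binary joins; and because the topological morphisms are cofunctors, whose object component runs opposite to their nominal direction, one must track this variance carefully to see that the lifting of slices is functorial and that the two triangle identities hold on the nose. Once naturality of $\eta$ and $\varepsilon$ is confirmed on the canonical generators, the triangle identities, and hence all three adjunctions, follow formally.
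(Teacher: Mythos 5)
Your proposal is correct and follows essentially the same route as the paper: the same pair of functors (germs as left adjoint, compact slices as right adjoint), the same unit $s\mapsto\Theta(s)$ and the same counit sending the germ $[A,x]$ to the unique arrow of $A$ over $x$, verified via the triangle identities, with parts (2) and (3) obtained by restricting along Propositions \ref{prop:range}, \ref{prop:open1}, \ref{prop:etale} and \ref{prop:lh1}. The only cosmetic difference is that you phrase the adjunction through the hom-set bijection while the paper checks the two composite natural transformations are identities and invokes Mac\,Lane's criterion; these are equivalent formulations.
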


These adjunctions give rise to the following equivalences.

\begin{thm}\label{thm2} The following pairs of categories are equivalent:
\begin{enumerate}
\item the category of Boolean restriction semigroups with local units and the category of ample categories;
\item the category of Boolean range semigroups and the category of strongly ample categories;
\item  the category of \'etale Boolean range semigroups and the category biample categories;
\item the category of Boolean birestriction semigroups and the category of biample categories.
\end{enumerate}    
\end{thm}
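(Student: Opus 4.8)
The plan is to derive each equivalence by showing that the relevant adjunction of Theorem \ref{thm1} restricts to one in which both the unit and the counit are natural isomorphisms. Write $F$ for the functor sending a preBoolean restriction semigroup with local units to its category of germs, and $G$ for the functor sending an ample category to its semigroup of compact slices, so that $F \dashv G$ with unit $\eta\colon 1 \Rightarrow GF$ and counit $\epsilon\colon FG \Rightarrow 1$. Since $F$ lands among ample categories by Corollary \ref{cor:ample_cat} and $G$ lands among \emph{Boolean} (not merely preBoolean) semigroups by Proposition \ref{prop:Boolean}, it suffices to prove that $\epsilon_{\mathcal{G}}$ is an isomorphism for every ample category $\mathcal{G}$ and that $\eta_S$ is an isomorphism for every Boolean $S$. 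Granting this, the two functors restrict to subcategories they each preserve, with $\eta$ and $\epsilon$ restricting to natural isomorphisms there, and an adjunction whose unit and counit are isomorphisms is an adjoint equivalence; this yields part (1), and the refinements Theorem \ref{thm1}(2),(3) give parts (2) and (3) verbatim, using in addition that the structural properties are matched in both directions (Proposition \ref{prop:range}, Proposition \ref{prop:etale} on one side and Proposition \ref{prop:open1}, Proposition \ref{prop:lh1} on the other).

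First I would show that $\mathrm{Fix}(\epsilon)$ is the whole category of ample categories, i.e.\ that every ample $\mathcal{G}$ is reconstructed from the germs of its compact slices. Starting from $\mathcal{G}$ one forms $\mathcal{G}^a$ and then the germ category $FG(\mathcal{G})=F(\mathcal{G}^a)$, and $\epsilon_{\mathcal{G}}$ is the comparison functor back to $\mathcal{G}$. I would verify it is an isomorphism exactly as in the groupoid case: two arrows lying in a common compact slice and sharing a domain coincide, so germs of compact slices are in bijection with the arrows of $\mathcal{G}$, and ampleness makes the compact slices a basis, whence this bijection is a homeomorphism respecting $d$, $r$ and composition. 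The same verification, now reading openness of $r$ (resp.\ the local-homeomorphism property of $r$) off the corresponding property of the slice semigroup, handles the strongly ample and biample refinements.

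Next I would identify $\mathrm{Fix}(\eta)$ with the Boolean objects. The map $\eta_S$ sends $s\in S$ to the basic compact slice it determines in the germ category; it is injective because distinct elements of $S$ give distinct basic slices. Every compact slice of $GF(S)$ is a finite join of basic slices indexed by a pairwise compatible family, so $\eta_S$ is surjective precisely when all such finite compatible joins already exist in $S$, that is, precisely when $S$ is Boolean. Hence $\eta_S$ is an isomorphism if and only if $S$ is Boolean (and likewise for the range and \'etale-range refinements, the operation ${}^+$ being carried along), which completes parts (1)--(3).

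Finally, part (4) follows from part (3) once \'etale Boolean range semigroups are identified with Boolean birestriction semigroups. Here I would argue that the local-homeomorphism hypothesis on $r$ forces the cosupport operation ${}^+$ supplied by the range structure to satisfy the full birestriction axioms, making the comparison between the two classes an isomorphism of categories under which bicompatible joins correspond to the compatible joins used on the range side; composing it with the equivalence of part (3) recovers the Kudryavtseva--Lawson correspondence. I expect the principal obstacle to be the counit step: showing that an arbitrary ample category is reconstructed up to isomorphism of topological categories from the germs of its compact slices, since this is exactly where local compactness and the ample (Stone-type) separation hypotheses are needed to guarantee that the compact slices both separate the arrows and generate the topology.
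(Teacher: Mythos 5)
Your treatment of parts (1)--(3) is essentially the paper's own argument: establish the adjunction between preBoolean restriction semigroups with local units and ample categories, prove the counit $\varepsilon_C$ is always an isomorphism (reconstruction of $C$ from germs of compact slices), prove the unit $\eta_S\colon s\mapsto\Theta(s)$ is injective always and surjective exactly when $S$ is Boolean, and then pass to the fixed points, with Propositions \ref{prop:range}, \ref{prop:etale}, \ref{prop:open1} and \ref{prop:lh1} matching the range and \'etale refinements on both sides. That part is sound.

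Part (4) contains a genuine error. You propose to identify \'etale Boolean range semigroups with Boolean birestriction semigroups by arguing that the local-homeomorphism hypothesis on $r$ ``forces the cosupport operation ${}^+$ to satisfy the full birestriction axioms,'' yielding an isomorphism of categories to be composed with part (3). This is false: an \'etale Boolean range semigroup is in general \emph{not} a birestriction semigroup. The monoid ${\mathcal{PT}}_n$ is \'etale Boolean range (every partial map is a finite join of bideterministic partial bijections with singleton domains), yet it fails the corestriction identity $xy^+=(xy)^+x$ --- take $x\colon 1,2\mapsto 1$ and $y\colon 1\mapsto 2$ in ${\mathcal{PT}}_2$; then $xy^+$ is the map $2\mapsto 1$ while $(xy)^+x=x$. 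The actual relationship, which is the content of Theorems \ref{th:eq2} and \ref{th:eq3}, is that the two classes are related by the functor $S\mapsto{\mathscr{BD}}(S)$ onto the (generally proper) subalgebra of bideterministic elements; on the topological side the birestriction semigroup attached to a biample category is $\widetilde{C}^a$ (compact \emph{bislices}), not $C^a$. To close the gap you would need the paper's additional ingredients: Lemma \ref{lem:8j1} showing $\Theta(s)$ is a bislice when $s$ is bideterministic, so that $\eta_S$ for a Boolean birestriction semigroup $S$ lands in ${\mathscr{BD}}({\mathsf S}{\mathsf C}(S))=\widetilde{{\mathsf C}(S)}^a$ and is onto it (compact bislices are finite bicompatible unions of basic ones, and $S$ is closed under such joins); and Proposition \ref{prop:lh1} showing that $S$ and ${\mathscr{BD}}(S)$ have the same category of germs, so that the counit remains an isomorphism. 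There is also a mismatch of morphisms you gloss over: the equivalence for Boolean birestriction semigroups requires cofunctors whose action is injective (equivalently, algebraic morphisms preserving bideterministic elements), which is not the morphism class appearing in part (3).
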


As Proposition \ref{prop:6j3} shows, our \'etale Boolean range semigroups (which are defined as those Boolean range semigroups each of whose elements is a finite join of bideterministic elements) are a natural generalization of \'etale Boolean restriction monoids considered by Garner in \cite{G23b} (and our results imply that Garner's \'etale Boolean restriction monoids admit a compatible cosupport operation). By Proposition \ref{prop:det} a biample category $C$  gives rise to the \'etale Boolean range semigroup $C^a$ of all compact slices and to the birestriction semigroup $\widetilde{C}^a$ of all compact bislices which yields the equivalence between \'etale Boolean range semigroups and Boolean birestriction semigroups (see Theorem \ref{th:eq3}). It follows that an \'etale Boolean range semigroup can be reconstructed from its birestriction semigroup of bideterministic elements. In particular, a groupoidal \'etale Boolean range semigroup can be reconstructed from its inverse semigroup of partial isomorphisms, cf. \cite{G23b, L24}.

Part (4) of Theorem \ref{thm2} recovers the non-commutative Stone duality for Boolean birestriction semigroups given in \cite[Theorem 8.19]{KL17}. Note that the (unital version of the) latter duality {\em can not} be recovered from the results of \cite{CG21}, because while treating (generalizations of) restriction and inverse monoids, \cite{CG21} does not treat the  biunary setting at the algebraic side of the equivalences.

\section{Stone duality for generalized Boolean algebras} \label{s:stone_gba}
Here we briefly recall the duality between generalized Boolean algebras and locally compact Stone spaces \cite{D64,Stone37} (see also \cite{L23}) which we generalize in this paper to several non-commutative settings.
Recall that a {\em generalized Boolean algebra} is a relatively complemented distributive lattice with the bottom element $0$. By ${\mathbb B} = \{0,1\}$ we denote the smallest non-zero generalized Boolean algebra, with the operations $\vee$, $\wedge$ and $\setminus$ of join, meet and relative complement given in the following tables.
\vspace{2.5mm}

\begin{minipage}[c]{0.32\textwidth}
\centering
\begin{tabular}{|c||c|c|}
\hline
 $\vee$ & 0 & 1 \\
\hline  \hline
 0 & 0 & 1  \\
\hline 
 1 & 1 & 1 \\
 \hline
\end{tabular}
\end{minipage}
\begin{minipage}[c]{0.32\textwidth}
\centering
\begin{tabular}{|c||c|c|}
\hline
 $\wedge$ & 0 & 1 \\
\hline  \hline
 0 & 0 & 0  \\
\hline 
 1 & 0 & 1 \\
 \hline
\end{tabular}
\end{minipage}
\begin{minipage}[c]{0.32\textwidth}
\centering
\begin{tabular}{|c||c|c|}
\hline
 $\setminus$ & 0 & 1 \\
\hline  \hline
 0 & 0 & 0  \\
\hline 
 1 & 1 & 0 \\
 \hline
\end{tabular}
\end{minipage}

\vspace{2.5mm}

A Hausdorff space $X$ is a {\em locally compact
Stone space} if it has a basis of compact-open sets. A morphism $E\to F$ between generalized Boolean algebras is said to be {\em proper} if for every $f\in F$ there is $e\in E$ such that $\varphi(f)\geq e$. 
A continuous map  between topological spaces is called {\em proper} if the inverse images of compact sets are compact sets.

 We denote the category of generalized Boolean algebras and proper morphisms between them by ${\mathsf{GBA}}$, and by ${\mathsf{LCBS}}$ the category of locally compact Stone spaces and proper continuous maps between them.

Let $E$ be a generalized Boolean algebra. A {\em prime character} of $E$ is a non-zero morphism of generalized Boolean algebras $f\colon E\to {\mathbb B}$. 
By $\widehat{E}$ we denote the set of all prime characters of $E$.
Prime characters are in a bijection with {\em prime filters} of $E$ via the assignment 
$\varphi\mapsto \varphi^{-1}(1)$. The set $\widehat{E}$ is a locally compact Stone space with the basis of the topology given by the sets $D_e = \{f\in \widehat{E}\colon f(e)=1\}$, where $e$ runs through $E$. The assignment $E\mapsto \widehat{E}$ gives rise to a contravariant functor ${\mathsf{F}}\colon {\mathsf{GBA}} \to {\mathsf{LCBS}}$ which acts on morphism by taking $f\colon E\to F$ to $f^{-1}\colon \widehat{F}\to \widehat{E}$, where $f^{-1}(\varphi) = \varphi\circ f$.

Let $X$ be a locally compact Stone space. By $\widehat{X}$ we denote the generalized Boolean algebra of all compact-open sets of $X$. The assignment $X\mapsto \widehat{X}$ gives rise to a contravariant functor ${\mathsf G}\colon {\mathsf{LCBS}} \to {\mathsf{GBA}}$ which takes $f\colon X\to Y$ to $f^{-1}\colon \widehat{Y}\to \widehat{X}$.

\begin{theorem} \cite{D64, Stone37} (Stone duality for generalized Boolean algebras) \label{th:Stone_classical}
The functors ${\mathsf{F}}$
and ${\mathsf G}$ establish a dual equivalence between the categories ${\mathsf{GBA}}$ and ${\mathsf{LCBS}}$. The natural isomorphisms $\eta\colon 1_{{\mathsf{GBA}}} \to {\mathsf G}{\mathsf F}$ and $\varepsilon \colon {\mathsf F}{\mathsf G} \to  1_{{\mathsf{LCBS}}} $ are given by:
\begin{itemize}
\item $\eta_E(a) = \{\varphi\in \widehat{E}\colon \varphi(a)=1\}$, where $E$ is an object of ${\mathsf{GBA}}$ and $a\in E$.
\item $\varepsilon_X(\varphi_x) = x$, where $\varphi_x\in {\mathsf F}{\mathsf G}(X)$ is defined by:
$$
\text{ for all } A\in \widehat{X}: \varphi_x(A) = 1 \Leftrightarrow x\in A,
$$ where $X$ is an object of ${\mathsf{LCBS}}$ and $x\in X$.
\end{itemize}
\end{theorem}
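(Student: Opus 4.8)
The plan is to show that the two explicitly described natural transformations $\eta$ and $\varepsilon$ are isomorphisms, since the functoriality of ${\mathsf F}$ and ${\mathsf G}$ and the fact that they land in the stated categories has essentially been set up in the preceding discussion (the properness conditions on morphisms being exactly what is needed for this). Thus the work splits into three tasks: proving that each $\eta_E$ is an isomorphism of generalized Boolean algebras, proving that each $\varepsilon_X$ is a homeomorphism, and verifying the two naturality squares.

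First I would treat $\eta_E\colon E\to {\mathsf G}{\mathsf F}(E)$, which sends $a\in E$ to the basic compact-open set $D_a=\{\varphi\in\widehat{E}\colon \varphi(a)=1\}$. That $\eta_E$ preserves $\vee$, $\wedge$, $\setminus$ and $0$ is immediate from the fact that each prime character $\varphi$ is itself a homomorphism into ${\mathbb B}$; for instance $D_{a\vee b}=D_a\cup D_b$ because $\varphi(a\vee b)=\varphi(a)\vee\varphi(b)$. Surjectivity follows because every compact-open subset of $\widehat{E}$ is a union of basic sets $D_a$, hence a \emph{finite} union by compactness, and a finite union $D_{a_1}\cup\dots\cup D_{a_n}$ equals $D_{a_1\vee\dots\vee a_n}$. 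Injectivity is the only point requiring genuine input: if $a\neq b$ then, say, $a\setminus b\neq 0$, and I would invoke the prime filter (equivalently prime ideal) theorem for the relatively complemented distributive lattice $E$ to produce a prime character $\varphi$ with $\varphi(a\setminus b)=1$, whence $\varphi(a)=1$, $\varphi(b)=0$, so that $\varphi$ separates $a$ from $b$ and $D_a\neq D_b$.

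Next I would show that $\varepsilon_X\colon {\mathsf F}{\mathsf G}(X)\to X$, $\varphi_x\mapsto x$, is a homeomorphism. Injectivity uses that $X$ is Hausdorff with a basis of compact-open sets, so distinct points are separated by some compact-open $A$, giving $\varphi_x\neq\varphi_y$. Continuity and openness reduce to the identities $\varepsilon_X^{-1}(A)=D_A$ for $A\in\widehat{X}$, which match the two bases against each other. The crux, and the step I expect to be the main obstacle, is surjectivity: every prime character $f\colon\widehat{X}\to{\mathbb B}$ must be of the form $\varphi_x$. Here I would pass to the prime filter ${\mathcal F}=f^{-1}(1)$ of compact-open sets. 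Since $X$ is Hausdorff, each compact-open set is in fact clopen, so the members of ${\mathcal F}$ form a family of closed subsets with the finite intersection property sitting inside the compact set given by any fixed $A_0\in{\mathcal F}$; compactness then yields a point $x\in\bigcap_{A\in{\mathcal F}}A$. One then checks, using primeness of $f$, that $A\in{\mathcal F}$ if and only if $x\in A$, so $f=\varphi_x$, while Hausdorffness gives uniqueness of $x$.

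Finally I would verify naturality: for a proper morphism $f\colon E\to F$ in ${\mathsf{GBA}}$ one checks that $\eta_F\circ f={\mathsf G}{\mathsf F}(f)\circ\eta_E$ by tracing an element $a$ through both composites and comparing the resulting compact-open sets via the definition $f^{-1}(\varphi)=\varphi\circ f$; the square for $\varepsilon$ is dual and is handled the same way on points. Combined with the three isomorphism statements above, this establishes that ${\mathsf F}$ and ${\mathsf G}$ form a dual equivalence between ${\mathsf{GBA}}$ and ${\mathsf{LCBS}}$ with the asserted unit and counit.
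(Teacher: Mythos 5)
Your proposal is correct and follows essentially the same route as the paper's sketch: $\eta_E$ is surjective because every compact-open subset of $\widehat{E}$ is a finite union of basic sets $D_e$ and injective by separating distinct elements with a prime character, while $\varepsilon_X$ is handled via Hausdorff separation and the finite intersection property applied to the prime filter of compact-open sets. The only (harmless) differences are that you invoke the prime filter theorem directly where the paper cites an ultrafilter separation result, and you deduce $f=\varphi_x$ from primeness where the paper uses maximality of $\varphi_x^{-1}(1)$; incidentally, your placement of the compactness argument inside a fixed member of the filter is the correct reading of the paper's slightly misstated ``since $X$ is compact''.
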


Since the proofs of our main results in Section~\ref{s:dualities} rely on~Theorem \ref{th:Stone_classical}, for the sake of completeness, we briefly sketch the arguments why $\eta_E$ and $\varepsilon_X$  are isomorphisms. The definition of the sets $D_e$ yields that $D_{e\vee f} = D_e\cup D_f$ and this extends to any finite non-empty joins. If $A$ is a compact-open set of $\widehat{E}$, it is a finite union of sets of the form $D_e$, so that $A = \cup_{i=1}^nD_{e_i} = D_{\vee_{i=1}^n}e_i$, and thus $\eta_E$ is surjective. If $e\neq f$ are elements of $E$, there is an ultrafilter of $E$ which contains $a$ and avoids $b$  (see \cite[Corollary~2]{L23}). But ultrafilters of generalized Boolean algebras coincide with prime filters (see \cite[Proposition~1.6]{LL13}), so that $D_e\neq D_f$. It follows that $\eta_E$ is a bijection. To show that $\varepsilon_X$ is well defined and bijective, we first note that it is routine to check that $\varphi_x$ is indeed a prime character of $\widehat{X}$. To show that the map $x\mapsto \varphi_x$ is a bijection (and then $\varepsilon_X$ is its inverse bjection), suppose first that $x\neq y$. Because $X$ is Hausdorff, there is a neighbourhood $U$ of $x$ which avoids $y$. Since compact-open sets form a basis of the topology on $X$, there is $A\in \widehat{X}$ such that $A\subseteq U$ and $x\in A$. It follows that $\varphi_x(A)=1$ and $\varphi_y(A)=0$, so that $\varphi_x\neq \varphi_y$. Let $\psi \in {\mathsf F}{\mathsf G}(X)$. Then $\psi^{-1}(1)$ is a prime filter of $\widehat{X}$. Since $X$ is compact, the finite intersection property implies that $\psi^{-1}(1)$ has a non-empty intersection. So there is $x\in X$ such that $\psi^{-1}(1) \supseteq \varphi_x^{-1}(1)$. Since prime filters coincide with ultrafilters (see \cite[Proposition~1.6]{LL13}), maximality of $\varphi_x^{-1}(1)$ implies that $\psi^{-1}(1) = \varphi_x^{-1}(1)$, so that $\psi = \varphi_x$. For a more detailed account we refer the reader to \cite[Proposition~3, Theorem~6]{L23}.

\section{Restriction, birestriction and range semigroups}\label{s1:preliminaries}
\subsection{Ehresmann, biEhresmann, restriction and birestriction semigroups} 
Ehresmann, biEhresmann, restriction and birestriction semigroups are the main objects of study of this paper and appear on the algebraic side of our dualities. They are a subject of active investigations, see  \cite{BGG15,DKK21,EG21,Kam11,K19,KL23,S17,S18} and references therein, and for a survey we refer the reader to  \cite{G10}.

In this paper, algebras are meant in the sense of universal algebra, so an {\em algebra} $A$ is an ordered pair $(A; F)$ where $A$ is a non-empty set and $F$ a collection of finitary operations on $A$, see \cite[Definition 1.1]{McKMcNT87}. If $(A; f_1, \dots, f_n)$ is an algebra, by its {\em signature} we mean the ordered $n$-tuple $(f_1, \dots, f_n)$ and by its {\em type} the ordered $n$-tuple $(\rho(f_1),\dots, \rho(f_n))$ where $\rho(f_i)$ is the arity of the operation $f_i$ for each $i\in \{1,\cdots, n\}$. If $(A; f_1, \dots, f_n)$ is an algebra of type $(\rho(f_1),\cdots, \rho(f_n))$, we say that it is a $(\rho(f_1),\dots, \rho(f_n))$-algebra. Furthermore, if this does not cause ambiguity, we often denote $(A; f_1, \dots, f_n)$ simply by $A$.

\begin{definition} [Ehresmann, coEhresmann and biEhresmann semigroups] An {\em Ehresmann semigroup}\footnote{Ehresmann and coEhresmann semigroups appear in the literature as {\em right Ehresmann semigroups} and {\em left Ehresmann semigroups}.} is an algebra $(S; \cdot \,, ^*)$, where $(S;\cdot)$ is a semigroup, $^*$ is a unary operation on $S$ such that the following identities hold:
\begin{equation}\label{eq:axioms_star}
xx^*=x, \qquad x^*y^*=y^*x^* = (x^*y^*)^*, \qquad (xy)^*=(x^*y)^*.
\end{equation}

Dually, a {\em coEhresmann semigroup} is an algebra $(S; \cdot \,, ^+)$, where $(S;\cdot)$ is a semigroup, $^+$ is a unary operation on $S$ such that the following identities hold:
\begin{equation}\label{eq:axioms_plus}
x^+x=x, \qquad x^+y^+=y^+x^+=(x^+y^+)^+, \qquad (xy)^+=(xy^+)^+.
\end{equation}

A {\em biEhresmann semigroup}\footnote{BiEhresmann semigroups appear in the literature as {\em two-sided Ehresmann semigroups} or as {\em Ehresmann semigroups}.} is an algebra $(S; \cdot\, , ^*, ^+)$, where $(S;\cdot\, , ^*)$ is an Ehresmann semigroup, $(S;\cdot\, , ^+)$ is a  coEhresmann semigroup and the operations $^*$ and $^+$ are connected by the following identities:
\begin{equation}\label{eq:axioms_common}
(x^+)^*=x^+,\qquad (x^*)^+=x^*.
\end{equation}
\end{definition}

The identities $x^* = x^*x^*$ and $(x^*)^* = x^*$ follow from \eqref{eq:axioms_star}, and their dual identities $x^+ = x^+x^+$ and $(x^+)^+ = x^+$  follow from  \eqref{eq:axioms_plus}. The operation $^*$ will be sometimes referred to as the {\em support operation}, and the operation $^+$ as the {\em cosupport operation}.

\begin{definition}[Restriction, corestriction and birestriction semigroups] 
A {\em restriction semigroup}\footnote{Restriction and corestriction semigroups appear in the literature  as {\em right  restriction semigroups} (or {\em weakly right ample semigroups}) and {\em left restriction semigroups} (or {\em weakly left ample semigroups}), respectively.} is an algebra $(S; \cdot \,, ^*)$ which is an Ehresmann semigroup and in addition satisfies the identity
\begin{equation}\label{eq:axioms_star_restr}
x^*y = y(xy)^*.
\end{equation}
Dually, a {\em corestriction semigroup} is an algebra $(S; \cdot \,, ^+)$ which is a coEhresmann semigroup and satisfies the identity
\begin{equation}\label{eq:axioms_plus_restr}
xy^+ = (xy)^+x.
\end{equation}
A {\em birestriction semigroup}\footnote{Just as with biEhresmann semigroups, birestriction semigroups appear in the literature  as {\em two-sided restriction semigroups} or as  {\em restriction semigroups}.} is an algebra $(S; \cdot\, , ^*, ^+)$, 
where $(S;\cdot\, , ^*)$ is a restriction semigroup, $(S;\cdot\, , ^+)$ is a  corestriction semigroup and the operations $^*$ and $^+$ are connected by the  identities \eqref{eq:axioms_common}.
\end{definition}

If these algebras possess an identity element, the word `semigroup' in their name is replaced by the word `monoid', e.g., a restriction monoid is a restriction semigroup which has an identity element. We denote the identity element of a monoid by $1$. Note that in an Ehresmann monoid we have $1^*=1$. Dually, $1^+=1$ holds in a coEhresmann monoid.

\begin{remark}
In the category theory literature multi-object generalizations of Ehresmann, coEhresmann, biEhresmann, restriction, corestriction and birestriction monoids are known as {\em support}, {\em cosupport}, {\em bisupport}, {\em restriction}, {\em corestriction} and {\em birestriction categories}, respectively  (see, e.g., \cite{CG21,  CGH12, CL02, CL24, HL21}). Furthermore, the support and the cosupport of an element $s$ are often denoted by $\bar{s}$ and $\hat{s}$, respectively.  \end{remark}

Recall that a semigroup $S$ is called an {\em inverse semigroup} \cite{Lawson_book} if for each $a\in S$ there exists a unique
$b\in S$ such that $aba=a$ and $bab=b$. The element $b$ is called the {\em inverse} of $a$ and is denoted by $a^{-1}$. 
If $(S; \cdot)$ is an inverse semigroup and $a\in S$, putting $a^+=aa^{-1}$ and $a^*=a^{-1}a$ makes $(S; \cdot, ^+, ^*)$ a birestriction semigroup (and thus also a biEhresmann semigroup). Multi-object generalizations of inverse monoids are known as {\em inverse categories}, see \cite{CL02, CH23, DWP18}.
	
Ehresmann and coEhresmann semigroups are usually considered as $(2,1)$-algebras, and biEhresmann semigroups as $(2,1,1)$-algebras. 
Morphisms and subalgebras of all such algebras are taken with respect to their signatures. To emphasize this, we sometimes refer to morphisms of, e.g., biEhresmann semigroups as $(2,1,1)$-morphisms.
		
Let $S$ be an Ehresmann semigroup. The set 
$$
P(S) = \{s^*\colon s\in S\}
$$
is closed with respect to the multiplication and is a semilattice. It is called 
the {\em projection semilattice} of $S$ and its elements are called {\em projections}. 
Dually, the projection semilattice of a  coEhresmann semigroup $S$ is
defined as 
$$
P(S)=\{s^+\colon s\in S\}.
$$
If $S$ is biEhresmann semigroup, we have $P(S)=\{s^+\colon s\in S\} = \{s^*\colon s\in S\}$.

It follows from \eqref{eq:axioms_star} that the semilattice $P(S)$ of an Ehresmann semigroup $S$ can be equivalently defined as the set of all $s\in S$ satisfying $s^*=s$, and dually for a coEhresmann semigroup. Furthermore, if $s\in P(S)$ then $s$ is an idempotent. Analogues of projections in restriction and support categories are called {\em restriction idempotents}, see \cite[page 418]{CGH12}.

We point out the following useful identities which easily follow from the definitions.

\begin{enumerate}
\item[1.] Let $S$ be an Ehresmann semigroup. Then
\begin{equation}\label{eq:rule1r}
(se)^* = s^*e \text{ for all }  s\in S \text{ and } e\in P(S).
\end{equation}
\item[2.] Let $S$ be a coEhresmann semigroup. Then
\begin{equation}\label{eq:rule1l}
(es)^+ = es^+ \text{ for all }  s\in S \text{ and } e\in P(S).
\end{equation}
\item[3.] Let $S$ be a  restriction semigroup. Then
\begin{equation}\label{eq:ample_r}
es = s(es)^* \text{ for all } s\in S \text{ and } e\in P(S).  
\end{equation}
\item[4.] Let $S$ be a  corestriction semigroup. Then
\begin{equation}\label{eq:ample_l}
se = (se)^+s \text{ for all } s\in S \text{ and } e\in P(S).
\end{equation}
\end{enumerate}

\begin{definition} [Natural partial orders] \mbox{}
 For elements $s,t$ of an Ehresmann semigroup $S$ we say $s\leq t$ if there is $e\in P(S)$ such that $s=te$ or, equivalently, if $s = ts^*$.
 Dually, for elements $s,t$ of a coEhresmann semigroup $S$ we say $s\leq' t$ if there is $e\in P(S)$ such that $s=et$ or, equivalently, if $s = s^+t$. The relations $\leq$ and $\leq'$ are partial orders called the {\em natural partial orders.} 
\end{definition}

\begin{remark}
If $S$ is an Ehresmann semigroup, one can define $s\leq' t$ if there is $e\in P(S)$ such that $s=et$. However, in the absence of the operation $^+$, this relation does not need to be reflexive or antisymmetric. That is why we consider the relation $\leq'$ only in coEhresmann semigroups, where it is a partial order.
\end{remark}

A biEhresmann semigroup $S$ is equipped with both of the natural partial orders and their restriction to $P(S)$ is the underlying partial order of the semilattice $P(S)$ given by $e\leq f$ if and only if $e=ef=fe$. If $S$ is a birestriction semigroup, the partial orders $\leq$ and $\leq'$ coincide.
The following fact will be used throughout, possibly without further mention.

 \begin{lemma} \label{lem:23} Let $S$ be an Ehresmann semigroup and $a,b\in S$. Then $a\leq b$ implies $a^*\leq b^*$. A dual statement holds in  coEhresmann semigroups.
 \end{lemma}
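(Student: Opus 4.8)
The plan is to exploit the second, equivalent description of the natural partial order rather than the witness form. Recall that $a\leq b$ means precisely $a = ba^*$, with $a^*\in P(S)$. Applying the support operation to both sides and simplifying with the identity \eqref{eq:rule1r} should collapse the whole argument into a one-line computation, so I would organize the proof around that identity.

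Concretely, first I would record that $a\leq b$ is equivalent to $a = ba^*$, as in the definition of $\leq$. Then I would apply $^*$ to this equation and invoke \eqref{eq:rule1r}, namely $(se)^* = s^*e$ for $s\in S$ and $e\in P(S)$, taking $s = b$ and $e = a^*$. This yields $a^* = (ba^*)^* = b^*a^*$.

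Finally, I would read the resulting identity $a^* = b^*a^*$ back through the order. Since $a^*$ is a projection we have $(a^*)^* = a^*$, so the defining condition for $a^*\leq b^*$, namely $a^* = b^*(a^*)^* = b^*a^*$, is exactly what has just been derived; hence $a^*\leq b^*$. The dual statement for coEhresmann semigroups follows by the evident left--right symmetry: starting from $a\leq' b$, that is $a = a^+b$, applying $^+$ and using \eqref{eq:rule1l} gives $a^+ = a^+b^+$, which is the defining condition for $a^+\leq' b^+$.

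I do not anticipate any genuine obstacle, as the lemma is an immediate consequence of \eqref{eq:rule1r}. The only point that requires a little care is to work with the form $a = ba^*$ and apply \eqref{eq:rule1r} with the specific projection $e = a^*$, rather than attempting to push an unspecified witness $e$ satisfying $a = be$ through the computation; choosing the canonical witness keeps the argument self-contained and avoids having to track how $e$ interacts with $^*$.
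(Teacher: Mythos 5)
Your proof is correct and follows essentially the same route as the paper: from $a=ba^*$ one applies $^*$ to get $a^*=(ba^*)^*=b^*a^*\leq b^*$, the only cosmetic difference being that you cite the derived identity \eqref{eq:rule1r} where the paper computes directly from the axioms \eqref{eq:axioms_star}. The dual argument is likewise the intended one.
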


\begin{proof}
If $a=ba^*$ then applying \eqref{eq:axioms_star} we have $a^*=(ba^*)^*=(b^*a^*)^*=b^*a^*\leq b^*$.   
\end{proof}

\begin{lemma}\label{lem:1} Let $S$ be a restriction semigroup. 
\begin{enumerate}
\item If $e,f\in P(S)$, $s\in S$ and $e\leq f$ then $(es)^* \leq (fs)^*$.
\item  If $s\leq t$ and $u\in S$ then $su \leq tu$ and $us \leq ut$.
\end{enumerate}
Dual statements hold for corestriction semigroups and the partial order $\leq'$.
\end{lemma}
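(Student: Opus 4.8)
The plan is to prove part (2) first and then obtain part (1) as a consequence of it together with Lemma \ref{lem:23}. Throughout I will use the characterization of the natural partial order, namely $s\leq t$ iff $s=te$ for some $e\in P(S)$, the ample identity \eqref{eq:ample_r}, and the fact (recorded in the text) that $\leq$ restricts to the semilattice order on $P(S)$.

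For part (2), consider first the left multiplication $us\leq ut$. Writing $s=te$ with $e\in P(S)$ and multiplying on the left by $u$ gives $us=(ut)e$; since $e\in P(S)$, this exhibits $us$ as $ut$ times a projection, so $us\leq ut$ by definition. For the right multiplication $su\leq tu$, I again write $s=te$ and compute $su=teu$. The one point needing care is that the projection $e$ now sits \emph{between} $t$ and $u$ rather than at the right end, so it cannot be read off directly; this is the main obstacle, and it is precisely what the ample identity \eqref{eq:ample_r} resolves. Applying \eqref{eq:ample_r} to $eu$ yields $eu=u(eu)^*$, and substituting gives $su=tu(eu)^*$. As $(eu)^*\in P(S)$, this witnesses $su\leq tu$.

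For part (1), since $e,f\in P(S)$ and $\leq$ restricts to the semilattice order on $P(S)$, the hypothesis $e\leq f$ is exactly $e\leq f$ in the natural partial order. Applying the right-multiplication case of part (2) with $u=s$ then gives $es\leq fs$, and Lemma \ref{lem:23} upgrades this to $(es)^*\leq (fs)^*$, as required. Finally, the dual statements for corestriction semigroups and the partial order $\leq'$ follow by left--right symmetry, replacing \eqref{eq:ample_r} by \eqref{eq:ample_l} and arguing in the opposite semigroup; no new ideas are needed.
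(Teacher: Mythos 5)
Your proof is correct. For part (2) you argue exactly as the paper does: writing $s=te$ (the paper takes the canonical choice $e=s^*$), the case $us\leq ut$ is immediate from associativity, and the case $su\leq tu$ is handled by pushing the projection past $u$ via \eqref{eq:ample_r}, giving $su=tu(eu)^*$. Where you diverge is part (1): the paper proves it by a direct one-line computation in the projection semilattice, $(es)^*(fs)^* = (es(fs)^*)^* = (efs)^* = (es)^*$ (using \eqref{eq:rule1r} and \eqref{eq:ample_r}), whereas you first establish $es\leq fs$ from part (2) and then invoke Lemma \ref{lem:23} to pass to supports. Both routes are valid and of comparable length; your reorganization makes (1) a formal consequence of (2) and the monotonicity of $^*$, at the cost of reordering the parts and relying on the (correct, and noted in the text) identification of the natural partial order with the semilattice order on $P(S)$, while the paper's computation is self-contained and does not need Lemma \ref{lem:23}. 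Your remark on the dual statements is also fine.
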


\begin{proof} (1) Using \eqref{eq:rule1r} and \eqref{eq:ample_r}, we have $(es)^*(fs)^* = (es(fs)^*)^* = (efs)^* = (es)^*$.

(2) Since $s = ts^*$, we have $su = ts^*u = tu(s^*u)^* \leq tu$ and $us = uts^* \leq ut$.
\end{proof}

\subsection{Range semigroups}
Suppose that $(S, \cdot)$ is a semigroup and $^*$, $^+$ are unary operations on $S$ such that $(S, \cdot, ^*)$ is an Ehresmann semigroup
and $(S, \cdot, ^+)$ is coEhresmann semigroup. We say that the operations $^*$ and $^+$ are {\em compatible} provided that 
\eqref{eq:axioms_common} holds, that is, if 
$(S, \cdot, ^*, ^+)$ is a biEhresmann semigroup. We say that an Ehresmann semigroup $(S, \cdot, ^*)$ {\em admits a compatible coEhresmann structure $(S, \cdot, ^+)$} if there is a unary operation $^+$ on $S$ such that  $(S, \cdot, ^*, ^+)$ is a biEhresmann semigroup.

Inspired by the notion of a range category of \cite{CGH12}, we make the following definition, which is central for our paper.

\begin{definition} (Range semigroups)
By a {\em range semigroup} we mean a biEhresmann semigroup $(S, \cdot, ^*, ^+)$ such that $(S, \cdot, ^*)$ is a restriction semigroup.
\end{definition}

In other words, range semigroups are precisely those restriction semigroups which admit a compatible coEhresmann structure (with a compatible cosupport operation added to the signature). The multi-object generalization of range semigroups should be naturally termed {\em range semicategories}. The next statement follows from \cite[Subsection 2.11]{CGH12}, but we provide it with a proof, for completeness. 

\begin{proposition}
Let $(S,\cdot,^*)$ be an Ehresmann semigroup. If it admits a compatible  coEhresmann structure then the unary operation $^+$, which makes $(S,\cdot,^*,^+)$ a biEhresmann semigroup, is unique.
\end{proposition}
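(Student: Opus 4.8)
The plan is to show that once the Ehresmann structure $(S,\cdot,{}^*)$ is fixed, any compatible cosupport operation ${}^+$ is forced by an order-theoretic characterization that refers only to the multiplication and to the projection semilattice $P(S)$, the latter being intrinsic to $(S,\cdot,{}^*)$. So I would suppose that ${}^+$ and ${}^\dagger$ are two unary operations, each making $(S,\cdot,{}^*,{}^+)$ and $(S,\cdot,{}^*,{}^\dagger)$ a biEhresmann semigroup, and prove $x^+=x^\dagger$ for every $x\in S$ by exhibiting a description of the cosupport that is manifestly independent of the chosen operation. Crucially, this argument uses no restriction or corestriction identity, so it applies to arbitrary Ehresmann semigroups as stated.

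First I would pin down the projection semilattice. As noted in the excerpt (a consequence of \eqref{eq:axioms_star}), $P(S)=\{s\in S:s^*=s\}$, so $P(S)$ as a subset of $S$ is determined by ${}^*$ alone. For a biEhresmann structure the connecting identities \eqref{eq:axioms_common} give $(x^+)^*=x^+$ and $(x^*)^+=x^*$, whence $\{s:s^+=s\}=\{s:s^*=s\}=P(S)$. Thus both ${}^+$ and ${}^\dagger$ have the very same projection semilattice, namely $P(S)$, and the same underlying order $e\le f\iff e=ef=fe$ coming from the fixed multiplication.

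The key step is the characterization: for $x\in S$, the element $x^+$ is the least projection $e\in P(S)$ satisfying $ex=x$. On one hand $x^+\in P(S)$ and $x^+x=x$ by \eqref{eq:axioms_plus}, so $x^+$ lies in the set $\{e\in P(S):ex=x\}$. On the other hand, if $e\in P(S)$ and $ex=x$, then applying \eqref{eq:rule1l} in the coEhresmann semigroup $(S,\cdot,{}^+)$ gives $x^+=(ex)^+=e\,x^+$; since $e$ and $x^+$ are commuting idempotents this reads $x^+=e\,x^+=x^+e$, i.e.\ $x^+\le e$. Hence $x^+$ is the minimum of $\{e\in P(S):ex=x\}$. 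This description uses only the multiplication and $P(S)$, both common to the two structures, so applying it separately to ${}^+$ and to ${}^\dagger$ shows that $x^+$ and $x^\dagger$ both equal this unique minimum; therefore $x^+=x^\dagger$.

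The only real obstacle is to make sure the argument never secretly depends on the particular cosupport: the subtlety is that $P(S)$ and its order must be seen to be intrinsic to $(S,\cdot,{}^*)$ \emph{before} the minimality characterization is invoked, which is exactly what the connecting identities \eqref{eq:axioms_common} guarantee. Once that is settled, the remainder is the short computation with \eqref{eq:axioms_plus} and \eqref{eq:rule1l} above, and uniqueness follows from the fact that a minimum in a poset, when it exists, is unique.
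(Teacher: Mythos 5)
Your proof is correct and is essentially the paper's argument in a lightly repackaged form: the paper takes the two candidate operations $^+$ and $^{\oplus}$, uses \eqref{eq:axioms_plus} and \eqref{eq:rule1l} to get $s^+=(s^{\oplus}s)^+=s^{\oplus}s^+$ and symmetrically $s^{\oplus}=s^+s^{\oplus}$, and concludes by commutativity of projections — which is exactly your minimality characterization of $x^+$ instantiated at $e=s^{\oplus}$ (and the paper itself states this characterization later, in the proof of Lemma~\ref{lem:6j2}(1)). Your explicit observation that $P(S)$ and its order are intrinsic to $(S,\cdot,{}^*)$ via \eqref{eq:axioms_common} is the same identification $P(S)=\{s^+\colon s\in S\}=\{s^{\oplus}\colon s\in S\}$ made in the paper's proof, so the two arguments coincide in all essentials.
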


\begin{proof}
 Suppose that $^+$ and $^{\oplus}$ are unary operations on $S$ such that $(S,\cdot,^*,^+)$ and    $(S,\cdot,^*,^{\oplus})$ are biEhresmann semigroups. Let $s\in S$. Using \eqref{eq:axioms_plus} and \eqref{eq:rule1l}, we have $s^+ = (s^{\oplus}s)^+ = s^{\oplus}s^+$ and similarly $s^{\oplus} = s^+s^{\oplus}$.
Observe that $P(S)=\{s^*\colon s\in S\} = \{s^{+}\colon s\in S\} = \{s^{\oplus}\colon s\in S\}$. Since projections commute, we have that $s^{\oplus}s^+ = s^+s^{\oplus}$. Hence $s^+=s^{\oplus}$.
\end{proof}

\begin{remark} In \cite[Proposition 2.13]{CGH12} restriction categories which admit the structure of a range category are characterized, which implies that restriction semigroups which admit a compatible coEhresmann structure are precisely those all whose elements are {\em open} in the sense of \cite{CGH12}. This notion is defined by means of the Frobenius condition and resembles the definition of an open locale map. Our Proposition \ref{prop:open1} and Proposition \ref{prop:range}(1) are somewhat parallel to this, but involve openness of the range map $r$ of a topological category. The precise connection between the two approaches needs further investigation.
\end{remark}

\begin{example} \label{ex:1} Let $X$ be a set.
\begin{enumerate}
\item The semigroup ${\mathcal{B}}(X)$ of all binary relations on $X$ (with the multiplication of relations from the right to the left) is biEhresmann if one defines
$$
\rho^* = \{(x,x)\colon (y,x)\in \rho \text{ for some } y\in X\}
$$ and
$$
\rho^+ = \{(x,x)\colon (x,y)\in \rho \text{ for some } y\in X\}
$$
to be the support and cosupport relations of $\rho$.
If $|X|\geq 2$, ${\mathcal{B}}(X)$ is neither restriction nor corestriction.
\item The $(2,1,1)$-subalgebra ${\mathcal{PT}}(X)$ of ${\mathcal{B}}(X)$ consists of all partial self-maps of $X$ (by a partial self-map of $X$ we mean a map $Y\to X$ where $Y\subseteq X$ and $Y$ is the {\em domain} of the partial map) is restriction, so it is a range semigroup, while it is not corestriction (if $|X|\geq 2$). Since ${\mathcal{PT}}(X)$ has the identity element, it is a range monoid. 
\item The $(2,1,1)$-subalgebra ${\mathcal I}(X)$ of ${\mathcal{B}}(X)$ consists of all partial injective self-maps of $X$. Since it an inverse monoid, it is  also a birestriction monoid.
\end{enumerate} 
\end{example}
If $X=\{1,2,\dots, n\}$ we write ${\mathcal B}_n$, ${\mathcal{PT}}_n$ and ${\mathcal{I}}_n$ for ${\mathcal B}(X)$, ${\mathcal{PT}}(X)$ and ${\mathcal{I}}(X)$, respectively. 

\begin{example} Let ${\mathbb{X}}$ be a support category (resp. a bisupport category, a range category, a restriction category, a birestriction category or an inverse category), see \cite{CGH12, HL21} for definitions. Then for any object $X$ of ${\mathbb{X}}$ the set ${\mathbb{X}}(X,X)$ of morphism from $X$ to itself forms an Ehresmann monoid (resp. a biEhresmann monoid, a range monoid, a restriction monoid, a birestriction monoid or an inverse monoid). In particular, we have the following.
\begin{enumerate}
\item If ${\mathbb X}$ is the bisupport category of sets and relations between them, then ${\mathbb X}(X,X)$ is isomorphic to the biEhresmann monoid ${\mathcal{B}}(X)$ from Example \ref{ex:1}(1).
\item If ${\mathbb X}$ is the range category of sets and partial maps between them, then ${\mathbb X}(X,X)$ is isomorphic to the range monoid ${\mathcal{PT}}(X)$ from Example \ref{ex:1}(2).
\item If ${\mathbb X}$ is the inverse category of sets and partial injective maps between them, then ${\mathbb X}(X,X)$ is isomorphic to the inverse monoid ${\mathcal{I}}(X)$ from Example \ref{ex:1}(3).
\end{enumerate} 
\end{example}

Further examples can be found, for instance, in \cite[Subsection 2.2]{CGH12}.

\section{Deterministic elements, the compatibility relations and joins} \label{s:determ}
\subsection{Deterministic, codeterministic and bideterministic elements}
The following definition is inspired by \cite{CGH12} where deterministic elements were defined in the context of support categories. 

\begin{definition} \label{def:determ} (Deterministic, codeterministic and bideterministic elements)
Let $S$ be an Ehresmann semigroup. An element $a\in S$ will be called {\em deterministic} if $ea = a(ea)^*$, for all $e\in P(S)$. Codeterministic elements in coEhresmann semigroups are defined dually. An element $s$ of a biEhresmann semigroup will be called {\em bideterministic} if it is both deterministic and codeterministic.
\end{definition}

It is immediate from \eqref{eq:ample_r} that in a restriction semigroup all the elements are deterministic, and dually. 

\begin{lemma}\label{lem:det}
Let $S$ be a biEhresmann semigroup.
An element $s\in S$ is deterministic if and only if $t\leq' s$ implies that $t\leq s$, for all $t\in S$. A dual statement holds for codeterministic elements. Consequently, an element $s$ is bideterministic if and only if
$$
 t\leq s \Longleftrightarrow t\leq' s, \text{ for all } t\in S.  
$$
\end{lemma}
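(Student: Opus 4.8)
The statement to prove is Lemma~\ref{lem:det}: in a biEhresmann semigroup $S$, an element $s$ is deterministic if and only if $t\leq' s$ implies $t\leq s$ for all $t\in S$, with a dual statement for codeterministic elements and the bideterministic consequence.

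The plan is to unpack both sides directly from the definitions and the identities collected earlier. Recall that $s$ is deterministic means $es = s(es)^*$ for all $e\in P(S)$, that $t\leq s$ means $t=se$ for some $e\in P(S)$ (equivalently $t=ts^*$, i.e.\ $t = st^*$), and that $t\leq' s$ means $t=fs$ for some $f\in P(S)$ (equivalently $t=t^+s$). So I would first establish the forward direction: assume $s$ is deterministic and suppose $t\leq' s$, say $t=es$ with $e\in P(S)$. Applying the deterministic identity gives $t = es = s(es)^* = st^*$, and since $t^*\in P(S)$ this exhibits $t\leq s$. This direction is essentially immediate.

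For the converse, assume that $t\leq' s\Rightarrow t\leq s$ for all $t$, and I want to show $es=s(es)^*$ for every $e\in P(S)$. The natural candidate is $t:=es$. Since $e\in P(S)$, we have $t=es\leq' s$ by definition of $\leq'$, so by hypothesis $t\leq s$, which means $t=st^*$; but $t^*=(es)^*$, so $es=s(es)^*$, exactly the deterministic identity. The codeterministic statement is obtained by the evident left-right/$^*$-$^+$ dualization, using $xy^+=(xy)^+x$ in place of $es=s(es)^*$. Finally, the bideterministic equivalence $t\leq s\Leftrightarrow t\leq' s$ follows by combining the two characterizations: $s$ deterministic gives one implication ($t\leq' s\Rightarrow t\leq s$) and $s$ codeterministic gives the other ($t\leq s\Rightarrow t\leq' s$), so $s$ is bideterministic precisely when the two orders agree on lower bounds of $s$.

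I do not anticipate a genuine obstacle here, as the lemma is a clean reformulation; the only point requiring care is keeping the two natural partial orders straight and correctly converting between the ``$t=se$'' and ``$t=st^*$'' forms of $\leq$ (and the analogous forms of $\leq'$) via Lemma~\ref{lem:23} and the projection identities. Specifically, one must remember that $t\leq s$ can be witnessed either by an arbitrary projection $e$ with $t=se$ or canonically by $t=st^*$, and that passing between them uses $(se)^*=s^*e$ together with the fact that $t^*\leq s^*$ when $t\leq s$; the proof above sidesteps most of this by choosing the canonical witness $t^*$ directly, so the argument stays short.
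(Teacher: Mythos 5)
Your proof is correct and takes essentially the same route as the paper: both directions come from testing the hypothesis on $t=es$ and unpacking the two natural partial orders, the paper merely writing the converse with an arbitrary projection witness $f$ (computing $s(es)^*=s(sf)^*=s(s^*f)^*=ss^*f=sf=es$) where you invoke the canonical form $t=st^*$ directly. One trivial slip: in your recollection of the definition the parenthetical ``equivalently $t=ts^*$'' should read $t=st^*$, which is what you actually use in the argument, so nothing is affected.
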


\begin{proof}
If $s$ is deterministic, clearly $t\leq' s$ implies $t\leq s$. If $s\in S$ and $e\in P(S)$ are such that $es = sf$ for some $f\in P(S)$, applying \eqref{eq:axioms_star} and \eqref{eq:rule1r}, we have $s(es)^* = s(sf)^* = s(s^*f)^* = ss^*f = sf = es$, so that $s$ is deterministic. The statement about codeterministic elements holds dually.
\end{proof}

Let ${\mathscr{D}}(S)$ and ${\mathscr{CD}}(S)$  denote the sets of all deterministic  and codeterministic elements of a biEhresmann semigroup $S$, respectively, and put ${\mathscr{BD}}(S) = {\mathscr D}(S) \cap {\mathscr{CD}}(S)$ to be the set of its all bideterministic elements.

It is immediate that in range semigroups all elements are deterministic. 

\begin{proposition}\label{prop:restr1}
Let $S$ be a biEhresmann semigroup.
\begin{enumerate}
\item \label{i33:1} ${\mathscr{BD}}(S)$ is a $(2,1,1)$- subalgebra of $S$ and its projection semilattice coincides with $P(S)$. Moreover, ${\mathscr{BD}}(S)$ is birestriction. 
\item \label{i33:2} ${\mathscr{BD}}(S)$ is the maximum birestriction $(2,1,1)$-subalgebra of $S$ whose projections coincide with $P(S)$.
\end{enumerate}
\end{proposition}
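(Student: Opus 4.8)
The plan is to prove that $\mathscr{BD}(S)$ is a subalgebra first, then verify it is birestriction, and finally establish its maximality.

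For part \eqref{i33:1}, I would begin by showing that $\mathscr{BD}(S)$ is closed under the three operations $\cdot\,$, $^*$ and $^+$. Closure under $^*$ and $^+$ should be nearly immediate: by Lemma~\ref{lem:det}, $s$ is bideterministic exactly when $\leq$ and $\leq'$ agree on lower bounds of $s$. Since projections satisfy $e^* = e = e^+$, every projection is bideterministic, so $P(S) \subseteq \mathscr{BD}(S)$, which also handles $s^*, s^+ \in P(S) \subseteq \mathscr{BD}(S)$ and gives the claim about the projection semilattice. The real content is closure under multiplication: given bideterministic $s,t$, I would use the characterization of Lemma~\ref{lem:det} and show $u \leq' st \Leftrightarrow u \leq st$. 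Here I would exploit the factorization of the natural partial orders through $^*$ and $^+$ together with the compatibility identities \eqref{eq:axioms_common} and the Ehresmann/coEhresmann rules \eqref{eq:rule1r}, \eqref{eq:rule1l}; the idea is that a lower bound of $st$ decomposes appropriately so that bideterminism of each factor can be invoked in turn. Once closure holds, $\mathscr{BD}(S)$ inherits all biEhresmann identities as a subalgebra; to see it is birestriction I must verify \eqref{eq:axioms_star_restr} (and dually \eqref{eq:axioms_plus_restr}). The key observation is that for a deterministic element $a$, the defining relation $ea = a(ea)^*$ for all $e \in P(S)$ is precisely the ample identity \eqref{eq:ample_r} restricted to $\mathscr{BD}(S)$, and this identity is exactly what distinguishes restriction from Ehresmann; dually for codeterministic elements and \eqref{eq:ample_l}. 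So within $\mathscr{BD}(S)$ both ample conditions hold, which yields that $\mathscr{BD}(S)$ is both restriction and corestriction, hence birestriction.

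For part \eqref{i33:2}, suppose $T$ is any birestriction $(2,1,1)$-subalgebra of $S$ with $P(T) = P(S)$. I would show $T \subseteq \mathscr{BD}(S)$ by taking $s \in T$ and proving $s \in \mathscr{D}(S) \cap \mathscr{CD}(S)$. Since $T$ is a restriction subalgebra, every element of $T$ satisfies the ample identity \eqref{eq:ample_r} \emph{within $T$}; because the operations $^*$ and $^+$ and the product are inherited from $S$ and $P(T) = P(S)$, the identity $es = s(es)^*$ holds in $S$ for every $e \in P(S)$, which is exactly determinism of $s$ in $S$. Dually, $T$ being corestriction gives codeterminism. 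Hence $s$ is bideterministic, so $T \subseteq \mathscr{BD}(S)$, establishing maximality.

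The main obstacle I anticipate is the closure of $\mathscr{BD}(S)$ under multiplication in part \eqref{i33:1}. The operations $^*$ and $^+$ interact only through the commuting projections and the compatibility identities \eqref{eq:axioms_common}, so tracking how a lower bound of a product $st$ splits into contributions controlled separately by $s$ and by $t$ requires careful use of \eqref{eq:rule1r}, \eqref{eq:rule1l} and Lemma~\ref{lem:23}. I expect that the cleanest route is via the order characterization in Lemma~\ref{lem:det} rather than via the equational definition directly, since reasoning about lower bounds makes the role of the two partial orders transparent and lets the bideterminism of each factor be applied to an intermediate element. Everything else—closure under the unary operations, the projection claim, the birestriction property, and maximality—should follow routinely once the identities are assembled in the right order.
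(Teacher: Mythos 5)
Your proposal is correct, and its overall architecture (projections are bideterministic; closure under $^*$, $^+$ and multiplication; the ample identities give the birestriction property; maximality from \eqref{eq:ample_r} and its dual) matches the paper's. The only real divergence is in the one nontrivial step, closure of ${\mathscr{D}}(S)$ under multiplication together with the restriction property: the paper simply cites \cite[Proposition 2.3]{CGH12} for this, whereas you propose to prove it directly via the order characterization of Lemma~\ref{lem:det}. Your route does close: if $u\leq' st$ then $u=e(st)=(es)t=s(es)^*t$ by determinism of $s$, and since $(es)^*t\leq' t$, determinism of $t$ gives $(es)^*t\leq t$, whence $u\leq st$. For comparison, the equational route behind the cited result is a one-line computation you may find cleaner: $e(st)=(es)t=s(es)^*t=st\bigl((es)^*t\bigr)^*=st\bigl(e(st)\bigr)^*$, using \eqref{eq:ample_r} for $t$ and $(xy)^*=(x^*y)^*$. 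Your observation that, within an Ehresmann subalgebra whose elements are all deterministic, the identity $ea=a(ea)^*$ for $e\in P(S)$ is equivalent to \eqref{eq:axioms_star_restr} is exactly the point needed to conclude that ${\mathscr{BD}}(S)$ is birestriction, and your part (2) is identical to the paper's.
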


\begin{proof}
(1) Clearly all projections are bideterministic, so projections of  ${\mathscr{BD}}(S)$ coincide with $P(S)$.  
It is easy to see and known that ${\mathscr D}(S)$ 
is closed with respect to the multiplication and the support operation $^*$, and is restriction, (see \cite[Proposition 2.3]{CGH12}). 
A dual statement holds for
${\mathscr{CD}}(S)$, so that ${\mathscr{BD}}(S)$ is a is a $(2,1,1)$-subalgebra of $S$ and is birestriction.

(2) Let $T$ be a birestriction $(2,1,1)$-subalgebra of $S$ with $P(T)=P(S)$ and $a\in T$. It follows from \eqref{eq:ample_r} that $a\in {\mathscr D}(S)$. By symmetry, we also have $a\in {\mathscr{CD}}(S)$. 
\end{proof}

\begin{corollary}\label{cor:det} Let $S$ be a range semigroup. Then ${\mathscr{D}}(S) = S$, ${\mathscr{CD}}(S) = {\mathscr{BD}}(S)$ is a birestriction $(2,1,1)$-subalgebra of $S$  and $P({\mathscr{BD}}(S)) = P(S)$. 
\end{corollary}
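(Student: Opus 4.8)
The plan is to derive all four assertions directly from the results already in place, since this corollary is essentially the specialization of Proposition \ref{prop:restr1} to the range-semigroup case, combined with the observation that the restriction axiom forces every element to be deterministic. I do not expect any genuine obstacle: the content is a routine assembly of the preceding facts, the only substantive input being the passage from identity \eqref{eq:ample_r} to the determinism of all elements.

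First I would establish ${\mathscr{D}}(S) = S$. Because $S$ is a range semigroup, $(S,\cdot,^*)$ is a restriction semigroup, so identity \eqref{eq:ample_r} applies: for every $s\in S$ and every $e\in P(S)$ we have $es = s(es)^*$. This is exactly the defining condition of a deterministic element in Definition \ref{def:determ}, so every $s\in S$ is deterministic and hence ${\mathscr{D}}(S) = S$. (This is the ``immediate'' remark already noted before the statement; I would record it explicitly since it is the one place where the restriction hypothesis is used.)

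Next I would read off ${\mathscr{CD}}(S) = {\mathscr{BD}}(S)$ from the definition ${\mathscr{BD}}(S) = {\mathscr{D}}(S)\cap{\mathscr{CD}}(S)$. Substituting ${\mathscr{D}}(S) = S$ gives ${\mathscr{BD}}(S) = S\cap{\mathscr{CD}}(S) = {\mathscr{CD}}(S)$, as claimed.

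Finally, the structural claims follow from Proposition \ref{prop:restr1}(\ref{i33:1}). A range semigroup is in particular a biEhresmann semigroup, so that proposition applies verbatim and yields that ${\mathscr{BD}}(S)$ is a birestriction $(2,1,1)$-subalgebra of $S$ whose projection semilattice equals $P(S)$; this delivers both the birestriction assertion and $P({\mathscr{BD}}(S)) = P(S)$, completing all parts of the corollary.
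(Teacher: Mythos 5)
Your proposal is correct and follows exactly the route the paper intends: the corollary is left without explicit proof precisely because it is the combination of the earlier remark that \eqref{eq:ample_r} makes every element of a restriction semigroup deterministic (giving ${\mathscr{D}}(S)=S$ and hence ${\mathscr{BD}}(S)={\mathscr{CD}}(S)$) with Proposition \ref{prop:restr1}(\ref{i33:1}) applied to the underlying biEhresmann semigroup. Nothing is missing.
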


\subsection{Partial isomorphisms}  
The following definition is adapted from \cite[Definition 3.6]{CGH12}
where it is given for restriction categories.
\begin{definition} (Partial isomorphisms) \label{def:partial_isom}
An element $s$ of a restriction semigroup $S$ is called a {\em partial isomorphism}, if there is $t\in S$ satisfying $st=t^*$ and $ts=s^*$. 
\end{definition}
An element $t$ as above is called a {\em partial inverse} of $s$\footnote{In \cite{JS01}, the elements $s$ and $t$ as in Definition \ref{def:partial_isom} are said to be {\em true inverses} of each other.}. Clearly, a partial inverse of a partial isomorphism is itself a partial isomorphism. In addition, all projections are partial isomorphisms. Let ${\mathscr{I}}(S)$ denote the set of all partial isomorphisms of a restriction semigroup $S$.

\begin{lemma} \label{lem:true} Let $S$ be a restriction semigroup. 
\begin{enumerate}
\item If $s$ is a partial isomorphism then its partial inverse $s'$ is unique and $(s')'=s$.
\item ${\mathscr{Inv}}(S)$ is an inverse semigroup with $s^{-1} = s'$. Consequently, if $S={\mathscr{I}}(S)$, then $S$ is itself an inverse semigroup.
\end{enumerate}
\end{lemma}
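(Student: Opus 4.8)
The plan is to handle the two parts in sequence, deriving the inverse semigroup structure of part (2) from the uniqueness established in part (1).

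For part (1), I would first observe that the two defining equations $st=t^*$ and $ts=s^*$ are symmetric in $s$ and $t$: together they assert both that $t$ is a partial inverse of $s$ and that $s$ is a partial inverse of $t$. Consequently, once uniqueness of the partial inverse is proved, the relation $(s')'=s$ is immediate. The crux of uniqueness is the auxiliary identity $s^*t=t$, valid for any partial inverse $t$ of $s$, which follows from a single application of the restriction identity \eqref{eq:axioms_star_restr}: $s^*t=t(st)^*=t(t^*)^*=tt^*=t$. Given two partial inverses $t$ and $u$, I would then compute, using $s^*=us$, that $t=s^*t=(us)t=u(st)=ut^*$, whence $t\le u$ by the definition of the natural partial order; the symmetric computation gives $u\le t$, and antisymmetry of $\le$ forces $t=u$.

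For part (2), write $s'$ for the partial inverse of $s\in{\mathscr{I}}(S)$. I would first check that $s'$ is a genuine (von Neumann) inverse of $s$ inside $S$: expanding $ss's$ and $s'ss'$ reduces them to $t^*s=s$ and $s^*t=t$, which are the two instances of \eqref{eq:axioms_star_restr} noted above. Moreover $ss'=(s')^*$ and $s's=s^*$ both lie in $P(S)$. The two remaining tasks are to show that ${\mathscr{I}}(S)$ is closed under multiplication and that its idempotents commute; the standard characterization of inverse semigroups as exactly the regular semigroups with commuting idempotents then finishes the proof, and $s^{-1}=s'$ follows from the uniqueness of inverses in an inverse semigroup.

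Closure is the main technical obstacle. The natural candidate for the partial inverse of a product $su$ is $u's'$, and I would verify the two required equations $(su)(u's')=(u's')^*$ and $(u's')(su)=(su)^*$ directly. Each is a bookkeeping exercise combining the defining relations $uu'=(u')^*$, $u'u=u^*$, $ss'=(s')^*$, $s's=s^*$ with the restriction identity \eqref{eq:ample_r} (to move projections past elements), the commutativity of projections, and the monotonicity of $^*$ from Lemma \ref{lem:23} (to absorb products of comparable projections). For instance, $(u's')(su)=u'(s's)u=u's^*u=(su)^*$, after rewriting $s^*u=u(s^*u)^*$, using $u'u=u^*$, and noting $(s^*u)^*\le u^*$. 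Finally, I would identify the idempotents of ${\mathscr{I}}(S)$ with the projections $P(S)$: every projection is an idempotent partial isomorphism, and conversely, if $e\in{\mathscr{I}}(S)$ is idempotent with partial inverse $f$, then $e^*=fe=f(ee)=(fe)e=e^*e$, while \eqref{eq:axioms_star_restr} gives $e^*e=e(ee)^*=ee^*=e$, so $e^*=e$ and $e\in P(S)$. Since $P(S)$ is a semilattice, these idempotents commute, and ${\mathscr{I}}(S)$ is an inverse semigroup; the closing assertion that $S={\mathscr{I}}(S)$ implies $S$ is inverse is then immediate, as the constructed structure is carried by $S$ itself.
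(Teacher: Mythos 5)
Your proof is correct and follows essentially the same route as the paper: show regularity via $ss's=s$, $s'ss'=s'$, identify the idempotents of ${\mathscr{I}}(S)$ with projections so that they commute, and invoke the characterization of inverse semigroups as regular semigroups with commuting idempotents. The only difference is that where the paper delegates uniqueness of partial inverses and closure under multiplication to \cite[Lemma 3.7(ii)]{CGH12}, you supply those arguments explicitly (via $s^*t=t$, the antisymmetry of $\leq$, and the candidate $u's'$ for $(su)'$), and your computations check out.
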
 

\begin{proof}
Uniqueness of partial inverses and that they are closed with respect to the multiplication is proved the same as \cite[Lemma 3.7(ii)]{CGH12}. If $s\in {\mathscr{I}}(S)$ then $ss's=ss^* = s$ and $s'ss' = s'(s')^* = s'$, so $s\in {\mathscr{I}}(S)$ is a regular semigroup. Its idempotents are precisely elements of the form $ss'$, so they commute, as $ss' = (s')^*$ and projections of $S$ commute. It follows that ${\mathscr{I}}(S)$ is an inverse semigroup.
\end{proof}

If $S$ is an inverse semigroup, then $S={\mathscr{I}}(S)$ so that $s'=s^{-1}$ for all $s\in S$.

\subsection{Partial isomorphisms vs bideterministic elements in range semigroups}
To compare between partial isomorphisms and bideterministic elements, we need to restrict attention to the setting where both of these notions are defined, which is precisely the class of range semigroups.

\begin{remark}
We show later on in Subsection \ref{subs:groupoidal} that restriction monoids attached to ample groupoids by Garner \cite{G23b} are necessarily range monoids, so both partial isomorphisms and bideterministic elements are defined for these monoids.
\end{remark}

\begin{lemma} \label{lem:6j2}
 Let $S$ be a range semigroup.
\begin{enumerate}
\item If $s\in S$ is a partial isomorphism then $s^+ = (s')^*$.
\item If $s\in S$ is a partial isomorphism then $s$ is bideterministic, i.e., ${\mathscr{I}}(S)\subseteq {\mathscr{BD}}(S)$.
\end{enumerate}
\end{lemma}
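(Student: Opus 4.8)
The plan is to treat the two parts separately, using throughout the partial inverse $s'$ of $s$ (so that $ss'=(s')^*$ and $s's=s^*$) together with the inverse semigroup ${\mathscr{I}}(S)$ furnished by Lemma \ref{lem:true}.

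For part (1) I would realize $s^+$ and $(s')^*$ as two projections and squeeze them against one another in the semilattice $P(S)$. From $s's=s^*$ and $ss^*=s$ one obtains $s=ss^*=s(s's)=(ss')s=(s')^*s$; applying $^+$ and \eqref{eq:rule1l} gives $s^+=((s')^*s)^+=(s')^*s^+$, so $s^+\leq (s')^*$. Conversely, from $ss'=(s')^*$ and $s^+s=s$ one gets $(s')^*=ss'=(s^+s)s'=s^+(s')^*$, so $(s')^*\leq s^+$. Antisymmetry of the semilattice order then yields $s^+=(s')^*$.

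For part (2), since $S$ is a range semigroup, Corollary \ref{cor:det} already gives ${\mathscr{D}}(S)=S$, so $s$ is automatically deterministic and only codeterminism remains. By the codeterministic case of Lemma \ref{lem:det} this amounts to showing that $u\leq s$ implies $u\leq' s$ for every $u\in S$. Writing $u=su^*$ and setting $e:=u^*\in P(S)$, the task reduces to exhibiting, for each $e\in P(S)$, a projection $g$ with $se=gs$, which would give $se\leq' s$.

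The natural candidate is $g:=ses'$, and the main obstacle is to verify that this element genuinely lies in $P(S)$ (it is idempotent, but being a projection is stronger and not immediate). Here I would pass to the inverse semigroup ${\mathscr{I}}(S)$ of Lemma \ref{lem:true}: projections are partial isomorphisms, so $e,s,s'\in {\mathscr{I}}(S)$ and hence $se\in{\mathscr{I}}(S)$ with inverse $(se)'=e's'=es'$, using that projections are self-inverse. Then $ses'=(se)(es')=(se)(se)'=((se)')^*\in P(S)$, which resolves the obstacle. Once $g=ses'\in P(S)$ is secured, a short computation closes the argument: $gs=ses's=se\,s^*=se$, where the final equality uses that $e$ and $s^*$ commute and $ss^*=s$. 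Thus $se=gs$ with $g\in P(S)$, so $se\leq' s$; by Lemma \ref{lem:det} the element $s$ is codeterministic, and being also deterministic it is bideterministic, giving ${\mathscr{I}}(S)\subseteq{\mathscr{BD}}(S)$.
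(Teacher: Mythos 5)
Your proof is correct. Part (1) is essentially the paper's argument rearranged: the paper shows $(s')^*s=ss's=s$ and that any projection $f$ with $fs=s$ satisfies $f\geq (s')^*$, identifying $(s')^*$ with the minimum such projection, which is $s^+$; your two one-sided inequalities $s^+=(s')^*s^+$ and $(s')^*=s^+(s')^*$ encode the same computations. Part (2) is where you genuinely diverge. The paper argues structurally: by Lemma \ref{lem:true} the set ${\mathscr{I}}(S)$ is an inverse semigroup, hence a birestriction $(2,1,1)$-subalgebra whose idempotents coincide with $P(S)$, so Proposition \ref{prop:restr1}(2) places it inside ${\mathscr{BD}}(S)$. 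You instead verify codeterminism directly by exhibiting, for each $e\in P(S)$, the explicit witness $g=ses'$ and checking $g\in P(S)$ (via $ses'=(se)(se)'=((se)')^*$, which correctly uses closure of ${\mathscr{I}}(S)$ under products and the anti-homomorphism property of partial inverses) and $gs=ses's=ses^*=se$. Your route is more computational and makes the conjugating projection visible, at the cost of re-deriving by hand what the paper's appeal to maximality of ${\mathscr{BD}}(S)$ delivers for free; the paper's route is shorter but leans on Proposition \ref{prop:restr1}(2). Both are sound, and both ultimately rest on Lemma \ref{lem:true}.
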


\begin{proof}
(1) We have $(s')^*s = ss's = ss^* = s$ and if $fs=s$ for $f\in P(S)$ then also $fss' = ss'$, that is, $f(s')^* = (s')^*$, so that $f\geq (s')^*$. Therefore, $(s')^*$ is the minimum projection $f$ such that $fs=s$. Hence $(s')^*=s^+$.

(2) Since ${\mathscr{I}}(S)$ is inverse, it is birestriction, so every its element is bideterministic. But idempotents of ${\mathscr{I}}(S)$ coincide with projections of $S$, so every element of ${\mathscr{I}}(S)$ is bideterministic.
\end{proof}

\begin{example} \label{ex:24b}
If $S = {\mathcal{PT}}(X)$ then ${\mathscr{BD}}(S) = {\mathscr{I}}(S)={\mathcal{I}}(X)$.
\end{example}

The following example shows that the equality ${\mathscr{I}}(S)={\mathscr{BD}}(S)$ does not hold in general.

\begin{example}
Let $S = \{s\in {\mathcal{PT}}_n\colon s(x) \geq x \text{ for all } x\in {\mathrm{dom}}(s)\}$. This is a range semigroup isomorphic to the semigroup of all lower triangular Boolean matrices over ${\mathbb B}$ such that every column contains at most one $1$. Then ${\mathscr{BD}}(S) = \{s\in {\mathcal{I}}_n\colon s(x) \geq x \text{ for all } x\in {\mathrm{dom}}(s)\}$, which is isomorphic to the semigroup of lower triangular Boolean matrices over ${\mathbb B}$ such that every column and every row contains at most one $1$. On the other hand,
${\mathscr{I}}(S) = P({\mathcal{PT}}_n) = E({\mathcal I}_n)$, which is isomorphic to the semigroup of diagonal Boolean matrices over ${\mathbb B}$. More generally, if $S$ is a subsemigroup of ${\mathcal{PT}}(X)$ which contains all the projections of ${\mathcal{PT}}(X)$ then ${\mathscr{BD}}(S) = S \cap {\mathcal I}(X)$ and
${\mathscr{I}}(S) = \{s\in {\mathscr{BD}}(S)\colon s^{-1}\in S\}$ where $s^{-1}$ denote the inverse partial bijection to $s$. 
\end{example}

\subsection{The compatibility, cocompatibility and bicompatibility relations and joins}\label{subs:comp}
\begin{definition} (Compatible elements)
Let $S$ be a restriction semigroup. Elements $s,t\in S$ are called {\em  compatible} denoted $s\smile t$\footnote{This notation is taken from category theory literature \cite{CG21, CM09, CL24} and is different from the standard notation $\sim$ (used mainly for bicompatibility relation) in semigroup theory literature. The reason for switching to the new notation is that along with the compatibility relation, we will also discuss the cocompatibility and bicompatibility relations, which we will denote by $\frown$ and $\asymp$, respectively.}, if $st^* = ts^*$. 
\end{definition}

\noindent Cocompatible elements in corestriction semigroups are defined dually, denoted by $s\frown t$. We note that the definition of compatible elements in a restriction semigroup is agreed with the definition of compatible parallel maps in a restriction category \cite[Proposition~6.3]{CM09}.

\begin{lemma} \label{lem:2} Let $S$ be a restriction semigroup and $s,t\in S$ be such that $s\smile t$. Then $s\leq t$ if and only if $s^* \leq t^*$. Consequently, $s=t$ if and only if $s^* = t^*$.
\end{lemma}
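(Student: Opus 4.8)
The plan is to treat the two implications separately and then obtain the ``consequently'' clause by antisymmetry of the natural partial order. The forward implication $s\leq t \Rightarrow s^*\leq t^*$ requires nothing about compatibility: it is exactly Lemma \ref{lem:23} applied to the pair $(s,t)$. So the substance of the statement lies in the converse, and it is there that the hypothesis $s\smile t$, i.e.\ $st^*=ts^*$, will be used in an essential way.

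For the converse, I would suppose $s\smile t$ and $s^*\leq t^*$. The first thing to record is that, since the restriction of $\leq$ to $P(S)$ is the semilattice order, $s^*\leq t^*$ is the same as the equation $s^*=t^*s^*=s^*t^*$. This is the key preparatory step, because it lets one insert $t^*$ next to $s$. Starting from the identity $s=ss^*$ of \eqref{eq:axioms_star} and substituting $s^*=t^*s^*$, I would compute
\[
s = ss^* = s(t^*s^*) = (st^*)s^* = (ts^*)s^* = t(s^*s^*) = ts^*,
\]
where the third equality is associativity, the fourth is the compatibility relation $st^*=ts^*$, and the last uses $s^*s^*=s^*$. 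Thus $s=ts^*$, which by definition means $s\leq t$.

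For the final clause, I would first note that $\smile$ is symmetric, since its defining equation $st^*=ts^*$ is symmetric in $s$ and $t$; hence $t\smile s$ holds as well. If $s^*=t^*$, then both $s^*\leq t^*$ and $t^*\leq s^*$, so applying the equivalence just proved to the pair $(s,t)$ and to the pair $(t,s)$ yields $s\leq t$ and $t\leq s$; antisymmetry of $\leq$ then forces $s=t$. The reverse implication $s=t\Rightarrow s^*=t^*$ is immediate by applying $^*$ to both sides.

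I do not expect a genuine obstacle here: the argument is a short computation resting on \eqref{eq:axioms_star} and the definition of $\smile$. The only point demanding a little care is the very first move of the converse, namely rewriting the inequality $s^*\leq t^*$ as the equation $s^*=t^*s^*$, which is precisely what allows $t^*$ to be placed adjacent to $s$ and then absorbed through the compatibility relation.
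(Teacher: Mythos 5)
Your proof is correct and follows essentially the same route as the paper: the forward direction is Lemma~\ref{lem:23}, and the converse rewrites $s=ss^*$ using $s^*=s^*t^*$ and the compatibility identity $st^*=ts^*$ to land on $s=ts^*\leq t$ (the paper's chain $s=ss^*=st^*=ts^*\leq tt^*=t$ is the same computation with the parentheses arranged slightly differently). The ``consequently'' clause via symmetry of $\smile$ and antisymmetry of $\leq$ is exactly what the paper leaves implicit.
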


\begin{proof}
In view of Lemma \ref{lem:23}, only one direction needs proving. Suppose $s\smile t$ and $s^*\leq t^*$. Then $s=ss^* = st^* = ts^* \leq tt^* =t$, as needed.   \end{proof}

The following lemma shows that the  compatibility relation in restriction semigroups is stable with respect to the multiplication from the right and from the left. We will use this fact in the sequel without further mention.

\begin{lemma}\label{lem:compatibility}
Let $S$ be a restriction semigroup and $s,t,u\in S$ be such that $s\smile t$. Then $su\smile tu$ and $us\smile ut$.
\end{lemma}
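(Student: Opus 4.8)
The plan is to unwind both compatibilities straight from the axioms, recalling that $s\smile t$ means $st^*=ts^*$ and that I must establish $(su)(tu)^*=(tu)(su)^*$ and $(us)(ut)^*=(ut)(us)^*$.

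For the right-hand statement I would argue directly with the restriction axiom \eqref{eq:axioms_star_restr}. Applying it in the form $t^*u=u(tu)^*$ gives $(su)(tu)^*=s\,(u(tu)^*)=s(t^*u)=(st^*)u$, and symmetrically $(tu)(su)^*=(ts^*)u$. Since $st^*=ts^*$, these two expressions agree, so $su\smile tu$. This direction is immediate precisely because the defining restriction identity is the right tool for pushing a support projection through a product on the right.

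The left-hand statement $us\smile ut$ is the genuinely harder case, since left multiplication is not governed by a one-sided axiom. First I would record the auxiliary inequality $(ut)^*\le t^*$: indeed $(ut)^*=(u^*t)^*$ by \eqref{eq:axioms_star}, while \eqref{eq:axioms_star_restr} gives $u^*t=t(ut)^*$, so $u^*t\le t$ and hence $(u^*t)^*\le t^*$ by Lemma \ref{lem:23}; as $(u^*t)^*=(ut)^*$ this is the claim. Consequently $(ut)^*=t^*(ut)^*$, and I can compute $(us)(ut)^*=(ust^*)(ut)^*$; replacing $st^*$ by $ts^*$ via compatibility turns this into $(uts^*)(ut)^*$, after which commuting the projections $s^*$ and $(ut)^*$ together with $ut(ut)^*=ut$ collapses it to $uts^*$. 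The identical computation with $s$ and $t$ interchanged yields $(ut)(us)^*=ust^*$. A final appeal to $st^*=ts^*$ shows $uts^*=u(ts^*)=u(st^*)=ust^*$, so $(us)(ut)^*=(ut)(us)^*$, i.e.\ $us\smile ut$.

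The main obstacle is exactly this left case: because the restriction identity is right-handed, there is no direct way to simplify $us(ut)^*$, so the key preparatory move is the estimate $(ut)^*\le t^*$, which lets me rewrite $(ut)^*=t^*(ut)^*$ and thereby expose the product $st^*$ on which compatibility can act. Once this is in place the remaining manipulations are routine commutations of projections.
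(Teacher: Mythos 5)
Your proof is correct and follows essentially the same route as the paper: the right-hand case by pushing $t^*$ through $u$ via the restriction identity, and the left-hand case by first establishing $(ut)^*\le t^*$ so that $t^*$ can be inserted and compatibility applied. The only cosmetic difference is that you collapse $(us)(ut)^*$ all the way to $uts^*$, whereas the paper stops at $ust^*(us)^*(ut)^*$ and compares the two symmetric expressions directly.
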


\begin{proof}
Using \eqref{eq:axioms_star} and \eqref{eq:ample_r}, we have $(su)(tu)^* = su(t^*u)^* = st^*u$.
Similarly, $(tu)(su)^* = ts^*u$. Since $st^*u=st^*u$, we obtain $su \smile tu$.

Furthermore, since $u^*t = t(u^*t)^*\leq t$, we have $(ut)^*=(u^*t)^* \leq t^*$. Therefore,
$(us)(ut)^*=ust^*(ut)^*=
ust^*(us)^*(ut)^*$. Similarly, $(ut)(us)^*=uts^*(ut)^*(us)^*$, and we conclude that $us \smile ut$.
\end{proof}

The following definition is motivated by the definition of a join in restriction categories~\cite{CM09} and in inverse semigroups \cite{Lawson_book}.

\begin{definition}\label{def:join} Let $S$ be a restriction semigroup. For $s,t\in S$ by $s\vee t$ we denote the {\em join} of $s$ and $t$ with respect to $\leq$, if it exists. It is defined as the {\em least upper bound} of $s$ and $t$ with respect to $\leq$, that is, $s,t\leq s\vee t$ and whenever $s,t\leq u$ then $s\vee t\leq u$.
\end{definition}

\begin{lemma} \label{lem:3} If $s, t$ have an upper bound with respect to $\leq$, then $s\smile t$. In particular,
if $s\vee t$ exists, then $s,t\leq s \vee t$, so that $s \smile t$.
\end{lemma}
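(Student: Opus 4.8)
The plan is to unpack the definition of the natural partial order and exploit the commutativity of projections. Suppose $s$ and $t$ share an upper bound $u$, so that $s\leq u$ and $t\leq u$. By the characterization of the natural partial order recorded in the definition (namely $x\leq y$ iff $x=yx^*$), I would immediately write $s=us^*$ and $t=ut^*$, turning the order-theoretic hypothesis into two explicit algebraic identities.

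With these in hand the verification of $s\smile t$, i.e.\ of $st^*=ts^*$, reduces to a one-line computation. First I would substitute to get
\[
st^* = (us^*)t^* = us^*t^*,\qquad ts^* = (ut^*)s^* = ut^*s^*.
\]
The only remaining ingredient is that $s^*,t^*\in P(S)$ and that $P(S)$ is a semilattice, hence commutative, so $s^*t^*=t^*s^*$. Combining the two displayed expressions then yields $st^*=us^*t^*=ut^*s^*=ts^*$, which is exactly $s\smile t$.

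For the ``in particular'' clause I would simply note that when $s\vee t$ exists it is by Definition \ref{def:join} an upper bound of $s$ and $t$ (indeed $s,t\leq s\vee t$), so the first part applies verbatim to give $s\smile t$.

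I do not anticipate any genuine obstacle here: the statement is essentially a direct consequence of rewriting $\leq$ via the support operation together with the fact that projections commute. The only point worth flagging is to make sure one invokes the correct characterization $s=us^*$ (rather than merely $s=ue$ for some unspecified $e\in P(S)$), since pinning down the support element is what makes the commutation of $s^*$ and $t^*$ directly usable.
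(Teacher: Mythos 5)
Your proof is correct and takes essentially the same route as the paper's: both unpack the hypothesis via the characterization $s=us^*$, $t=ut^*$ of the natural partial order and then invoke commutativity of projections. The paper phrases the computation slightly differently (it observes $st^*,ts^*\leq u$ and $(st^*)^*=(ts^*)^*=s^*t^*$, then applies $x\leq u\Rightarrow x=ux^*$), but your direct substitution $st^*=us^*t^*=ut^*s^*=ts^*$ amounts to the same argument.
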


\begin{proof}
Note that if $s,t \leq u$ then $st^*, ts^*\leq u$. Furthermore, $(st^*)^* = s^*t^* = (ts^*)^*$ by \eqref{eq:rule1r} and \eqref{eq:axioms_star}, so that $st^* = u(st^*)^* = u(ts^*)^* = ts^*$. 
\end{proof}

The following is similar to \cite[Lemma 2.13(2), Lemma 2.15]{KL17}.

\begin{lemma} \label{lem:joins} Let $S$ be a  restriction semigroup.
\begin{enumerate}
\item Let $e,f\in P(S)$ and suppose that $e\vee f$ exists in $S$. Then $e\vee f \in P(S)$ and $e\vee f$ is the join of $e$ and $f$ in $P(S)$.
\item Let $s,t\in S$. Suppose that $s\vee t$ and $s^*\vee t^*$ exist in $S$. Then $(s\vee t)^* = s^* \vee t^*$. 
\item Let $e,f,s\in S$ be such that $e,f\leq s^*$ and suppose that the joins $e\vee f$ and $se\vee sf$ exist in $S$. Then $se\vee sf = s(e\vee f)$.
\end{enumerate}
\end{lemma}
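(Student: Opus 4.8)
The plan is to handle the three parts in order, since parts (2) and (3) both feed on part (1), and to rely throughout on the behaviour of $^*$ with respect to the natural partial order (Lemma \ref{lem:23} and Lemma \ref{lem:1}) together with the compatibility criterion of Lemma \ref{lem:2}. For part (1), I would set $g = e\vee f$ and first observe, via Lemma \ref{lem:23}, that $e = e^*\le g^*$ and $f = f^*\le g^*$, so that $g^*$ is an upper bound of $e$ and $f$. Minimality of $g$ then forces $g\le g^*$, and unwinding the natural partial order (in the form $s = ts^*$) gives $g = g^*g^* = g^*$, i.e. $g\in P(S)$. The assertion that $g$ is the join in $P(S)$ is then automatic: any projection lying above $e$ and $f$ is in particular an upper bound in $S$, hence lies above $g$.

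For part (2), I would write $u = s\vee t$ and $v = s^*\vee t^*$, noting $v\in P(S)$ by part (1). Lemma \ref{lem:23} shows $u^*$ is an upper bound of $s^*$ and $t^*$, giving $v\le u^*$. For the reverse inequality the idea is to check that $uv$ is again an upper bound of $s$ and $t$: from $s = us^*$ and $s^*\le v$ one computes $uvs^* = us^* = s$, and symmetrically for $t$, so $s,t\le uv$. Minimality of $u$ then gives $u\le uv$, while $uv\le u$ is automatic, whence $u = uv$; applying $^*$ and \eqref{eq:rule1r} yields $u^* = u^*v\le v$, so $u^* = v$, as required.

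For part (3), I would first note that $e,f\le s^*$ forces $e,f\in P(S)$ (an element below a projection is a projection, by \eqref{eq:rule1r}), so that part (1) applies and $g := e\vee f\in P(S)$ with $g\le s^*$. Using \eqref{eq:rule1r} one gets $(se)^* = s^*e = e$, $(sf)^* = s^*f = f$ and $(sg)^* = s^*g = g$. Since $e,f\le g$, Lemma \ref{lem:1}(2) makes $sg$ an upper bound of $se$ and $sf$, so the join $p := se\vee sf$ satisfies $p\le sg$; in particular $p$ and $sg$ have the common upper bound $sg$, hence $p\smile sg$ by Lemma \ref{lem:3}. On the other hand, part (2) applied to $se$ and $sf$ (whose supports join to $e\vee f = g$) gives $p^* = (se)^*\vee(sf)^* = g = (sg)^*$. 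Lemma \ref{lem:2} then upgrades this equality of supports, for the compatible pair $p$ and $sg$, to $p = sg$, that is $se\vee sf = s(e\vee f)$.

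The genuinely delicate point is part (3). Verifying directly that $sg$ satisfies the universal property of $se\vee sf$ --- namely that it lies below every upper bound of $se$ and $sf$ --- is awkward, because left multiplication by $s$ interacts poorly with arbitrary upper bounds. The key manoeuvre is therefore to avoid this: I would produce $sg$ as an element that is compatible with the \emph{already existing} join $p = se\vee sf$ and that has the same support, and then invoke Lemma \ref{lem:2} to promote the equality of projections $p^* = (sg)^*$ to the equality $p = sg$. This reduces the problem to computing supports, which parts (1) and (2) already control.
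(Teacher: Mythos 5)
Your proof is correct, and it follows essentially the same route as the paper's: part (1) via the observation that $(e\vee f)^*$ is an upper bound of $e$ and $f$, part (2) by bounding $(s\vee t)^*$ against the element $(s\vee t)(s^*\vee t^*)$, and part (3) by exhibiting a common upper bound of $se\vee sf$ and $s(e\vee f)$ to get compatibility and then reducing to the equality of supports via Lemma \ref{lem:2} and part (2). The only cosmetic differences are that in (3) you use $sg$ rather than $s$ as the common upper bound, and in (2) you phrase the bound as $u=uv$ rather than $(s\vee t)^*\leq x^*$; neither changes the substance.
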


\begin{proof}
(1) We have $e,f \leq e\vee f$, so that $e^*, f^* \leq (e\vee f)^*$. Since $e=e^*$ and $f=f^*$, it follows that $e\vee f\leq  (e\vee f)^*$. But it is easy to see that an element below a projection is  itself a projection.

(2) Since $s,t\leq s\vee t$, we have $s^*, t^* \leq (s\vee t)^*$, so that $s^*\vee t^* \leq (s\vee t)^*$. To prove the reverse inequality, we put $x=(s\vee t)(s^* \vee t^*)$. By Lemma \ref{lem:1}(2) we have $s = ss^* \leq s(s^* \vee t^*) \leq (s\vee t)(s^* \vee t^*) = x$ and similarly $t\leq x$. It follows that $s\vee t \leq x$, so that $(s\vee t)^* \leq x^*$. But because $s^* \vee t^*$ is by part (1) a projection, we have $x^* = ((s\vee t)(s^* \vee t^*))^* = (s\vee t)^*(s^* \vee t^*)\leq s^*\vee t^*$.

(3) Since $se,sf\leq s$, we have $se\vee sf
\leq s$. By part (1) we have that $e\vee f$ is a projection, so $s(e\vee f)\leq s$ as well. It follows that $se\vee sf \smile s(e\vee f)$. In view of Lemma \ref{lem:2} it suffices to prove that $(se\vee sf)^* = (s(e\vee f))^*$. By part (2) we have 
$(se\vee sf)^* = (se)^* \vee (sf)^* = e\vee f$. Since $e, f\leq s^*$, also $e\vee f\leq s^*$, so that $s^*(e\vee f) = e\vee f$.
Now \eqref{eq:ample_r} gives us $(s(e\vee f))^* = s^*(e\vee f) = e\vee f$. The statement follows.
\end{proof}

The statements above can be obviously extended from binary joins to any non-empty finite joins\footnote{The empty join is defined to be equal to $0$.}. 
Dual statements hold for corestriction semigroups, the  cocompatibility relation $\frown$ and the joins with respect to $\leq'$. The following definition is taken from \cite{KL17}.

\begin{definition} (Bicompatible elements)
 Let $S$ be a birestriction semigroup. Elements $s,t\in S$ are called {\em bicompatible}, denoted by $s \asymp t$, if they are both compatible and cocompatible, that is, if $st^* = ts^*$ and $t^+s=s^+t$.   
\end{definition}  

If $S$ is a birestriction semigroup, 
the orders $\leq$ and $\leq'$ coincide, so one does need to specify the order with respect to which the join of $a$ and $b$ is taken. By Lemma \ref{lem:3} and its dual, a necessary condition for elements $a$ and $b$ of a birestriction semigroup $S$ to have a join is $a\asymp b$.

We emphasize that the relations of compatibility, cocompatibility and bicompatibility on a birestriction semigroup in general do not coincide, as the following example illustrates.

\begin{example}\label{ex:23a}
Let $n\geq 2$ and $s,t\in {\mathcal I}_n$. Then $s\smile t$ if and only if $s$ and $t$ are agreed on ${\mathrm{dom}}(s) \cap {\mathrm{dom}}(t)$, that is, for any $x\in {\mathrm{dom}}(s) \cap {\mathrm{dom}}(t)$ we have $s(x) = t(x)$. Similarly, $s\frown t$ if and only if for any $x\in  {\mathrm{ran}}(s) \cap {\mathrm{ran}}(t)$ we have $s^{-1}(x) = t^{-1}(x)$. Take, for example, let $n=2$ and $s=\begin{pmatrix} 1 & 2\\ 1 & \varnothing\end{pmatrix}$, $t=\begin{pmatrix} 1 & 2 \\ \varnothing & 1\end{pmatrix}$ and  $u=\begin{pmatrix} 1 & 2 \\ 2 & \varnothing\end{pmatrix}$ (where the symbol $\varnothing$ in the second line means that the element written above it is not in the domain). These elements are bideterministic (by Example \ref{ex:24b}) and $s\smile t$, $s\not\asymp t$, $s\frown u$, and $s\not\asymp u$. The join $s\vee t$  exists in ${\mathcal{PT}}_2$ and equals $\begin{pmatrix} 1 & 2 \\ 1 & 1\end{pmatrix}$, but note that $s\vee t$ is not bideterministic, so $s\vee t$ does not exist in ${\mathcal I}_2$.
\end{example}

\section{Boolean  restriction and birestriction semigroups}\label{s:brs}
\subsection{Boolean and preBoolean restriction semigroups with local units}\label{subs:brs}
Let $S$ be a restriction semigroup. Recall that an element $0$ of a semigroup is called a {\em left zero} (resp. a {\em right zero}) if $0s = 0$ (resp. $s0=0$) for all elements $s$ of this semigroup.
If $0$ is both a left and a right zero, it is called a {\em zero}. If a semigroup has a zero element, it is unique.

\begin{lemma} \label{lem:11} Suppose $S$ has a left zero element $0$, which is a projection. Then $0$ is the zero and the minimum element of $S$.
\end{lemma}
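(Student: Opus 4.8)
The plan is to first upgrade the left zero to a genuine two-sided zero, and then read off minimality directly from the definition of the natural partial order. The hypothesis gives me two immediate facts about $0$: since it is a projection, $0^*=0$ and $0$ is idempotent.

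The key step is to show that $0$ is also a \emph{right} zero, i.e.\ that $s0=0$ for every $s\in S$. Here I would invoke the ample identity \eqref{eq:ample_r}, which is available because $S$ is assumed restriction. Taking $e=0\in P(S)$ in \eqref{eq:ample_r} gives $0s = s(0s)^*$. Since $0$ is a left zero we have $0s=0$, and since $0$ is a projection we get $(0s)^*=0^*=0$; substituting both into the identity yields $0 = s\cdot 0 = s0$. Thus $0$ is a right zero, and being simultaneously a left and a right zero it is the zero element of $S$ (and, as noted just before the lemma, such an element is unique).

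For minimality with respect to $\leq$, I would simply unwind the definition of the natural partial order: $a\leq b$ means $a=be$ for some $e\in P(S)$. Having just shown $s0=0$ and knowing $0\in P(S)$, I take $e=0$ and observe $0 = s\cdot 0 = se$, so that $0\leq s$ for every $s\in S$. Hence $0$ is the minimum element of $S$.

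The entire argument rests on the single application of the ample identity \eqref{eq:ample_r} with $e=0$; once $0$ is known to be a two-sided zero, both the zero claim and minimality follow by direct substitution and by unwinding the definition of $\leq$. I therefore do not expect any genuine obstacle, the only subtlety being to recognize that it is precisely the restriction (ample) axiom, rather than the bare Ehresmann axioms, that converts a left zero projection into a two-sided zero.
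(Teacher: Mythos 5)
Your proof is correct and follows essentially the same route as the paper: both apply the identity \eqref{eq:ample_r} with $e=0$ to get $0=0s=s(0s)^*=s0^*=s0$, and then read off $0\leq s$ from $0=s0^*$. No issues.
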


\begin{proof}
Let $s\in S$. Then $0=0s = s(0s)^* = s0$, so $0$ is the zero.
If $s\in S$, we have $0=s0= s0^*$, so that $0\leq s$.
\end{proof}

A zero element which is a projection is called a {\em restriction zero} in \cite{CL07}, see also \cite{CL24}.

From now on we suppose that the assumption of Lemma \ref{lem:11} is satisfied.
We state our central definition.

\begin{definition} (Boolean and preBoolean restriction semigroups) \label{def:brs} 
Let $S$ be a restriction semigroup with a left zero $0$, which is a projection. It is called {\em Boolean}  if the following conditions are satisfied.
\begin{enumerate}
\item[(BR1)] For any $s,t\in S$ such that $s\smile t$, the join $s\vee t$ exists in $S$.
\item[(BR2)] $(P(S), \leq$) is a generalized Boolean algebra.
\item[(BR3)]  For any $s,t,u\in S$ such that $s\vee t$ exists we have $(s\vee t)u = su \vee tu$.
\end{enumerate}
We call $S$ {\em preBoolean} if it satisfies (BR2), (BR3) and the following relaxation of (BR1):
\begin{enumerate}
\item[(BR1')] for any $s,t\in S$ which have a common upper bound in $S$ their join $s\vee t$ exists in $S$. 
\end{enumerate}
\end{definition}

Let $S$ be a preBoolean restriction semigroup and $e,f\in P(S)$. 
By (BR2) the join of $e$ and $f$ exists in $P(S)$, denote this join by $g$, so that $e$ and $f$ have an upper bound $g$ in $S$. By (BR1') we have that $e\vee f$ exists in $S$. It follows from Lemma \ref{lem:joins}(1) that $e\vee f = g\in P(S)$. Furthermore, since $su, tu \leq (s\vee t)u$, it follows that $su, tu$ have an upper bound and (BR1') implies that the join $su\vee tu$ in (BR3) exists.

In view of Lemma \ref{lem:3}, in a Boolean restriction semigroup the join $s\vee t$ exists if and only if $s\smile t$ while in a preBoolean restriction semigroup elements $s$ and $t$ which are compatible but do not have an upper bound may not have a join.

\begin{remark}\label{rem:def1a}
It is not hard to show that Boolean restriction monoids (i.e., Boolean restriction semigroups which have the identity element $1$) coincide with one-object versions of classical restriction categories of \cite{CM09}.
\end{remark}

\begin{definition} (Local units) We say that a restriction semigroup has {\em local units} if for every $s\in S$ there is $e\in P(S)$ such that $es=s$.
Such an element $e$ is called a {\em left local unit} for $s$.
\end{definition}

Observe that every element $s$ of a restriction semigroup $S$ automatically has a {\em right local unit}, that is, there is $e\in P(S)$ such that $s=se$, one can simply take $e=s^*$.

\begin{remark} If $S$ is a Boolean restriction monoid, $P(S)$ is a unital Boolean algebra and $S$ automatically has  local units. The definition of a Boolean restriction semigroup above then reduces to (an equivalent form of) that of a Boolean  restriction monoid by Garner \cite[Definition 3.2]{G23b}. Hence, Boolean restriction semigroups with local units are a non-unital generalization of Garner's Boolean restriction monoids.
\end{remark}

Condition (BR3) says that the multiplication distributes over  joins from the right, but we now show that also the left distributivity holds (note that (BR3) is not used in the proof).

\begin{lemma}\label{lem:left_distr} Let $S$ be a preBoolean restriction semigroup. Then for any $s,t,u\in S$ where $s\vee t$ exists we have that  $us \vee ut$ exists and $us \vee ut = u(s\vee t)$.
\end{lemma}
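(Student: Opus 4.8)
The plan is to reduce the claimed equality to a single identity between projections in $P(S)$, where the generalized Boolean algebra structure can be exploited. First I would establish existence of $us\vee ut$: since $s,t\le s\vee t$, Lemma \ref{lem:1}(2) gives $us,ut\le u(s\vee t)$, so $us$ and $ut$ admit the common upper bound $u(s\vee t)$, and (BR1') guarantees that $us\vee ut$ exists. Being the least upper bound, it satisfies $us\vee ut\le u(s\vee t)$. Since for $a\le b$ with $a^*=b^*$ one has $a=ba^*=bb^*=b$, it now suffices to prove the single support identity $(us\vee ut)^*=(u(s\vee t))^*$.

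Then I would compute both sides. Writing $w=s\vee t$ and $p=(uw)^*$, from $s=ws^*$ (as $s\le w$) I get $us=uws^*$, whence \eqref{eq:rule1r} yields $(us)^*=(uw)^*s^*=ps^*$, and likewise $(ut)^*=pt^*$. By Lemma \ref{lem:joins}(2), $(us\vee ut)^*=(us)^*\vee(ut)^*=ps^*\vee pt^*$, the latter join existing in $P(S)$ by (BR2). The crux is the next step: working inside $P(S)$, where the product is the meet, distributivity of the generalized Boolean algebra (BR2) gives $ps^*\vee pt^*=p(s^*\vee t^*)$. Finally, $(uw)^*=(u^*w)^*$ by \eqref{eq:axioms_star} and $u^*w\le w$ force $p\le w^*=(s\vee t)^*=s^*\vee t^*$ (the last equality again by Lemma \ref{lem:joins}(2)), so that $p(s^*\vee t^*)=p$.

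Combining these, $(us\vee ut)^*=p=(u(s\vee t))^*$, and together with $us\vee ut\le u(s\vee t)$ this yields $us\vee ut=u(s\vee t)$, as required. I expect the main obstacle to be exactly the passage $ps^*\vee pt^*=p(s^*\vee t^*)$: unlike right distributivity (BR3), left distributivity is not a formal consequence of the Ehresmann/restriction axioms and genuinely relies on $P(S)$ being a distributive lattice, channelled through the support computation; the auxiliary identities $(us)^*=ps^*$ and $p\le s^*\vee t^*$ are precisely the glue that lets the semilattice distributivity do the work. Note that (BR3) is not invoked anywhere in this argument.
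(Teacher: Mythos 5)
Your proposal is correct and follows essentially the same route as the paper's proof: existence of $us\vee ut$ via (BR1'), reduction to the support identity $(us\vee ut)^*=(u(s\vee t))^*$ using Lemma \ref{lem:2}, the computation $(us)^*=(u(s\vee t))^*s^*$ from \eqref{eq:rule1r} and $s=(s\vee t)s^*$, the bound $(u(s\vee t))^*\le s^*\vee t^*$, and distributivity in the generalized Boolean algebra $P(S)$. Your closing observation that (BR3) is never used also matches the paper's own remark preceding the lemma.
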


\begin{proof} 
Observe that $us, ut \leq u(s\vee t)$ so that $us\vee ut$ exists by (BR1'). Moreover, $us\vee ut\leq u(s\vee t)$.
By Lemma \ref{lem:2} we are left to prove that
\begin{equation}\label{eq:o22a} (u(s\vee t))^* = (us)^*\vee (ut)^*.
\end{equation} We have 
\begin{align*}
(u(s\vee t))^* & = (u^*(s\vee t))^* &  (\text{by } \eqref{eq:axioms_star})\\
& \leq (s\vee t)^*\ & (\text{since }u^*(s\vee t) \leq s\vee t)\\
& = s^* \vee t^* & (\text{by Lemma } \ref{lem:joins}).
\end{align*}
It follows that 
$(u(s\vee t))^* = (u(s\vee t))^*(s^*\vee t^*)$.
Since projections form a generalized Boolean algebra, the latter rewrites to
$(u(s\vee t))^* = (u(s\vee t))^*s^*\vee (u(s\vee t))^*t^*$. Observe that $(u(s\vee t))^*s^* = (u(s\vee t)s^*)^* = (us)^*$ (where for the first equality we used \eqref{eq:rule1r} and for the second equality the fact that $s\leq s\vee t$ which implies that $s=(s\vee t)s^*$). Similarly, $(u(s\vee t))^*t^* = (ut)^*$, and \eqref{eq:o22a} follows.
\end{proof}

\begin{remark} \label{rem:ort}  We call elements $s,t\in S$ {\em orthogonal}, denoted $s \perp t$, if $st^* = ts^*=0$.\footnote{This notion corresponds to the notion of being {\em disjoint} in a restriction category, see \cite{CM09, CL24}.} 
Clearly $s\perp t$ implies $s\smile t$.
One can show that condition (BR1) in the definition of a Boolean restriction semigroup can be replaced by the weaker condition that $a\vee b$ exists whenever $a\perp b$, and condition (BR3) can be replaced by the condition that for any $s,t,u\in S$ where $s\perp t$ we have $(s\vee t)u = su \vee tu$. Analogous relaxations can be made also to the definition of a preBoolean restriction semigroup. We will not use these facts in the sequel and thus omit the details.
\end{remark}

\begin{lemma} Let $S$ be a preBoolean restriction semigroup and $s\in S$. The map $\varphi_s\colon P(S) \to P(S)$, $e \mapsto (es)^*$ preserves finite joins.
\end{lemma}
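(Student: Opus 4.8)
The claim is that $\varphi_s(e) = (es)^*$ preserves finite joins. Since the empty join is $0$ and $\varphi_s(0) = (0s)^* = 0^* = 0$, the empty case is immediate. By induction it therefore suffices to handle binary joins: I would prove that whenever $e \vee f$ exists in $P(S)$, we have $\varphi_s(e\vee f) = \varphi_s(e) \vee \varphi_s(f)$, i.e.
\begin{equation*}
((e\vee f)s)^* = (es)^* \vee (fs)^*.
\end{equation*}
Note that by (BR2) the join $e\vee f$ always exists in $P(S)$, and by Lemma \ref{lem:joins}(1) it equals the join in $S$, so the left-hand side is well defined; moreover $(es)^*, (fs)^*$ are projections with a common upper bound $(e\vee f)s)^*$, so their join exists by (BR1') and lands in $P(S)$.

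The natural route is to reduce the identity to an instance of the right-distributivity already available. First I would observe $es, fs \leq (e\vee f)s$, since $e,f \leq e\vee f$ gives $es = (e\vee f)s \cdot (es)^*$ by \eqref{eq:ample_r} style manipulation (more directly, $es = (e\vee f)es$ because $e = (e\vee f)e$, and $(e\vee f)es \leq (e\vee f)s$ by Lemma \ref{lem:1}(2)). Hence $es \vee fs$ exists by (BR1') and $es\vee fs \leq (e\vee f)s$. The key input is (BR3): applied to the elements $e, f$ (whose join exists) multiplied on the right by $s$, it yields $(e\vee f)s = es \vee fs$ directly. This is the crux — (BR3) gives the equality at the level of $S$, not merely an inequality.

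From $(e\vee f)s = es \vee fs$ I would then apply the support operation and use Lemma \ref{lem:joins}(2): since both $es \vee fs$ and $(es)^* \vee (fs)^*$ exist (the latter by the remark above), Lemma \ref{lem:joins}(2) gives
\begin{equation*}
((e\vee f)s)^* = (es \vee fs)^* = (es)^* \vee (fs)^*,
\end{equation*}
which is exactly $\varphi_s(e\vee f) = \varphi_s(e) \vee \varphi_s(f)$. The induction on the number of join factors then upgrades this to arbitrary non-empty finite joins, and combined with the empty-join case the proof is complete.

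I do not anticipate a serious obstacle: the only point requiring care is checking that all the joins invoked (the join $es\vee fs$ in $S$, and the join $(es)^*\vee (fs)^*$ in $P(S)$) genuinely exist before applying Lemma \ref{lem:joins}(2), which is where (BR1') and the common-upper-bound observation are needed. Everything else is a direct appeal to (BR3) and Lemma \ref{lem:joins}(2).
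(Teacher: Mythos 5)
Your argument is correct and is essentially the paper's own proof: both reduce the claim to the distributivity axiom, writing $(e\vee f)s = es\vee fs$, and then apply Lemma \ref{lem:joins}(2) to pass to supports, obtaining $((e\vee f)s)^* = (es)^*\vee(fs)^*$. Your additional care in verifying that the joins $es\vee fs$ and $(es)^*\vee(fs)^*$ exist via (BR1') and (BR2) is a welcome (if routine) elaboration of the step the paper leaves implicit.
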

\begin{proof} If $e,f\in P(S)$,
we have $\varphi_s(e\vee f) = (s(e\vee f))^*$. Applying distributivity and  Lemma \ref{lem:joins}(2), we rewrite the latter as $(se\vee sf)^* = (se)^*\vee (sf)^* = \varphi_s(e) \vee \varphi_s(f)$, and the claim follows.
\end{proof}

\subsection{Boolean birestriction semigroups}
\begin{definition} (Boolean birestriction semigroups)
    Let $S$ be a  birestriction semigroup with a left zero $0$, which is a projection. It is called {\em Boolean} if the following conditions are satisfied.
\begin{enumerate}
\item[(BBR1)] For any two elements $a,b\in S$ such that $a\asymp b$, the join $a\vee b$ exists in $S$.
\item[(BBR2)] $(P(S), \leq$) is a generalized Boolean algebra.
\end{enumerate}

We call $S$ {\em preBoolean} if it satisfies (BBR2), (BBR3) and the following relaxation of (BBR1):
\begin{enumerate}
\item[(BBR1')] for any $s,t\in S$ which have a common upper bound in $S$, their join $s\vee t$ exists in~$S$. 
\end{enumerate}
\end{definition}

Analogously to the proof of Lemma \ref{lem:left_distr} one can prove the following.

\begin{lemma}\label{lem:distr1} Let $S$ be a preBoolean birestriction semigroup and $s,t,u\in S$ be such that $s$ and $t$ have an upper bound. Then the joins $us\vee ut$ and $su \vee tu$ exist in $S$ and $u(s\vee t) = us\vee ut$, $(s\vee t)u = su\vee tu$.
\end{lemma}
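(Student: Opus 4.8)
The plan is to establish the two join-existence claims together with the two distributivity equalities by running the argument of Lemma~\ref{lem:left_distr} once through the restriction reduct $(S,\cdot,{}^*)$ and once, dually, through the corestriction reduct $(S,\cdot,{}^+)$. First, since $s$ and $t$ have a common upper bound, condition (BBR1') guarantees that $s\vee t$ exists. Recall that in a birestriction semigroup the natural partial orders $\leq$ and $\leq'$ coincide, so $s\vee t$ is simultaneously the least upper bound with respect to both orders; this is the feature that lets the two halves of the statement share the same join.

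For the equality $u(s\vee t)=us\vee ut$ I would argue exactly as in Lemma~\ref{lem:left_distr}. By Lemma~\ref{lem:1}(2) we have $us,ut\leq u(s\vee t)$, so $us$ and $ut$ have a common upper bound and (BBR1') yields the join $us\vee ut$, which satisfies $us\vee ut\leq u(s\vee t)$. By Lemma~\ref{lem:2} it then suffices to check $(u(s\vee t))^*=(us)^*\vee(ut)^*$, and this is obtained verbatim from the proof of Lemma~\ref{lem:left_distr}: one first shows $(u(s\vee t))^*\leq s^*\vee t^*$ using \eqref{eq:axioms_star}, then expands using that $P(S)$ is a generalized Boolean algebra (BBR2) together with \eqref{eq:rule1r} and the identities $s=(s\vee t)s^*$, $t=(s\vee t)t^*$.

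For the equality $(s\vee t)u=su\vee tu$ I would dualize this computation in the corestriction reduct. Using the dual of Lemma~\ref{lem:1}(2) we get $su,tu\leq(s\vee t)u$, so $su\vee tu$ exists by (BBR1') and lies below $(s\vee t)u$; since the two sides are comparable they are cocompatible (dual of Lemma~\ref{lem:3}), so by the dual of Lemma~\ref{lem:2} equality reduces to $((s\vee t)u)^+=(su)^+\vee(tu)^+$. This last identity is the left--right mirror of the one above: one shows $((s\vee t)u)^+\leq s^+\vee t^+$ via \eqref{eq:axioms_plus}, and then expands using (BBR2), \eqref{eq:rule1l} and the identities $s=s^+(s\vee t)$, $t=t^+(s\vee t)$, which hold because $s,t\leq'(s\vee t)$.

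The only genuinely new point compared with Lemma~\ref{lem:left_distr}, and hence the step to keep an eye on, is the interplay between $\leq$ and $\leq'$: one must invoke the coincidence of the two natural partial orders so that the single element $s\vee t$ serves as the relevant least upper bound in both the restriction and the corestriction arguments, and so that the joins $us\vee ut$ and $su\vee tu$ produced by (BBR1') are the same whether read in $(S,\cdot,{}^*)$ or in $(S,\cdot,{}^+)$. Once this is in place, both computations are routine duals of the proof of Lemma~\ref{lem:left_distr} and involve no further subtleties.
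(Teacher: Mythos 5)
Your proposal is correct and matches the paper's intent exactly: the paper offers no written proof beyond the remark that the lemma is proved ``analogously to the proof of Lemma~\ref{lem:left_distr}'', which is precisely your strategy of running that argument in the restriction reduct and its mirror image in the corestriction reduct, mediated by the coincidence of $\leq$ and $\leq'$. Your explicit flagging of that coincidence as the one genuinely new ingredient is accurate and, if anything, more careful than the paper's one-line justification.
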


\begin{lemma} Every Boolean birestriction semigroup is a preBoolean restriction semigroups with local units.
\end{lemma}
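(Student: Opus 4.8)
The plan is to show that a Boolean birestriction semigroup $S$, when its cosupport operation $^+$ is forgotten, satisfies every axiom in the definition of a preBoolean restriction semigroup with local units. First I would record that the reduct $(S,\cdot,{}^*)$ is a restriction semigroup by definition of birestriction semigroup, and that $0$ remains a left zero which is a projection (since $P(S) = \{s^+\} = \{s^*\}$, the zero is a projection for both operations simultaneously). Thus the standing hypothesis of Lemma~\ref{lem:11} holds and $0$ is the zero and minimum element. Condition (BBR2) is literally (BR2), so the projection semilattice $P(S)$ is already a generalized Boolean algebra.

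The substantive point is verifying (BR1'): whenever $s,t\in S$ have a common upper bound with respect to $\leq$, their join $s\vee t$ exists. Here I would exploit that in a birestriction semigroup the orders $\leq$ and $\leq'$ coincide (stated in the excerpt just before Lemma~\ref{lem:23}). So a common upper bound for $s,t$ with respect to $\leq$ is simultaneously a common upper bound with respect to $\leq'$. By Lemma~\ref{lem:3} and its cocompatibility dual, the existence of such a bound forces $s\smile t$ and $s\frown t$, hence $s\asymp t$. Then (BBR1) supplies the join $s\vee t$ in $S$, which is exactly what (BR1') demands. This is the one place where the argument genuinely uses the birestriction structure rather than just the restriction reduct, and I expect it to be the crux: the subtlety is that (BBR1) gives joins for bicompatible pairs, whereas (BR1') wants joins for pairs with a common upper bound, and reconciling these requires the coincidence $\leq\,=\,\leq'$ together with both directions of Lemma~\ref{lem:3}.

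Next I would handle (BR3), the right distributivity $(s\vee t)u = su\vee tu$ whenever $s\vee t$ exists. Since in a preBoolean setting the join $s\vee t$ existing entails (by Lemma~\ref{lem:3}) that $s,t$ have the common upper bound $s\vee t$, Lemma~\ref{lem:distr1} applies directly and yields both $u(s\vee t)=us\vee ut$ and $(s\vee t)u = su\vee tu$; the latter is precisely (BR3). Finally, for local units I would observe that every element $s$ of a birestriction semigroup has $s^+ \in P(S)$ with $s^+ s = s$ by the first identity of \eqref{eq:axioms_plus}, so $s^+$ serves as a left local unit for $s$. Assembling these four verifications—restriction reduct, (BR2), (BR1'), (BR3), and local units—completes the proof. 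The only real obstacle is the (BR1') step; everything else is either immediate from the definitions or a direct citation of the already-established Lemmas~\ref{lem:distr1} and \eqref{eq:axioms_plus}.
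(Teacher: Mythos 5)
Your proof is correct and follows essentially the same route as the paper: the crux is deriving (BR1') by using the coincidence of $\leq$ and $\leq'$ together with Lemma~\ref{lem:3} and its dual to get $s\asymp t$ from a common upper bound, then invoking (BBR1), and obtaining local units from $s^+s=s$. The paper's own proof is terser (it treats (BR2) and (BR3) as immediate), but your additional verifications via Lemma~\ref{lem:distr1} are consistent with what the paper leaves implicit.
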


\begin{proof}
We need to show (BR1'). Suppose that $s$ and $t$ have an upper bound in a Boolean birestriction semigroup $S$. Then $s\asymp t$ so that  $s\vee t$ exists by (BBR1). Since $s^+s=s=ss^*$, local units exist. \end{proof}

\begin{example} The symmetric inverse monoid ${\mathcal I}_n$ is a preBoolean restriction monoid and a Boolean birestriction monoid, but is not a Boolean restriction monoid (see Example~\ref{ex:23a}).
\end{example}

\subsection{Morphisms}\label{subs:morphisms_alg} The following definition is motivated by \cite[p. 464, 465]{KL17}.

\begin{definition}\label{def:morphisms1} (Morphisms between preBoolean restriction semigroups) Let $S$, $T$ be preBoolean restriction semigroups. 
A map $f\colon S\to T$ will be called a {\em morphism}, if the following conditions hold.
\begin{enumerate}
\item $f$ is a $(2,1)$-morphism, that is, it preserves the multiplication and the operation $^*$.
\item The restriction of $f$ to $P(S)$ is a proper morphism $f|_{P(S)}\colon P(S)\to P(T)$ between generalized Boolean algebras.
\end{enumerate}
We say that a morphism $f\colon S\to T$ is {\em proper} if for every $t\in T$ there is $n\geq 1$ and $t_1,\dots, t_n\in T$, $s_1,\dots, s_n\in S$, such that $t\leq t_1\vee \dots \vee t_n$ and $f(s_i)\geq t_i$ for all $i\in \{1,\dots,n\}$. We say that $f$ is {\em weakly-meet preserving} if for all $s,t\in S$ and $u\in T$ such that $u\leq f(s), f(t)$ there is $v\leq s,t$ such that $u\leq f(v)$.
\end{definition}

\begin{remark}\label{rem:meets}
 One can show (similarly to \cite[Lemma 8.16]{KL17}) that if $S$ and $T$ are preBoolean restriction semigroups with binary meets with respect to $\leq$, then a morphism $f\colon S\to T$ is weakly meet-preserving if and only if it is meet-preserving in that $f(s\wedge t) = f(s)\wedge f(t)$ for all $s,t\in S$. 
\end{remark}

Morphisms between preBoolean birestriction semigroups (including proper and weakly meet-preserving morphisms) are defined similarly with axiom (1) of Definition \ref{def:morphisms1} replaced by the requirement that $f$ is a $(2,1,1)$-morphism.

\section{\'Etale, co\'etale and bi\'etale topological categories} \label{s:cat_to_sem}
\subsection{Small categories}\label{subs:discr_cat} According to \cite{M71},
a {\em (discrete) small category} $C = (C_1,C_0,u,d,r,m)$ is given by a {\em set of arrows} $C_1$ and a {\em set of objects} $C_0$, together with the structure maps   
$$
u \colon C_0\to C_1, \quad d,r \colon C_1\to C_0, \quad m\colon C_2\to C_1,
$$
 called the {\em unit}, the {\em domain}, the {\em range}, and the {\em multiplication} maps, respectively,
where 
$$
C_2 = \{(x,y)\in C_1\times C_1\colon r(y)=d(x)\}
$$
is the {\em set of composable pairs.} For $(x,y)\in C_2$ we denote $m(x,y)$ simply by $xy$. If $x\in C_0$ we call $u(x)$ the {\em unit} at $x$ and sometimes denote it by $1_x$. The following conditions are required to hold:
\begin{align*}
& (DRU) & d(1_x) & = r (1_x) = x & (\text{domain and range of a unit})\\
& (DP) &  d(xy) & = d(y) & (\text{domain of a product})\\
& (RP) & r(xy) & = r(x) & (\text{range of a product})\\
& (A) & (xy)z & = x(yz) & (\text{associativity})\\
& (UL) & 1_{r(x)}x & = x1_{d(x)} = x & (\text{left and right unit laws})
\end{align*}
It follows form (DRU) that the map $u$ is injective, that is why $C_0$ is sometimes identified with $u(C_0)$, the {\em set of units.} All the categories we consider in the sequel are small. 

Let $C$ and $D$ be categories. 
A {\em functor} $f \colon C \to D$ is a pair of maps $f_1 \colon C_1\to D_1$ and $f_0 \colon C_0\to D_0$ 
that commute with the structure maps of the categories, that is, 
\begin{align*}
 d(f_1(x)) & =  f_0(d(x)), & \text{for all } x\in C_1,\\
r(f_1(x)) & =  f_0(r(x)), & \text{for all } x\in C_1,\\
f_1(xy) & = f_1(x)f_1(y), & \text{for all } (x,y)\in C_2,\\
1_{f_0(x)} & = f_1(1_x), & \text{for all } x\in C_0.
\end{align*}

\subsection{Involutive categories and groupoids} A  category $C$ which is equipped with the additional structure map $i\colon C_1\to C_1$, $a\mapsto a^{-1}$, is called {\em involutive}, if the following conditions hold:
\begin{align*}
d(a^{-1}) & = r(a),  &\text{for all } a\in C_1,\\
(a^{-1})^{-1} & = a, & \text{for all } a\in C_1,\\
(ab)^{-1} & = b^{-1}a^{-1},  & \text{for all } (a,b)\in C_2.
\end{align*}
An involutive category $C= (C_1,C_0,u,d,r,m,i)$ is called a {\em groupoid}, if, in addition, $aa^{-1} = 1_{r(a)}$ and $a^{-1}a = 1_{d(a)}$ for all $a\in C_1$.

\subsection{\'Etale topological categories}
By a {\em topolo\-gi\-cal cate\-gory} $C= (C_1,C_0,u,d,r,m)$ we mean an internal category \cite{M71} in the category of topological spaces. It is given by topological spaces $C_1$ of arrows and $C_0$ of objects, and all the structure maps $u,d,r,m$ are required to be continuous, where $C_2$ is endowed with the relative topology from $C_1\times C_1$.
In the definition of a {\em functor} $f\colon C\to D$ between topological categories it is additionally required that $f_1\colon C_1\to D_1$ and $f_0\colon C_0\to D_0$ are continuous. In the definition of a topological groupoid or a topological involutive category it is additionally required that the inversion map $i$ is continuous.

The following definition is inspired by \cite[Definition 3.1]{Exel08}, see also \cite[Definition 3.1]{St10} and \cite[I.2.8]{R80}.

\begin{definition} (\'Etale, co\'etale and bi\'etale topological categories)
Let $C$ be a topological category such that $C_0$ is a locally compact Hausdorff space. We call $C$  {\em \'etale} if the domain map $d\colon C_1\to C_0$ is a local homeomorphism (also called an {\em \'etale} map). {\em Co\'etale topological categories} are defined dually. We say that $C$ is {\em bi\'etale} if it is both \'etale and co\'etale.
\end{definition}

\begin{remark}\label{rem:et_groupoid} If $C$ is a topological groupoid, it is well known and easily seen that it is \'etale if and only if it is co\'etale if and only if it is bi\'etale.
\end{remark}

From now on we suppose that we are given an \'etale topological category $C$. For co\'etale categories dual statements hold. The following is an adaptation of the respective statement for \'etale groupoids \cite[Proposition 3.2]{Exel08}.

\begin{proposition}
Suppose that $C$ is an \'etale category. Then
the map $u$ is open.
\end{proposition}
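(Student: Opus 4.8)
The plan is to exploit that the unit map $u$ is a continuous section of the local homeomorphism $d$, and that continuous sections of local homeomorphisms are automatically open. First I would record the identity $d\circ u = \id_{C_0}$, which follows from (DRU): indeed $d(u(x)) = d(1_x) = x$ for every $x\in C_0$. Thus $u$ is a continuous right inverse of $d$.

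To prove openness, let $O\subseteq C_0$ be open and fix $x\in O$; I would produce an open neighbourhood of $u(x)$ contained in $u(O)$. Since $d$ is a local homeomorphism, there is an open set $W\subseteq C_1$ with $u(x)\in W$ such that $d|_W\colon W\to d(W)$ is a homeomorphism onto the open set $d(W)\subseteq C_0$. Because $u$ is continuous and $u(x)\in W$, the set $u^{-1}(W)$ is open in $C_0$ and contains $x$; intersecting it with $O$ and with $d(W)$ (both open and containing $x$, as $x=d(u(x))\in d(W)$), I obtain an open set $V_x$ with $x\in V_x\subseteq O\cap d(W)$ and $u(V_x)\subseteq W$.

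The key step is the identification $u|_{V_x} = (d|_W)^{-1}|_{V_x}$. For $v\in V_x$ we have $u(v)\in W$ and $d(u(v)) = v$; since $d|_W$ is injective, $u(v)$ is the unique point of $W$ whose image under $d$ is $v$, which is exactly $(d|_W)^{-1}(v)$. Consequently $u(V_x) = (d|_W)^{-1}(V_x)$ is the image of the open set $V_x\subseteq d(W)$ under the homeomorphism $(d|_W)^{-1}$, hence open in $W$ and therefore open in $C_1$.

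Finally I would assemble these local pieces. Since $V_x\subseteq O$ we have $u(V_x)\subseteq u(O)$, while $u(x)\in u(V_x)$; letting $x$ range over $O$ gives $u(O) = \bigcup_{x\in O} u(V_x)$, a union of open sets, so $u(O)$ is open. The only subtle point is matching the section $u$ with the local inverse of $d$ in the third step, but this is immediate from injectivity of $d$ on $W$, and the rest is a routine use of continuity together with the defining property of a local homeomorphism. Specializing to $O = C_0$ also shows that the set of units $u(C_0)$ is open in $C_1$.
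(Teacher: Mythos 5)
Your proof is correct and uses essentially the same argument as the paper: both exploit continuity of $u$, the identity $d\circ u=\id_{C_0}$, and injectivity of $d$ on a slice around a unit (your set $V_x=O\cap d(W)\cap u^{-1}(W)$ plays the role of the paper's $B'=B\cap u^{-1}(A)$). The only cosmetic difference is that the paper first reduces to showing $u(C_0)$ is open via $u(A)=d^{-1}(A)\cap u(C_0)$, whereas you prove openness of $u(O)$ directly by identifying $u$ locally with the inverse of $d|_W$.
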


\begin{proof}
Let $A\subseteq C_0$ be an open set. We show that $u(A)$ is also open. Since $u(A) = d^{-1}(A) \cap u(C_0)$, it suffices to show that $u(C_0)$ is open. So take an arbitrary $x\in C_0$ and show that $1_x \in u(C_0)$ has a neighborhood which is contained in $u(C_0)$. Since $d$ is a local homeomorphism, there are open sets $A\subseteq C_1$ and $B\subseteq C_0$ such that $1_x\in A$, $x\in B$ and $d\colon A\to B$ is a homeomorphism (where $A$ and $B$ are considered with respect to the relative topology). Then the set $B'=B\cap u^{-1}(A)$ is open in $B$ which implies that $A'=d^{-1}(B')\cap A$ is open in $A$. Observe that $1_x\in A'$ and show that $A' \subseteq u(C_0)$. Suppose $s\in A'$. Then $d(s)\in B'$, so that $1_{d(s)}\in A$. Then $s,1_{d(s)}\in A$ and $d(s) = d(1_{d(s)})$. Since the restriction of $d$ to $A$ is bijective, we conclude that $s=1_{d(s)}\in u(C_0)$, as needed.
\end{proof}

\begin{corollary}\label{cor:u_homeom}
The map $u\colon C_0\to u(C_0)$ is an open and continuous bijection, i.e., a homeomorphism.  
\end{corollary}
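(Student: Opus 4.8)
The final statement is Corollary \ref{cor:u_homeom}, which asserts that $u\colon C_0\to u(C_0)$ is a homeomorphism.

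The plan is to deduce this immediately from the preceding Proposition, which establishes that $u$ is an open map, together with the general categorical facts already recorded in the excerpt. First I would recall that continuity of $u$ is built into the definition of a topological category, since all structure maps including $u$ are required to be continuous. Next I would observe that injectivity of $u$ was already noted right after the axioms of a small category: condition (DRU) gives $d(1_x)=x$, so that $d\circ u=\mathrm{id}_{C_0}$, whence $u$ is injective. Therefore the corestriction $u\colon C_0\to u(C_0)$ is a continuous bijection onto its image $u(C_0)$, equipped with the subspace topology.

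It then remains only to verify that this corestricted map is open, i.e.\ that it sends open subsets of $C_0$ to subsets of $u(C_0)$ that are open \emph{in the subspace topology}. This is where the Proposition does the work: for an open set $A\subseteq C_0$, the Proposition shows $u(A)$ is open in $C_1$, and a set that is open in $C_1$ and contained in $u(C_0)$ is a fortiori open in the subspace $u(C_0)$. Combining a continuous open bijection with the standard fact that such a map is a homeomorphism completes the argument.

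I do not anticipate any genuine obstacle here; the corollary is a formal consequence packaging together injectivity (from (DRU)), continuity (from the definition of topological category), and openness (from the Proposition). The only point requiring a word of care is the distinction between $u(A)$ being open in $C_1$ versus open in the subspace $u(C_0)$, but the implication from the former to the latter is immediate since $u(A)\subseteq u(C_0)$. Thus the proof is a one-line invocation of the Proposition and the elementary characterization of homeomorphisms as continuous open bijections.
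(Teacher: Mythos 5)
Your argument is correct and is exactly the packaging the paper intends: continuity from the definition of a topological category, injectivity via $d\circ u=\mathrm{id}_{C_0}$ from (DRU), and openness onto the subspace $u(C_0)$ from the preceding proposition. The paper leaves the corollary without an explicit proof precisely because it is this immediate combination, so your write-up matches the intended route.
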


\begin{definition} (Local sections, local cosections and local bisections)  Let $C$ be a category.
A subset $A\subseteq C_1$ will be called a {\em local section} if the restriction of the map $d$ to $A$ is injective. {\em Local cosections} are defined dually. If $A$ is a local section and a  local cosection, we call it a {\em local bisection}.
\end{definition}

\begin{definition} (Slices, coslices and bislices) An open local section of a topological category will be called a {\em slice}\footnote{These definitions are motivated by \cite[Definition 3.3]{St10}, but beware that slices of \cite{St10} are what we call bislices, while our slices and coslices are not treated in \cite{St10}.}. {\em Coslices} are defined dually. A {\em bislice} is a subset which is both a slice and a coslice.
\end{definition}

In what follows we work with slices and bislices of \'etale categories. Dual statements hold in co\'etale categories with slices (resp. local sections) replaced by coslices (resp. local cosections). Adopting the notation of \cite{St10}, by $C^{op}$ we denote the sets of all  slices  of a topological category $C$. We denote $\widetilde{C}^{op}$ to be the set of all bislices of $C$. 

Note that a subset of a local section is itself a  local section and an open subset of a slice is itself a slice. 
The following statement is proved similarly to \cite[Proposition 3.5]{Exel08}.

\begin{lemma}\label{lem:basis} \mbox{}
\begin{enumerate}
\item Let $C$ be an \'etale category.
The collection of all slices forms a basis of the topology on $C_1$.
\item Let $C$ be a bi\'etale category. The collection of all bislices forms a basis of the topology on $C_1$.
\end{enumerate}
\end{lemma}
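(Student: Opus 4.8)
The statement to prove is Lemma~\ref{lem:basis}, which asserts that slices form a basis of the topology on $C_1$ for an \'etale category, and bislices form a basis when $C$ is bi\'etale.

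The plan is to prove part (1) directly from the definition of a local homeomorphism and then deduce part (2) by intersecting the bases of slices and coslices. For part (1), I would argue as follows. Since $d\colon C_1\to C_0$ is a local homeomorphism, every point $x\in C_1$ has an open neighborhood $U$ such that $d$ restricts to a homeomorphism $d|_U\colon U\to d(U)$ with $d(U)$ open in $C_0$. In particular $d|_U$ is injective, so $U$ is a local section, and being open it is a slice. Thus every point of $C_1$ lies in some slice. To see that slices form a basis, I must check the second basis condition: given an arbitrary open set $W\subseteq C_1$ and a point $x\in W$, I need a slice $A$ with $x\in A\subseteq W$. Taking the slice $U$ constructed above, the set $A=U\cap W$ is open; since it is an open subset of the slice $U$, the remark preceding the lemma (``an open subset of a slice is itself a slice'') guarantees $A$ is a slice, and clearly $x\in A\subseteq W$. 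This completes part (1).

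For part (2), I would exploit that in a bi\'etale category both $d$ and $r$ are local homeomorphisms, so by part~(1) applied to $d$ the slices form a basis, and dually (applying part~(1) to the co\'etale structure with $r$ in place of $d$) the coslices form a basis. Given an open $W\subseteq C_1$ and a point $x\in W$, I would first choose a slice $A$ with $x\in A\subseteq W$, then choose a coslice $B$ with $x\in B\subseteq A$. The intersection $A\cap B$ is open and is simultaneously an open subset of a slice and of a coslice; invoking again the fact that an open subset of a slice is a slice (and dually for coslices), $A\cap B$ is both a slice and a coslice, hence a bislice, with $x\in A\cap B\subseteq W$. This shows bislices form a basis.

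The key technical inputs are Corollary~\ref{cor:u_homeom} is not actually needed here; the real engine is simply the definition of a local homeomorphism together with the elementary observation (already granted in the text just before the lemma) that open subsets of slices remain slices. I do not anticipate a genuine obstacle: the only mild subtlety is making sure to verify the correct formulation of ``basis'' (every point lies in a basis element, and intersections are handled pointwise), and in part~(2) being careful that intersecting a slice with a coslice yields a set that inherits both properties, which follows because restriction of injectivity of $d$ (resp.\ $r$) to a subset is automatic. The argument is parallel to \cite[Proposition 3.5]{Exel08}, as the statement already notes.
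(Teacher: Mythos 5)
Your proof is correct and follows essentially the same route as the paper: part (1) shrinks the local-homeomorphism neighbourhood of a point by intersecting with the given open set, and part (2) refines a slice by a coslice and uses that a coslice contained in a slice is a bislice. The only cosmetic difference is that you write $A\cap B$ where $B\subseteq A$ already, so the intersection is just $B$; the underlying argument is identical.
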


\begin{proof}
(1) Let $V\subseteq C_1$ be an open set and $x\in V$. Since $d$ is a local homeomorphism, there is an open neighborhood $A\subseteq C_1$ of $x$ which is homeomorphic to $d(A)$ via $d$. It follows that $U=V\cap A$ is a slice contained in $V$ and containing $x$. 

(2) By part (1) and its dual statement every slice $V$ is a union of coslices. Since a coslice contained in a slice is a bislice, the statement follows.
\end{proof}

\begin{lemma}\label{lem:loc_sec}
Let $A,B$ be local sections of a category $C$. Then $AB$ is a local section, too.    
\end{lemma}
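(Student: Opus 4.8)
The plan is to show that the product of two local sections is again a local section, i.e. that the restriction of the domain map $d$ to $AB$ is injective. Recall that for composable arrows the rule $(DP)$ gives $d(xy)=d(y)$, so the domain of a product is governed entirely by the second factor. This suggests that the injectivity of $d$ on $AB$ should follow from the injectivity of $d$ on $B$ together with the injectivity of $d$ on $A$, the latter used to pin down the first factor once the second is fixed.

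Concretely, I would take two elements of $AB$, say $a_1b_1$ and $a_2b_2$ with $a_1,a_2\in A$, $b_1,b_2\in B$, and assume $d(a_1b_1)=d(a_2b_2)$. By $(DP)$ this says $d(b_1)=d(b_2)$, and since $A$ is a local section (so $d$ is injective on $B$—here using that $B$ is a local section), we conclude $b_1=b_2$. Now I would like to deduce $a_1=a_2$; the natural idea is to use injectivity of $d$ on $A$, which requires showing $d(a_1)=d(a_2)$. For this the composability condition is key: since $a_ib_i\in C_2$ we have $d(a_i)=r(b_i)$, and as $b_1=b_2$ this yields $d(a_1)=r(b_1)=r(b_2)=d(a_2)$. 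Injectivity of $d$ on $A$ then gives $a_1=a_2$, and hence $a_1b_1=a_2b_2$.

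The main conceptual point—and the only place where one must be slightly careful—is recovering $d(a_1)=d(a_2)$ from $b_1=b_2$: this is exactly where the definition $C_2=\{(x,y)\colon r(y)=d(x)\}$ of composable pairs is used, tying the domain of the first factor to the range of the (already matched) second factor. Everything else is a direct unwinding of the category axioms $(DP)$ and the definition of a local section, so I do not anticipate any real obstacle; the argument is short and purely combinatorial, with no topology involved (the statement concerns an arbitrary category $C$, not a topological one).
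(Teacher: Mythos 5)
Your proof is correct and is essentially identical to the paper's: both use $(DP)$ to reduce $d(a_1b_1)=d(a_2b_2)$ to $d(b_1)=d(b_2)$, conclude $b_1=b_2$ from $B$ being a local section, and then use the composability condition $d(a_i)=r(b_i)$ to get $d(a_1)=d(a_2)$ and hence $a_1=a_2$. (Only a typographical slip: you wrote ``since $A$ is a local section'' where you meant $B$, but your parenthetical already corrects this.)
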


\begin{proof}
Let $s,t\in AB$ be such that $d(s) = d(t)$. Suppose that $s=a_1b_1$ and $t=a_2b_2$ where $(a_1,b_1), (a_2,b_2)\in (A\times B)\cap C_2$.
Then $d(b_1) = d(s) = d(t) = d(b_2)$. It follows that $b_1=b_2$, as $B$ is a  local section. Hence $d(a_1) = r(b_1) = r(b_2) = d(a_2)$, which similarly yields $a_1=a_2$. Hence $s=t$ and $AB$ is a  local section, as needed.    
\end{proof}

The next proposition is inspired by
\cite[Proposition 2.4]{B23}\footnote{I gratefully acknowledge Mark Lawson for pointing me towards \cite[Proposition 2.4]{B23} and communicating to me a sketch of the proof of Proposition \ref{prop:mult_open} due to Tristan Bice.}.

\begin{proposition} \label{prop:mult_open} Let $C$ be an \'etale category. Then the multiplication map $m$ is open.
\end{proposition}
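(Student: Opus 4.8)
The plan is to show that $m$ maps basic open sets to open sets, and by Lemma~\ref{lem:basis}(1) it suffices to work with slices. So I would take two slices $A, B \subseteq C_1$ and prove that the product set $AB = \{ab : (a,b) \in (A \times B) \cap C_2\}$ is open in $C_1$. By Lemma~\ref{lem:loc_sec}, $AB$ is already a local section, so the real content is openness, and then being an open local section makes $AB$ itself a slice, which is the natural target.

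First I would fix an arbitrary point $s = a_0 b_0 \in AB$ with $(a_0, b_0) \in (A \times B) \cap C_2$, and produce an open neighbourhood of $s$ contained in $AB$. The key tool is that $d$ is a local homeomorphism: I would shrink $A$ and $B$ (using that an open subset of a slice is a slice, and that $d$ restricts to a homeomorphism onto its image on a small enough open piece) so that $d|_A$ and $d|_B$ are homeomorphisms onto open subsets $d(A), d(B) \subseteq C_0$. The idea is then to parametrise nearby products: a point $s'$ near $s$ should be writable as $a'b'$ where $a' \in A$ and $b' \in B$ vary continuously with $s'$. Concretely, from $s'$ I recover $b'$ as the unique element of $B$ with $d(b') = d(s')$ (using $d(ab) = d(b)$ from axiom (DP)), provided $d(s') \in d(B)$; this is a continuous assignment because $d|_B$ is a homeomorphism. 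Then I must recover $a'$ and check $a' b' = s'$.

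The hard part will be the passage from $b'$ back to $a'$ and verifying that the reconstructed product actually equals $s'$ rather than merely lying in $AB$. The natural device is to use a continuous ``right-division'' operation built from the local homeomorphism structure: the map $s' \mapsto b'$ is continuous, so I would try to exhibit the candidate neighbourhood as the intersection of $d^{-1}(d(B))$ (ensuring $b'$ exists) with a preimage guaranteeing $a'$ lands in $A$ and composability holds. The subtlety is that in a mere category (not a groupoid) there is no inversion map, so I cannot simply write $a' = s'(b')^{-1}$; instead I must argue openness more carefully, for instance by considering the continuous map $(a,b) \mapsto ab$ on the open set $(A \times B) \cap C_2$ and showing it is an open embedding onto $AB$ by constructing a continuous inverse $s' \mapsto (a', b')$ on a neighbourhood. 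Establishing continuity and well-definedness of this inverse, and confirming that its image is a full neighbourhood of $s$ in $C_1$ (not just in $AB$), is where the argument must be handled with care.

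Once openness of $AB$ is secured for slices $A, B$, the general case follows: an arbitrary open $U \subseteq C_1 \times C_1$ meets $C_2$ in an open set covered by products of slices (since slices form a basis), $m$ of each such basic piece is open by the above, and $m(U \cap C_2)$ is a union of these open images, hence open. I expect the reconstruction-of-$a'$ step to be the main obstacle, and I anticipate the author resolves it by an explicit local inverse to $m$ assembled from the homeomorphism $d|_B$ together with the étale structure near $a_0$.
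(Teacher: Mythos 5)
Your setup is the same as the paper's (reduce to slices via Lemma~\ref{lem:basis}, note $AB$ is a local section by Lemma~\ref{lem:loc_sec}, reduce the general case to products of slices at the end), and your instinct to parametrise $AB$ continuously over $d(AB)$ is exactly right. But the proof has a genuine gap at precisely the point you flag yourself: you never actually establish that $AB$ contains a $C_1$-neighbourhood of a given $s\in AB$. Saying you will "construct a continuous inverse $s'\mapsto(a',b')$ on a neighbourhood" and then "confirm that its image is a full neighbourhood of $s$ in $C_1$" is circular — that confirmation \emph{is} the openness you are trying to prove, and no mechanism for it is supplied. There is also a smaller unaddressed issue in the reconstruction of $a'$: without first normalising $B$, the range $r(b')$ need not land in $d(A)$, so "the unique $a'\in A$ composable with $b'$" is not well defined.

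The paper closes both gaps as follows. First it replaces $B$ by $B'=r^{-1}(d(A))\cap B$, which does not change $AB$ but guarantees $r(B)\subseteq d(A)$; then $a'$ is recovered as $\alpha_A(r(b'))$ where $\alpha_A=(d|_A)^{-1}$, and one gets a continuous map $\alpha\colon d(AB)\to C_1$ with $d\circ\alpha=\mathrm{id}$ whose image is $AB$ (a section of $d$ over $d(AB)$, built from $\alpha_B$, $r$, $\alpha_A$ and $m$). The decisive step — the one missing from your argument — is then: given $s\in AB$, choose any slice $O$ containing $s$ (possible by Lemma~\ref{lem:basis}) and consider the open set $O\cap d^{-1}(\alpha^{-1}(O))$. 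It contains $s$, and for any $x$ in it, both $x$ and $\alpha(d(x))$ lie in the slice $O$ and have the same image under $d$, so injectivity of $d|_O$ forces $x=\alpha(d(x))\in AB$. It is this use of an ambient slice through $s$ to upgrade "same $d$-value" to "equal", substituting for the unavailable inversion $a'=s'(b')^{-1}$, that your proposal anticipates but does not supply.
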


\begin{proof}
An open set in $C_2$ has the form $O(A,B)=(A\times B)\cap C_2$ where $A,B$ are open in $C_1$. Since $m(O(A,B)) = AB$ and by Lemma \ref{lem:basis}, it suffices to assume that $A$ and $B$ are  slices and prove that $AB$ is open. Note that $AB = A B'$ where $B'=r^{-1}d(A)\cap B$ is a  slice contained in $B$ and such that $r(B')\subseteq d(A)$. So we can assume that $A$ and $B$ are such that $r(B)\subseteq d(A)$ and we prove that $AB$ is open. 

The assumption implies that $d(AB) = d(B)$ and by Lemma \ref{lem:loc_sec} $AB$ is a local section. 
Since $A$ is a slice and a local homeomorphism is an open map, the map $d_A \colon A\to d(A)$, $x\mapsto d(x)$, is a homeomorphism, and so is the map $d_B\colon B\to d(B)$, $x\mapsto d(x)$. Let $\alpha_A\colon d(A)\to A$ and $\alpha_B\colon d(B)\to B$ be their inverse homeomorphisms.
Let $\beta_B\colon B\to d(A)$ be the map given by $\beta_B(x) = r(x)$. Then for $X\subseteq d(A)$ such that $X$ is open in the relative topology of $d(A)$ we have that $X$ is open in $C_0$, so that the set $\beta_B^{-1}(X) = r^{-1}(X) \cap B$ is open in $B$, as $r$ is continuous and $B$ is open. Hence $\beta_B$ is continuous.

The map $m|_{(A\times B)\cap C_2}$, being the restriction of the map $m$ to the open set $(A\times B)\cap C_2$, is continuous. Then the map $\alpha = m|_{(A\times B)\cap C_2} \circ (\alpha_A\circ \beta_B \circ \alpha_B, \alpha_B)\colon d(AB) \to C_1$, which sends $x\in d(AB)$ to the only element $y\in AB$ with $d(y)=x$, is continuous as a composition of continuous maps, and we have $d\alpha (x)= x$ for all $x\in d(AB)$. Let $s\in AB$. Since $d$ is a local homeomorphism and by Lemma \ref{lem:basis}, there is a slice $O$ containing $s$.  It follows that $O\cap d^{-1}\alpha^{-1}(O)$ is a slice. Observe that $O\cap d^{-1}\alpha^{-1}(O) \subseteq AB$. Indeed, if $x\in O \cap d^{-1}\alpha^{-1}(O)$, we have $x=\alpha d(y)$ for some $y\in O\cap AB$ where $d(x)= d(y)$. It follows that $x=y$, as $O$ is a slice. Hence $O \cap d^{-1}\alpha^{-1}(O)$ is a neighborhood of $s$, contained in $AB$. This proves that $AB$ is open.
\end{proof}

\begin{lemma} \label{lem:ud} Suppose that the category $C$ is  \'etale and $A\subseteq C_1$ is a slice.
Then $A$ is homeomorphic to $1_{d(A)}$.
\end{lemma}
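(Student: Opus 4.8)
The plan is to exhibit the homeomorphism $A \to 1_{d(A)}$ as a composite of two homeomorphisms factoring through $d(A)\subseteq C_0$, namely $d$ restricted to $A$ followed by the unit map $u$ restricted to $d(A)$. Here $1_{d(A)}$ is understood as $u(d(A))$ with the relative topology inherited from $C_1$.

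First I would argue that the restriction $d_A\colon A \to d(A)$, $x\mapsto d(x)$, is a homeomorphism. Since $A$ is a slice, $d_A$ is injective by definition of a local section and surjective onto $d(A)$ by definition of $d(A)$; it is continuous as a restriction of the continuous map $d$. For openness, I would use that $C$ is \'etale, so $d$ is a local homeomorphism and in particular an open map: if $U\subseteq A$ is open in $A$, then $U$ is open in $C_1$ because $A$ is open, hence $d(U)$ is open in $C_0$ and therefore open in $d(A)$ in the relative topology. Thus $d_A$ is a continuous open bijection, i.e.\ a homeomorphism. (This is the same observation already invoked in the proof of Proposition \ref{prop:mult_open}.)

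Next I would invoke Corollary \ref{cor:u_homeom}, which states that $u\colon C_0 \to u(C_0)$ is a homeomorphism. Restricting a homeomorphism to a subspace yields a homeomorphism onto its image with the relative topologies, so the restriction $u_{d(A)}\colon d(A) \to u(d(A)) = 1_{d(A)}$ is again a homeomorphism.

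Finally I would compose: the map $u_{d(A)}\circ d_A \colon A \to 1_{d(A)}$, being a composite of two homeomorphisms, is itself a homeomorphism, which is exactly what is required. I do not expect any genuine obstacle here; the only point demanding a little care is the consistent use of relative topologies—specifically, verifying that openness of $d$ as a map into $C_0$ transfers to openness of $d_A$ as a map onto the subspace $d(A)$, and that restricting the global homeomorphism $u$ of Corollary \ref{cor:u_homeom} preserves the homeomorphism property onto the subspace $1_{d(A)}\subseteq C_1$.
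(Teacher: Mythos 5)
Your proof is correct and follows essentially the same route as the paper, which simply notes that the map in question is a bijective, open, continuous map and hence a homeomorphism; your factorization through $d(A)$ via $d_A$ and the restriction of $u$ (using Corollary \ref{cor:u_homeom}) just spells out that observation in more detail. No gaps.
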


\begin{proof} The statement follows from the fact that a bijective open and continuous map is a homeomorphism.
\end{proof}

Let $C$ be an \'etale category. If $A,B\in C^{op}$, we put
$$
AB = \{ab\colon a\in A, b\in B, (a,b)\in C_2\} \, \text{ and }\,  A^* = ud(A) = 1_{d(A)}.
$$ 

The following statement is inspired by \cite[Proposition 3.4]{St10}.

\begin{proposition} \label{prop:restriction} Let $C$ be an \'etale category. 
The set $C^{op}$ is closed with respect to the multiplication and the unary operation $^*$. In addition, $u(C_0)$ is a slice, and $C^{op}$ with respect to the operations of the multiplication and $^*$ forms a restriction monoid with the unit $u(C_0)$.
\end{proposition}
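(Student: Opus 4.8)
The plan is to verify, in turn, that $C^{op}$ is closed under the two operations, that $u(C_0)$ is a slice serving as a unit, and that the Ehresmann and restriction axioms \eqref{eq:axioms_star} and \eqref{eq:axioms_star_restr} hold. For closure under multiplication, I would take slices $A,B\in C^{op}$ and show $AB$ is again a slice. That $AB$ is a local section is exactly Lemma \ref{lem:loc_sec}, and that $AB$ is open follows from Proposition \ref{prop:mult_open} (the multiplication map $m$ is open) applied to the open set $O(A,B)=(A\times B)\cap C_2$, since $m(O(A,B))=AB$. For closure under $^*$, I note $A^*=ud(A)$; since $d$ is continuous and (as $A$ is a slice) the restriction $d|_A$ is open onto its image, $d(A)$ is open in $C_0$, whence $u(d(A))$ is open in $C_1$ because $u$ is an open map (by the Proposition preceding Corollary \ref{cor:u_homeom}). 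Thus $A^*\subseteq u(C_0)$ is open, and it is trivially a local section since $d$ is injective on units by $(DRU)$; so $A^*\in C^{op}$.

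Next I would check that $u(C_0)$ is a slice: it is open (again since $u$ is open, applied to $C_0$ itself), and $d$ restricted to $u(C_0)$ is injective because $d(1_x)=x$ by $(DRU)$, so $u(C_0)$ is a local section. To see $u(C_0)$ is the identity for the multiplication of slices, I would compute $u(C_0)\cdot A$ and $A\cdot u(C_0)$ using the unit laws $(UL)$: for $a\in A$ we have $1_{r(a)}a=a$ and $a1_{d(a)}=a$, and since $1_{r(a)},1_{d(a)}\in u(C_0)$, every element of $A$ lies in both products; conversely any composable product of a unit with an element of $A$ returns an element of $A$. Hence $u(C_0)A=Au(C_0)=A$.

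It then remains to verify the identities. For \eqref{eq:axioms_star}, the equality $AA^*=A$ follows from $A\cdot ud(A)=A$, which is the right unit law restricted to the units $1_{d(a)}$ lying over $d(A)$; commutativity and idempotency of the $A^*$'s, i.e. $A^*B^*=B^*A^*=(A^*B^*)^*$, reduces to the statement that $ud(A)$ and $ud(B)$ are subsets of $u(C_0)$ whose product is $u(d(A)\cap d(B))$, which is symmetric in $A,B$ and equal to its own $^*$. The identity $(AB)^*=(A^*B)^*$ I would check by unwinding both sides to $u$ of the appropriate subset of $C_0$, using $(DP)$ that $d(ab)=d(b)$. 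Finally, for the restriction axiom \eqref{eq:axioms_star_restr}, $A^*B=B(AB)^*$, I would translate it into set language: on the left, restricting $B$ to those $b$ with $d(b)\in d(A)$; on the right, $B$ restricted via the support of $AB$, and then show these coincide using $(DP)$ and the local-section property of $B$.

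The main obstacle I expect is not any single deep fact but the care needed in the set-theoretic bookkeeping of composable products when restricting to subsets: one must consistently track the composability condition $r(y)=d(x)$ and use the local-section/injectivity hypotheses to identify elements, so that equalities like $A^*B=B(AB)^*$ become genuine set equalities rather than mere inclusions. The essential topological inputs (openness of $m$ and of $u$) are already established, so the burden is purely algebraic and combinatorial; the delicate point is \eqref{eq:axioms_star_restr}, where I must show that premultiplying $B$ by the projection $A^*$ selects exactly the same arrows as the right-hand expression, which hinges on $d$ being injective on the slice $B$.
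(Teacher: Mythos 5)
Your proposal is correct and follows essentially the same route as the paper: closure under multiplication via Lemma \ref{lem:loc_sec} and Proposition \ref{prop:mult_open}, openness of $A^*=ud(A)$ and of $u(C_0)$ from $d$ and $u$ being open maps, the unit laws from (UL), and the Ehresmann and restriction identities by unwinding both sides to explicit subsets using (DP) and injectivity of $d$ on slices. The only cosmetic point is that $d(A)$ is open in $C_0$ because $d$ itself is an open map (being a local homeomorphism), not merely because $d|_A$ is open onto its image; this does not affect the argument.
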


\begin{proof}
Let $A,B\in C^{op}$. By Lemma \ref{lem:loc_sec} we have that $AB$ is a local section and because the multiplication map is open, it follows that $AB\in C^{op}$. 

As $du = id_{C_0}$, we have that $d(a) = d(b)$ if and only if $d(ud(a)) = d (ud(b))$. It follows that for $A\in C^{op}$ the map $d\colon ud(A)\to d(A)$ is injective. Moreover, as $d$ and $u$ are open maps, $A^* = ud(A)$ is open, hence a slice. Since $u(C_0)$ is open and  $u(a) = u(b)$ implies $a=du(a) = du(b) =b$, it follows that $u(C_0)$ a slice.

We now verify the axioms of a restriction monoid. Associativity of the multiplication easily follows from the associativity of the multiplication in the category $C$. Clearly, $u(C_0)$ is a unit element of $C^{op}$. Furthermore, if $A\in C^{op}$, we have $AA^* = \{a 1_{d(a)} \colon a\in A\} = A$, applying axiom (UL). Since $A^*B^* = 1_{d(A)}1_{d(B)} = 1_{d(A)\cap d(B)} = 1_{d(B)}1_{d(A)}$, the identity $A^*B^* = B^*A^*$ holds.

Let $A,B\in C^{op}$, we need to show that $(AB)^* = (A^*B)^*$. Note that $(AB)^*$ is the set of all $ud(ab)$ where $(a,b)$ runs through $(A\times B) \cap C_2$. But for such an $(a,b)$ we have $d(ab) = d(1_{d(a)}b)$. Since  $\{1_{d(a)}b\colon (a,b)\in (A\times B) \cap C_2\} = A^*B$, the needed equality follows.

Finally we show the identity $A^*B = B(AB)^*$. As noted above, we have $A^*B = \{ 1_{d(a)}b \colon (a,b)\in (A\times B)\cap C_2\} = \{b\in B \colon r(b) \in d(A)\}.$ On the other hand, $(AB)^* = \{1_{d(ab)} \colon (a,b)\in (A\times B)\cap C_2\} = \{1_{d(b)} \colon r(b)\in d(A)\}$. Since the map $B\to B^*$, $b\mapsto 1_{d(b)}$, is bijective, it follows that $B(AB)^* = \{b\in B \colon r(b) \in d(A)\} = A^*B$, as needed.
\end{proof}

\begin{remark}\label{rem:sp}
It is easy to see that projections of $C^{op}$ are precisely the slices contained in $u(C^0)$ which are precisely the opens of $u(C^0)$. It follows that they form a spatial frame. Moreover, any compatible family of elements of $C^{op}$ has a join. Hence it is natural to call $C^{op}$ a {\em spatial complete restriction monoid.} 
\end{remark}

\section{Ample topological categories and their Boolean restriction semigroups}\label{s:ample}

\subsection{Ample, coample and biample topological categories}
The following definition is motivated by \cite[Definition 3.5, Proposition 3.6]{St10}.

\begin{definition} (Ample, coample and biample categories)
An \'etale category will be called {\em ample} if compact slices form a basis of the topology of $C_1$. {\em Coample} categories are defined dually. A bi\'etale category $C$ will be called {\em biample} if compact bislices form a basis of the topology of $C_1$.
\end{definition}

\begin{lemma}\label{lem:coample}
A bi\'etale category is biample  if and only if it is both ample and coample.
\end{lemma}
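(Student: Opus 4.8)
The plan is to prove the statement by double implication, exploiting the self-duality inherent in the definitions of bi\'etale and biample categories. The key observation is that a bislice is by definition both a slice and a coslice, and a compact bislice is a compact set that is simultaneously an open local section and an open local cosection.

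\begin{proof}[Proof sketch]
First I would prove the forward direction, assuming $C$ is biample. By definition, $C$ is bi\'etale (so both \'etale and co\'etale) and compact bislices form a basis of the topology on $C_1$. To conclude that $C$ is ample, I need to show that compact \emph{slices} form a basis. This is immediate: every compact bislice is in particular a compact slice (since a bislice is both a slice and a coslice), so a basis consisting of compact bislices is a fortiori a collection of compact slices forming a basis. The argument that $C$ is coample is entirely dual, using that every compact bislice is a compact coslice.

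For the converse, suppose $C$ is both ample and coample. Being ample, $C$ is \'etale, and being coample, $C$ is co\'etale, so $C$ is bi\'etale. It remains to show that compact bislices form a basis of the topology on $C_1$. Here I would take an open set $V\subseteq C_1$ and a point $x\in V$, and produce a compact bislice $U$ with $x\in U\subseteq V$. Since $C$ is ample, there is a compact slice $A$ with $x\in A\subseteq V$. Since $C$ is coample, there is a compact coslice $B$ with $x\in B\subseteq A$. The intersection $A\cap B$ is then a candidate: it is a slice (being contained in the slice $A$, and recalling from the excerpt that a subset of a local section is a local section), it is a coslice (being contained in the coslice $B$), and it contains $x$ and is contained in $V$. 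The one point requiring care is compactness and the compact-open property: I would instead intersect an open compact coslice $B$ sitting inside the open slice $A$, so that $A\cap B = B$ is a compact coslice contained in the slice $A$, hence a compact bislice containing $x$ and contained in $V$.

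The main obstacle is the bookkeeping of openness versus compactness in the converse direction: one must ensure that the bislice obtained by intersecting a compact slice and a compact coslice is \emph{itself} both open and compact. Choosing the coslice $B$ to lie inside the slice $A$ (which is possible precisely because coample means compact coslices form a basis, so one can find such a $B$ inside the open set $A$) sidesteps the issue, since then $B$ is already a compact coslice whose ambient slice-property is inherited from $A$, making $B$ the desired compact bislice. The remaining verifications are routine consequences of the facts recorded earlier in the excerpt, namely that subsets of local sections are local sections and that bi\'etale is the conjunction of \'etale and co\'etale.
\end{proof}
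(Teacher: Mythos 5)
Your proposal is correct and follows essentially the same route as the paper: the forward direction is immediate because a bislice is both a slice and a coslice, and the converse hinges on the same key observation, namely that a (compact, open) coslice contained in a slice is a bislice, obtained by using the coample basis to place a compact coslice inside a compact slice. The only difference is cosmetic: you argue pointwise inside an arbitrary open set, while the paper phrases it as writing each compact slice as a union of compact coslices contained in it.
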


\begin{proof}
Since a bislice is a slice and a coslice, a biample category is both ample and coample. For the other way around, suppose $C$ is both ample and coample. It suffices to show that every compact slice is a union of compact bislices. But compact coslices form a basis of the topology. So every compact slice is a union of compact coslices contained in it. Since a coslice contained in a slice is a bislice, the statement follows.   
\end{proof}

\begin{proposition}\label{prop:char_etale}\mbox{}
\begin{enumerate}
    \item An \'etale category $C$ is ample if and only if $C_0$ is a locally compact Stone space.
    \item A bi\'etale category is ample if and only if it biample.
\end{enumerate}
\end{proposition}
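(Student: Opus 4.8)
The plan is to prove part (1) directly and then deduce part (2) formally from part (1), its dual, and Lemma \ref{lem:coample}. For part (1) the two structural facts I would exploit are that the domain map $d$ is a local homeomorphism and that, by Proposition \ref{prop:restriction} and Corollary \ref{cor:u_homeom}, the unit set $u(C_0)$ is a slice and $u\colon C_0\to u(C_0)$ is a homeomorphism. Recall also from Lemma \ref{lem:basis}(1) that slices already form a basis of $C_1$, so that ampleness is exactly the statement that every point of $C_1$ admits a neighbourhood basis of \emph{compact} slices. Since $C_0$ is locally compact Hausdorff by hypothesis in either direction, the only content of part (1) is the presence of a basis of compact-open sets, i.e. the Stone condition.

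For the forward direction of (1), I would assume $C$ ample and restrict attention to the open set $u(C_0)$: by ampleness every point of $u(C_0)$ has arbitrarily small compact slices contained in $u(C_0)$, and such a compact slice $A$ is open in $C_1$, hence compact-open in $u(C_0)$. Transporting these along the homeomorphism $u^{-1}$ (equivalently, passing to $d(A)=u^{-1}(A)$) yields a basis of compact-open sets of $C_0$, so $C_0$ is a locally compact Stone space. For the reverse direction, assume $C_0$ is a locally compact Stone space and take $x\in C_1$ together with a slice $A\ni x$. The restriction $d|_A\colon A\to d(A)$ is a homeomorphism onto the open set $d(A)\subseteq C_0$; choosing a compact-open $K$ with $d(x)\in K\subseteq d(A)$, the preimage $(d|_A)^{-1}(K)$ is a slice containing $x$, contained in $A$, and homeomorphic to the compact set $K$, hence a compact slice. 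As slices form a basis, this shows compact slices form a basis and $C$ is ample.

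For part (2) I would argue purely formally. By part (1) applied to the \'etale structure, $C$ is ample if and only if $C_0$ is a locally compact Stone space; by the dual of part (1) applied to the co\'etale structure, $C$ is coample if and only if $C_0$ is a locally compact Stone space. Hence $C$ is ample if and only if it is coample, and by Lemma \ref{lem:coample} a bi\'etale category is biample precisely when it is simultaneously ample and coample. Combining these facts gives that ampleness, coampleness and biampleness all coincide for a bi\'etale category.

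The step requiring the most care is the transport of compactness: checking that $(d|_A)^{-1}(K)$ is genuinely compact, which uses that $d|_A$ is a homeomorphism and not merely continuous, and checking that a compact slice sitting inside $u(C_0)$ is open in $u(C_0)$ rather than merely in $C_1$. These are the only points where the local-homeomorphism hypothesis and the openness of slices are essential; everything else is a routine unwinding of the definitions.
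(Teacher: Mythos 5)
Your proposal is correct and follows essentially the same route as the paper: the forward direction of (1) transports compact slices inside $u(C_0)$ along the homeomorphism $u$ to get a compact-open basis of $C_0$, the reverse direction pulls compact-open sets of $d(A)$ back through the homeomorphism $d|_A$, and (2) is deduced formally from (1), its dual, and Lemma \ref{lem:coample}. The points you flag as delicate (compactness transport along a homeomorphism, openness of $u(C_0)$) are exactly the ingredients the paper relies on.
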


\begin{proof}
(1) If $C$ is ample then  compact slices contained in $u(C_0)$ form a basic of the topology on $u(C_0)$. Note that compact slices contained in $u(C_0)$ are of the form $u(A)$ where $A$ is a compact-open subset of $C_0$. It follows that $C_0$ is a Hausdorff space such that compact-open sets form a basis of its topology. That is, $C_0$ is a locally compact Stone space. Conversely, suppose that $C$ is \'etale and compact-open sets form a basis of the topology on $C_0$. Since $C^{op}$ is the basis of the topology on $C_1$, it suffices to show that every slice $U\in C^{op}$ is a union of compact slices. But $d(U)$ is a union of compact-open sets in $C_0$. Applying the fact that $U$ is homeomorphic to $d(U)$ via $d|_U$, the result follows.

(2) Let $C$ be bi\'etale and ample. By (1) $C_0$ is a locally compact Stone space. By the dual of (1) $C$ is coample and thus biample, by Lemma \ref{lem:coample}. The reverse implication is clear, since a bislice is a slice.
\end{proof}

Bearing in mind Remark \ref{rem:et_groupoid}, we have the following corollary.

\begin{corollary} \label{cor:ample_biample} An \'etale groupoid is ample if and only if it is biample. 
\end{corollary}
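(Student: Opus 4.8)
The plan is to reduce the corollary immediately to the two facts already established: Remark \ref{rem:et_groupoid}, which tells us that for topological groupoids the notions \emph{\'etale}, \emph{co\'etale} and \emph{bi\'etale} all coincide, and Proposition \ref{prop:char_etale}(2), which asserts that a bi\'etale category is ample if and only if it is biample. The key observation is that these two results dovetail perfectly: the first upgrades ``\'etale'' to ``bi\'etale'' in the groupoid setting, and the second then converts ``ample'' to ``biample'' in the bi\'etale setting.

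Concretely, I would argue as follows. Let $C$ be an \'etale groupoid. First I would invoke Remark \ref{rem:et_groupoid} to conclude that $C$ is in fact bi\'etale, so that $C$ qualifies as a bi\'etale topological category and Proposition \ref{prop:char_etale}(2) becomes applicable. Applying that proposition directly yields that $C$ is ample if and only if it is biample, which is exactly the assertion of the corollary.

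There is no genuine obstacle here: the entire content has been absorbed into Remark \ref{rem:et_groupoid} and Proposition \ref{prop:char_etale}(2), and the corollary is a one-line deduction chaining the two equivalences. If anything, the only point to state carefully is that the hypothesis ``\'etale groupoid'' already forces bi\'etaleness, which is precisely what licenses the use of the bi\'etale-specific Proposition \ref{prop:char_etale}(2); once that is noted, the conclusion is immediate.
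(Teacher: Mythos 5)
Your proposal is correct and follows exactly the paper's route: the corollary is stated immediately after Proposition \ref{prop:char_etale} with the preamble ``Bearing in mind Remark \ref{rem:et_groupoid}, we have the following corollary,'' i.e.\ the paper likewise uses the remark to upgrade an \'etale groupoid to a bi\'etale category and then invokes Proposition \ref{prop:char_etale}(2). Nothing further is needed.
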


\subsection{The restriction semigroup of slices}
Adopting the notation of \cite{St10}, we denote the set of compact slices of an ample category $C$ by $C^a$.  The set of all compact bislices of $C$ will be denoted by $\widetilde{C}^a$.

\begin{proposition}
Let $C$ be an ample category. Then $C^a$ is closed with respect to the multiplication and the operation $^*$ and thus forms a $(2,1)$-subalgebra of the restriction monoid $C^{op}$.   
\end{proposition}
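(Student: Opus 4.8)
The statement asserts that $C^a$ (compact slices of an ample category $C$) is closed under the multiplication $AB$ and under $A^* = ud(A)$, hence forms a $(2,1)$-subalgebra of $C^{op}$. Since Proposition~\ref{prop:restriction} already establishes that $C^{op}$ (all slices) is a restriction monoid under these operations, the entire task reduces to verifying that the two operations, when applied to \emph{compact} slices, again produce \emph{compact} slices. All the restriction-monoid identities are inherited for free from $C^{op}$, so no algebraic axioms need rechecking.

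First I would handle closure under $^*$. Given a compact slice $A$, the map $d\colon A \to d(A)$ is a homeomorphism by Lemma~\ref{lem:ud} (or by the openness of $d$ together with $A$ being a slice), so $d(A)$ is compact. By Corollary~\ref{cor:u_homeom} the unit map $u\colon C_0 \to u(C_0)$ is a homeomorphism, hence $A^* = ud(A) = u(d(A))$ is the continuous image of the compact set $d(A)$ under $u$, and is therefore compact. It is already a slice by Proposition~\ref{prop:restriction}, so $A^* \in C^a$.

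Next I would treat closure under multiplication. Let $A, B \in C^a$. By Proposition~\ref{prop:restriction}, $AB$ is a slice, so it only remains to show $AB$ is compact. The natural approach is to realize $AB$ as a continuous image of a compact set. Consider the set of composable pairs $(A \times B) \cap C_2$: since $A \times B$ is compact (a product of two compact sets) and $C_2$ is closed in $C_1 \times C_1$ (being $\{(x,y) : r(y) = d(x)\}$, the equalizer of two continuous maps into the Hausdorff space $C_0$), the intersection $(A \times B) \cap C_2$ is a closed subset of the compact set $A \times B$, hence compact. Then $AB = m\big((A\times B) \cap C_2\big)$ is the image of this compact set under the continuous multiplication map $m$, and so $AB$ is compact. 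Being a slice already, $AB \in C^a$.

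\emph{Main obstacle.} The only genuinely nonroutine point is the compactness of the domain $(A\times B)\cap C_2$, and this rests on $C_2$ being closed in $C_1\times C_1$. This in turn requires that $C_0$ be Hausdorff, so that the diagonal pullback defining the composable pairs is a closed condition; this is guaranteed since in an ample (indeed \'etale) category $C_0$ is a locally compact Stone space and hence Hausdorff. I would state this Hausdorffness explicitly as the hinge of the multiplication argument. Everything else is a direct transfer from Proposition~\ref{prop:restriction} together with the standard fact that continuous images of compact sets are compact and closed subsets of compact sets are compact.
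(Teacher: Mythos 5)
Your proposal is correct, and its overall strategy coincides with the paper's: all algebraic identities are inherited from Proposition~\ref{prop:restriction}, so the only issue is that the two operations preserve compactness. The one place where you genuinely diverge is the compactness of $AB$. The paper disposes of this in one line by asserting that ``$m$ is open and the image of a compact set under an open map is compact'' --- which, taken literally, is not a valid general principle (openness of a map does not by itself preserve compactness of images; continuity does). Your argument supplies the correct mechanism: $(A\times B)\cap C_2$ is compact because $A\times B$ is compact and $C_2$ is closed in $C_1\times C_1$, the latter using the Hausdorffness of $C_0$ (guaranteed since $C_0$ is a locally compact Stone space for an ample category); then $AB=m\bigl((A\times B)\cap C_2\bigr)$ is the continuous image of a compact set. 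The openness of $m$ is only needed to know that $AB$ is \emph{open}, i.e.\ a slice, and that is already contained in Proposition~\ref{prop:restriction}. For closure under~$^*$ your route through Lemma~\ref{lem:ud} and Corollary~\ref{cor:u_homeom} is slightly longer than necessary (one can just note that $A^*=ud(A)$ is the image of the compact set $A$ under the continuous map $u\circ d$), but it is perfectly valid. In short: you prove the same statement by the same decomposition, but your treatment of the multiplication is the rigorous version of what the paper only sketches.
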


\begin{proof} If $A,B$ are compact slices then $AB$ is a slice and it is compact since $m$ is open and the image of a compact set under an open map is compact. Similarly, if $A$ is a compact slice, so is $A^* = ud(A)$.
\end{proof}

Observe that $u(C_0)$ is compact if and only if $C_0$ is compact. Therefore,  $C^a$ is a monoid if and only $C_0$ is a compact space.

\begin{proposition}\label{prop:Boolean}
Let $C$ be an ample category. Then $C^a$ is a Boolean  restriction semigroup with local units. 
\end{proposition}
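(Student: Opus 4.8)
The plan is to verify in turn that $C^a$ is a restriction semigroup with a zero projection, and then to check the axioms (BR1)--(BR3) together with the existence of local units. First I would observe that $C^a$ is already a restriction semigroup: by the preceding proposition it is a $(2,1)$-subalgebra of the restriction monoid $C^{op}$ of Proposition \ref{prop:restriction}, so the restriction identities are inherited. The empty set $\varnothing$ is a compact slice with $\varnothing^* = ud(\varnothing) = \varnothing$ and $\varnothing A = A\varnothing = \varnothing$ for all $A\in C^a$; thus $\varnothing$ is a left zero which is a projection, and Lemma \ref{lem:11} applies. It then remains to establish (BR2), (BR1), (BR3) and the existence of local units.

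For (BR2) and the local units I would first identify the projection semilattice. Since $A^* = 1_{d(A)} = u(d(A))$, the projections of $C^a$ are exactly the compact slices contained in $u(C_0)$, i.e.\ the sets $u(K)$ with $K$ a compact-open subset of $C_0$. By Proposition \ref{prop:char_etale}(1) the space $C_0$ is a locally compact Stone space, and by Corollary \ref{cor:u_homeom} the map $u$ is a homeomorphism onto $u(C_0)$, so $K\mapsto u(K)$ is an order isomorphism between $\widehat{C_0}$ and $(P(C^a),\leq)$. As $\widehat{C_0}$ is a generalized Boolean algebra, so is $(P(C^a),\leq)$, giving (BR2). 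For local units, given a compact slice $A$ the image $r(A)$ is compact; since compact-open sets form a basis of $C_0$, finitely many of them cover $r(A)$, and their union $K$ is a compact-open set containing $r(A)$. Then $u(K)\in P(C^a)$ and, by the unit law (UL), $u(K)A = \{a\in A : r(a)\in K\} = A$, so $u(K)$ is a left local unit for $A$.

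For (BR1) the key step is to translate compatibility into a geometric condition. A direct computation gives $AB^* = \{a\in A : d(a)\in d(A)\cap d(B)\}$ and symmetrically for $BA^*$, so $A\smile B$ means precisely that the unique element of $A$ and the unique element of $B$ over each point of $d(A)\cap d(B)$ coincide; that is, $A$ and $B$ agree where both are defined. I would then show that $A\cup B$ is a compact slice: it is open and compact, and $d$ is injective on it, since if $d(s)=d(t)$ with $s\in A$ and $t\in B$ then $d(s)\in d(A)\cap d(B)$ forces $s=t$ by compatibility. Since for slices the natural order $\leq$ coincides with inclusion (a short computation using that $BA^* = \{b\in B : d(b)\in d(A)\}$), the set $A\cup B$ is the least upper bound of $A$ and $B$, so $A\vee B = A\cup B$ exists in $C^a$, establishing (BR1).

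Finally, (BR3) follows by combining distributivity of multiplication over unions with the stability of compatibility. If $A\vee B$ exists then $A\smile B$, hence $AU\smile BU$ by Lemma \ref{lem:compatibility}; applying the description of joins from the previous paragraph to $AU$ and $BU$ gives $AU\vee BU = AU\cup BU$, while $(A\vee B)U = (A\cup B)U = AU\cup BU$, so $(A\vee B)U = AU\vee BU$. The main obstacle is the content of (BR1): correctly matching the algebraic compatibility relation $A\smile B$ of the restriction monoid with the geometric statement that $A$ and $B$ agree on the overlap of their domains, and using this to verify that the set-theoretic union is again a local section, hence a slice. Once this dictionary is in place, the distributivity law (BR3) and the identification of the projections needed for (BR2) are routine.
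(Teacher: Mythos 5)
Your proposal is correct and follows essentially the same route as the paper: identify the projections with the compact-open subsets of $u(C_0)$ to get (BR2), show that the join of compatible compact slices is their union, verify (BR3) by a set-theoretic computation, and obtain local units by a compactness/covering argument (you cover $r(A)$ directly where the paper covers $A$ by the sets $1_{E_{r(a)}}A$, but the idea is the same). Your explicit dictionary translating $A\smile B$ into agreement of $A$ and $B$ over $d(A)\cap d(B)$, and your explicit treatment of the empty slice as the zero, are correct elaborations of steps the paper leaves terse or implicit.
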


\begin{proof}
It is easy to see that $A\in C^a$ satisfies $A=A^*$ if and only if $A\subseteq u(C_0)$. It follows that projections of $C^a$ are all compact-open subsets of the locally compact Stone space $u(C_0)$ and thus form a generalized Boolean algebra, so that (BR1) holds. Furthermore, suppose $A\smile B$ in $C^a$. We show that $A\cup B \in C^a$. Let $a,b\in A\cup B$ be such that $d(a) = d(b)$. Since $a1_{d(b)} = a1_{d(a)} = a$ and similarly $b1_{d(a)} = b$, we have that $a,b \in AB^* = BA^*$. Since $AB^*$ is a slice, we conclude that $a=b$. It follows that $A\vee B$ exists in $C^a$ and equals $A\cup B$, so that (BR2) holds, too. For (BR3), let $A,B,C \in C^a$ and $A\smile B$. Then $AC, BC \subseteq (A\cup B)C$ so that $AC\cup BC \subseteq (A\cup B)C$. For the reverse inclusion, let $x\in (A\cup B)C$. Then $x=st$, where $s\in A\cup B$, $t\in C$, so that $s\in A$ or $s\in B$ and $x\in AC\cup BC$. 

It remains to prove that $C^a$ has local units. Let $A\in C^a$ and $a\in A$. There is a compact neighborhood $E_{r(a)} \subseteq C_0$ of $r(a)$. Then $1_{E_{r(a)}}A$ is a compact slice which contains $a$ and is contained in $A$. We have that $A=\cup_{a\in A} 1_{E_{r(a)}}A$ is an open cover of a compact set $A$, so there is a finite subcover $A=\cup_{a\in I} 1_{E_{r(a)}}A$. Since  $E=\cup_{a\in I} E_{r(a)}$ is a compact-open set, $1_E$ is a compact slice, so that $A=1_{E}A\in C^a$, and  $C^a$ has local units. 
\end{proof}

\subsection{The case where $r$ is open} In the sequel ample categories such that the range map $r$ is open will play an important role, so we give them a name.

\begin{definition} (Strongly \'etale and strongly ample topological categories) We call an \'etale (resp. ample) topological category {\em strongly \'etale}  (resp. {\em strongly ample}) if the map $r$ is open. Strongly co\'etale and strongly coample topological categories are defined dually.
\end{definition}

The definition implies that the class of strongly ample topological categories is an intermediate class between those of ample and biample topological categories, and similarly for the class of strongly \'etale topological categories. Corollary~\ref{cor:ample_biample} implies that for a topological groupoid the notions of being ample, coample, strongly ample, strongly coample or biample are equivalent. Likewise, Remark~\ref{rem:et_groupoid} yields that for a topological groupoid the notions of being \'etale, co\'etale, strongly \'etale, strongly co\'etale or bi\'etale are equivalent. 

Let $C$ be a strongly \'etale topological category. For each $A\in C^{op}$ there is a well defined element $A^+ = ur(A) = 1_{r(A)}\in C^{op}$. This gives rise to a unary operation $A\mapsto A^+$ on $C^{op}$.

\begin{definition} (Boolean range semigroups)
A {\em Boolean range semigroup} is a range semigroup $(S, \cdot, ^*, ^+)$ such that $(S,\cdot, ^*)$ is a Boolean restriction semigroup. 
\end{definition}

\begin{proposition}\mbox{} \label{prop:range}
\begin{enumerate}
\item Let $C$ be a strongly \'etale category. Then $(C^{op}, \cdot \, , ^*, ^+)$ is a range monoid. 
\item Let $C$ be a strongly ample category. Then $(C^a, \cdot \,,  ^*, ^+)$ is a Boolean range semigroup.
\end{enumerate}
\end{proposition}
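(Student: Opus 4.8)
The plan is to build directly on what has already been established. By Proposition~\ref{prop:restriction}, $(C^{op}, \cdot\,, ^*)$ is a restriction monoid with unit $u(C_0)$, and by Proposition~\ref{prop:Boolean}, $(C^a, \cdot\,, ^*)$ is a Boolean restriction semigroup with local units. So for part (1) the only new content is to equip $C^{op}$ with the cosupport operation $A \mapsto A^+ = ur(A) = 1_{r(A)}$ and check that $(C^{op}, \cdot\,, ^+)$ is a coEhresmann monoid satisfying the compatibility identities \eqref{eq:axioms_common}; this makes $(C^{op}, \cdot\,, ^*, ^+)$ biEhresmann, and since $(C^{op}, \cdot\,, ^*)$ is restriction, it is a range monoid by definition. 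Part (2) will then reduce to showing that $C^a$ is closed under $^+$.

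The first and most important point is \emph{well-definedness}: I must verify that $A^+ = ur(A)$ really lies in $C^{op}$, and this is precisely where the hypothesis that $C$ is strongly \'etale (i.e. $r$ open) is used. Since $A$ is open and $r$ is open, $r(A)$ is open in $C_0$; since $u$ is an open map (Corollary~\ref{cor:u_homeom}), $u(r(A))$ is open in $C_1$; and being a subset of the slice $u(C_0)$, it is itself a slice. Hence $A^+ \in C^{op}$. Dually, the fact that $u(C_0)$ is its own $^+$ is consistent with the monoid unit.

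The remaining identities are then routine computations using (DRU), (RP), (UL) and $d(1_x) = r(1_x) = x$. Explicitly: $A^+ A = 1_{r(A)} A = A$ by the unit law, which is $x^+x = x$; $A^+ B^+ = 1_{r(A)} 1_{r(B)} = 1_{r(A) \cap r(B)} = B^+ A^+$, an element equal to its own $^+$, which gives the middle axiom of \eqref{eq:axioms_plus}; and the computation $r(AB) = \{r(a) : a \in A,\ d(a) \in r(B)\} = r(AB^+)$ yields $(AB)^+ = (AB^+)^+$, the last axiom of \eqref{eq:axioms_plus}. For \eqref{eq:axioms_common} one uses $d(1_x)=x$ to get $(A^+)^* = u\,d\,u\,r(A) = ur(A) = A^+$, and dually $(A^*)^+ = A^*$. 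This completes part (1).

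For part (2) I would first show $C^a$ is closed under $^+$: if $A \in C^a$ is a compact slice, then $r(A)$ is compact (continuous image of a compact set) and open (as $r$ is open), hence compact-open, so $A^+ = u(r(A))$ is a compact open slice, i.e. $A^+ \in C^a$. Thus $C^a$ is a $(2,1,1)$-subalgebra of the range monoid $C^{op}$ of part (1), and since the range-semigroup axioms are identities they are inherited by $C^a$; combined with Proposition~\ref{prop:Boolean} this shows $(C^a, \cdot\,, ^*, ^+)$ is a Boolean range semigroup. All the algebraic identities are elementary; the only genuine content, and the sole place the ``strongly'' hypothesis enters, is the well-definedness of $A^+$ as a slice in part (1) and as a \emph{compact} slice in part (2), both resting squarely on the openness of $r$. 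I do not expect any serious obstacle beyond this bookkeeping.
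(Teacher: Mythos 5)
Your proposal is correct and follows essentially the same route as the paper: well-definedness of $A^+ = ur(A)$ via openness of $r$ (and of $u$), routine verification of the coEhresmann and compatibility identities on top of Proposition~\ref{prop:restriction}, and for part (2) the observation that $r(A)$ is compact-open so that $C^a$ is closed under $^+$, combined with Proposition~\ref{prop:Boolean}. The only cosmetic difference is that you obtain compactness of $r(A)$ directly as the continuous image of the compact set $A$, whereas the paper factors through the homeomorphism $\alpha_A\colon d(A)\to A$; both arguments are equally valid.
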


\begin{proof}
(1)  The identities $A^+A = A$ and $(AB)^+ = (AB^+)^+$ are verified similarly as the identities $AA^* = A$ and $(AB)^* = (A^*B)^*$ in the proof of Proposition \ref{prop:restriction}. The identities $A^+B^+ = B^+A^+$, $(A^+)^* = A^+$ and $(A^*)^+ = A^*$ are easy to verify. This, together with Proposition \ref{prop:restriction}, yields the claim.

(2) Let $A\in C^a$ and let $\alpha_A\colon d(A)\to A$ be the inverse homeomorphism to $d|_A\colon A\to d(A)$. Then the map $r\alpha_A\colon d(A) \to r(A)$ is continuous, so it takes the compact set $d(A)$ to the compact set $r(A)$. It follows that $A^+ = 1_{r(A)}$ is compact-open, so it is in $C^a$. The statement now follows from part (1) and Proposition \ref{prop:Boolean}.
\end{proof}

\begin{proposition}\mbox{} \label{prop:det}
\begin{enumerate}
\item
Suppose $C$ is a strongly \'etale category. Then the set of bideterministic elements of the range monoid $C^{op}$ coincides with the set $\widetilde{C}^{op}$ of bislices. 
\item Suppose $C$ is a strongly ample category. Then the set of bideterministic elements of the Boolean range semigroup $C^{a}$ coincides with the set $\widetilde{C}^a$ of compact bislices. Moreover, $\widetilde{C}^a$ is a Boolean birestriction semigroup.
\end{enumerate}
\end{proposition}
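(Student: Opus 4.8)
The plan is to characterize the bideterministic elements of $C^{op}$ (resp.\ $C^a$) purely through the slice/coslice dichotomy, and then obtain the ``moreover'' part from the general machinery of Section~\ref{s:determ}. Since $C^{op}$ is a range monoid by Proposition~\ref{prop:range}(1), Corollary~\ref{cor:det} shows every element of $C^{op}$ is deterministic, so in both parts it suffices to identify the \emph{codeterministic} elements. First I would compute the two natural orders on slices concretely: for $A,B\in C^{op}$ one has $A\leq B$ iff $A=B\cdot 1_{d(A)}=\{b\in B\colon d(b)\in d(A)\}$, which by injectivity of $d|_B$ is equivalent to $A\subseteq B$; dually, $A\leq' B$ iff $A=1_{r(A)}B=\{b\in B\colon r(b)\in r(A)\}$. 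By Lemma~\ref{lem:det}, $B$ is then codeterministic exactly when $A\subseteq B$ forces $A=\{b\in B\colon r(b)\in r(A)\}$ for every slice $A$.

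Next I would prove the two directions. If $B$ is a bislice, then $r|_B$ is injective, so for a slice $A\subseteq B$ and any $b\in B$ with $r(b)\in r(A)$ there is $a\in A$ with $r(a)=r(b)$, forcing $a=b$ and hence $b\in A$; thus $B$ is codeterministic, so bideterministic. Conversely, if $B$ is a slice that is \emph{not} a coslice, pick $b_1\neq b_2$ in $B$ with $r(b_1)=r(b_2)$. The key point is that $d|_B$ is a homeomorphism of $B$ onto the open set $d(B)\subseteq C_0$ (Lemma~\ref{lem:ud}) and $C_0$ is Hausdorff, so $d(b_1)\neq d(b_2)$ can be separated by an open $U$; its preimage $A=(d|_B)^{-1}(U)$ is a slice with $b_1\in A$, $b_2\notin A$, and since $r(b_2)=r(b_1)\in r(A)$ codeterminism fails. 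This proves part (1). For part (2) the same argument applies verbatim to compact slices, the only addition being that $C_0$ is now a locally compact Stone space (Proposition~\ref{prop:char_etale}(1)), so $U$ may be taken compact-open (and intersected with the compact-open set $d(B)$); pulling it back through $d|_B$ yields a \emph{compact} slice $A$, keeping everything in $C^a$. Hence the bideterministic elements of $C^a$ are exactly the compact bislices $\widetilde{C}^a$.

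For the final claim I would invoke Proposition~\ref{prop:restr1}(1): $\widetilde{C}^a=\mathscr{BD}(C^a)$ is a birestriction $(2,1,1)$-subalgebra of $C^a$ with $P(\widetilde{C}^a)=P(C^a)$. Then (BBR2) is immediate, since $P(C^a)$ is a generalized Boolean algebra by Proposition~\ref{prop:Boolean}. For (BBR1), take $A,B\in\widetilde{C}^a$ with $A\asymp B$; in particular $A\smile B$, so by the proof of Proposition~\ref{prop:Boolean} the join $A\vee B$ exists in $C^a$ and equals $A\cup B$. It remains to check that $A\cup B$ is a coslice: if $r(a)=r(b)$ with $a\in A$ and $b\in B$, then $r(a)\in r(B)$, so the cocompatibility identity $1_{r(B)}A=1_{r(A)}B$ gives $a\in B$, whence $a=b$ by injectivity of $r|_B$. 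Thus $A\cup B\in\widetilde{C}^a$ is also the join in the subalgebra $\widetilde{C}^a$, establishing (BBR1) and making $\widetilde{C}^a$ a Boolean birestriction semigroup.

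The hard part will be the converse direction of the characterization (bideterministic $\Rightarrow$ bislice): because $C_1$ need not be Hausdorff, the two points $b_1,b_2$ cannot be separated directly in $C_1$, and the separation must instead be performed downstairs in $C_0$ through the homeomorphism $d|_B$; for part (2) this separation must be further refined to a compact-open one so that the resulting witness $A$ stays inside $C^a$.
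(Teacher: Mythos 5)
Your proposal is correct and follows essentially the same route as the paper: reduce to codeterminism via Corollary~\ref{cor:det}, show a bislice is codeterministic using injectivity of $r$ on it, refute codeterminism of a non-coslice by separating the two domain points in the Hausdorff space $C_0$ and pulling back through $d$, and obtain the ``moreover'' part from Proposition~\ref{prop:restr1} together with the join $A\vee B=A\cup B$ inherited from $C^a$. The only differences are presentational: you phrase the characterization through the order criterion of Lemma~\ref{lem:det} rather than the identity $A1_E=1_{r(A1_E)}A$, and you spell out two details the paper passes over quickly (the compact-open refinement needed in part~(2) and the verification that $A\cup B$ is again a coslice when $A\asymp B$).
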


\begin{proof}
(1) From Proposition \ref{prop:range} we know that $(C^{op}, \cdot \, , ^*, ^+)$ is a range monoid. By Corollary \ref{cor:det} its bideterministic elements are precisely its codeterministic elements. Let $A\in C^{op}$ be a bislice and $E$ be an open set in $C_0$. We show that $A1_E = 1_{r(A1_E)}A$. Let $x\in A1_E$. Then $x\in A$ and $1_{r(x)}\in 1_{r(A1_E)}$ which implies the inclusion $A1_E \subseteq 1_{r(A1_E)}A$. For the reverse inclusion, suppose $x\in 1_{r(A1_E)}A$. This means that $x\in A$ and $r(x) \in  r(A1_E)$. Since the restriction of $r$ to $A$ is injective, it follows that $x \in A1_E$, so that $A$ is  codeterministic.
Suppose now that $A\in C^{op}$ is codeterministic and show that $r|_A$ is injective. From the converse, suppose that $a,b\in A$ are such that $r(a)=r(b)$ and $a\neq b$. Let $O_a$ and $O_b$ be disjoint neighborhoods of $a$ and $b$, respectively. Then $O_aA$ contains $a$ and avoids $b$ and $O_bA$ contains $b$ and avoids $a$. On the other hand, for every open set $E\subseteq C_0$ we have that $A1_{E}$ either contains both $a$ and $b$ (which is the case when $r(a)=r(b)\in E$) or contains none of them. Hence $A$ is not codeterministic, which is a contradiction. This proves that $A$ is a coslice and thus a bislice.

(2) follows from (1) as bideterministic elements of $C^{a}$ are precisely bideterministic elements of $C^{op}$ which are compact. By Proposition \ref{prop:restr1}, $\widetilde{C}^a$ is a birestriction semigroup. Its projections $P(\widetilde{C}^a) = P(C^a)$ form a generalized Boolean algebra. If $A,B\in \widetilde{C}^a$ and $A\asymp B$ then $A\smile B$, so $A\vee B$ exists in $C^a$ by Proposition \ref{prop:range}(2). Since $A\vee B$ is just the union of $A$ and $B$, and the union of two compact sets is compact, $A\vee B\in \widetilde{C}^a$. Then we get that $A\vee B$ exists in $\widetilde{C}^a$ and equals $A\cup B$.
\end{proof}

The following is similar to Remark \ref{rem:sp}.

\begin{remark}\label{rem:sp1}
Since range semigroups are biEhresmann, Proposition \ref{prop:restr1} implies that $\widetilde{C}^{op}$ is a birestriction monoid. Since $P(\widetilde{C}^{op}) = P(C^{op})$,  projections of $\widetilde{C}^{op}$ form a spatial frame. Moreover, any compatible family of elements of $\widetilde{C}^{op}$ has a join in $C^{op}$ and belongs to $\widetilde{C}^{op}$, so it has a join in $\widetilde{C}^{op}$. It follows that $\widetilde{C}^{op}$ is a {\em spatial complete birestriction monoid.} These monoids were studied in \cite{KL17}\footnote{They were called complete restriction monoids in \cite{KL17}.}.
\end{remark}

\subsection{The case where $r$ is a local homeomorphism}
Let $S$ be a Boolean range semigroup and suppose that $s,t\in {\mathscr{BD}}(S)$ satisfy $s\smile t$. Example \ref{ex:23a} shows that $s\vee t$ may exist in $S$, but not in ${\mathscr{BD}}(S)$. This motivates to single out the class of Boolean range semigroups consisting of elements which are join-generated by bideterministic elements.

\begin{definition}\label{def:etale} (\'Etale Boolean range semigroups\footnote{This terminology is consistent with the term \'etale Ehresmann quantal frame of \cite[p. 407]{KL17}.}) We say that a Boolean range semigroup is {\em \'etale} if everyone of its elements $s$ can be written as a finite join $s=s_1\vee \cdots \vee s_n$ of (compatible but not necessarily bicompatible) bideterministic elements $s_i$.
\end{definition}

\begin{proposition} \label{prop:etale} Let $C$ be a biample category. Then the Boolean range semigroup $C^a$ is \'etale.
\end{proposition}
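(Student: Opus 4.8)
The plan is to reduce the statement to a routine covering argument combined with the earlier identification of bideterministic elements. First I would observe that since $C$ is biample its range map $r$ is a local homeomorphism, hence open, so $C$ is in particular strongly ample. By Proposition \ref{prop:range}(2) this makes $C^a$ a Boolean range semigroup, and by Proposition \ref{prop:det}(2) its bideterministic elements are precisely the compact bislices forming $\widetilde{C}^a$. Thus the goal reduces to showing that every compact slice $A\in C^a$ can be written as a finite join of compact bislices.

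Next I would exploit the defining property of a biample category, namely that compact bislices form a basis of the topology on $C_1$. Given a compact slice $A$ (which is in particular open), for each $x\in A$ I choose a compact bislice $B_x$ with $x\in B_x\subseteq A$. The family $\{B_x\}_{x\in A}$ is then an open cover of the compact set $A$, so it admits a finite subcover $B_1,\dots,B_n$, yielding $A=B_1\cup\cdots\cup B_n$ with each $B_i$ a compact bislice contained in $A$.

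It then remains to recognize this union as a join in $C^a$. Here I would use that for slices, containment coincides with the natural partial order: if $B\subseteq A$ are slices then $B=A\,1_{d(B)}=AB^*$, so $B\le A$. Consequently each $B_i\le A$, so the $B_i$ share the common upper bound $A$ and are therefore pairwise compatible by Lemma \ref{lem:3}. Since $C^a$ is Boolean, the join $B_1\vee\cdots\vee B_n$ exists, and (as in the proof of Proposition \ref{prop:Boolean}) the join of compatible compact slices is simply their union; carrying this out inductively gives $B_1\vee\cdots\vee B_n=B_1\cup\cdots\cup B_n=A$. This exhibits $A$ as a finite join of bideterministic elements, so $C^a$ is \'etale.

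I do not anticipate a serious obstacle: the structure is the familiar pattern of covering a compact-open set by finitely many basic bislices and then translating unions into joins. The only point deserving a little care is this last translation — verifying that a finite family of pairwise compatible compact slices contained in a common slice really has its union as least upper bound in $C^a$ — but this is exactly the binary computation already performed in Proposition \ref{prop:Boolean}, and it extends verbatim to finite families by a straightforward induction.
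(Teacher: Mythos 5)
Your proof is correct and follows essentially the same route as the paper: the paper's own argument is the one-line observation that, compact bislices being a basis, every compact slice is a finite union of compact bislices, which by Proposition \ref{prop:det} are exactly the bideterministic elements of $C^a$. Your version merely spells out the compactness subcover step and the (already established, via Proposition \ref{prop:Boolean}) identification of unions of compatible compact slices with joins.
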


\begin{proof}
Since compact bislices form a basis of the topology on $C_1$, every compact slice is a finite union of compact bislices, which, by Proposition \ref{prop:det}, are precisely the bideterministic elements of $C^a$.
\end{proof}

\subsection{Morphisms: cofunctors} The following definition extends that of the action of a topological groupoid on a topological space \cite{BEM12}.

\begin{definition} (Action of a topological category on a topological space)
Let $C$ be a topological category and $X$ a topological space. 
An {\em action} of $C$ on $X$ is a pair $(\mu, f)$ where $f\colon X\to C_0$ is a continuous map, which, following \cite{BEM12}, we call the {\em anchor map} and $\mu\colon C_1\times_{d,f} X = \{(s,x)\in C_1\times X\colon d(s)= f(x)\} \to X$, $(s,x)\mapsto s\cdot x$, is a continuous map such that:
\begin{enumerate}
\item[(A1)] $f(s\cdot x) = r(s)$ for all $(s,x)\in C_1\times_{d,f} X$;
\item[(A2)] $s\cdot (t\cdot x) = st\cdot x$ for all $s,t\in C_1$ and $x\in X$ for which both sides of the equality are defined;
\item[(A3)] $1_{f(x)}\cdot x = x$ for all $x\in X$.
\end{enumerate}
We say that the action $(\mu,f)$ is {\em injective} if for all distinct $x,y\in X$ and $s\in C_1$ with $d(s)=f(x)=f(y)$ we have that $s\cdot x\neq s\cdot y$ or, equivalently, for all $x,y\in X$ and $s\in C_1$ satisfying $d(s)=f(x)=f(y)$ and $s\cdot x = s\cdot y$, we have that $x=y$.
\end{definition}
If $C$ is a groupoid then $(\mu,f)$ is automatically injective: if  $x,y\in X$ and $s\in C_1$ are such that $d(s)=f(x)=f(y)$ and $s\cdot x = s\cdot y$, we have 
$$x=1_{f(x)}\cdot x = s^{-1}s\cdot x = s^{-1}\cdot (s\cdot x)=s^{-1}\cdot (s\cdot y) = s^{-1}s\cdot y = 1_{f(y)}\cdot y=y.$$   

If an action $(\mu, f)$ of $C$ on $X$ is given, we define the {\em transformation category} $C\rtimes X$, as follows. Its space of objects is $X$ and its space of arrows is $C_1\times_{d,f} X$. The structure maps are given by $u(x) = (1_{f(x)},x)$, $d(s,x) = x$, $r(s,x) = s\cdot x$ and $(s,t\cdot x) (t,x) = (st,x)$. 

The following definition is inspired by \cite{BEM12}, see also \cite{CG21, K12}.

\begin{definition} (Cofunctors)  A {\em cofunctor} $F=(\mu, f, \rho)\colon C\rightsquigarrow D$ between ample topological categories $C$ and $D$ is given by an action $(\mu, f)$ of $C$ on $D_0$ with $f$ being proper and a functor $\rho\colon C\rtimes D_0 \to D$ between topological categories which acts identically on objects such that for every compact slice $A\subseteq C_1$ we have that $F_*(A) = \{\rho_1(s,x)\colon s\in A, (s,x)\in C\rtimes D_0\}$ is compact-open.
\end{definition}

It is easy to see that $F_*(A)$ is a compact slice for every compact slice $A$. 

Cofunctors between ample topological categories compose as follows. Suppose that $C$, $D$ and $E$ are ample topological categories, and let $F = (\bar{\mu}, \bar{f}, \bar{\rho}) \colon C\rightsquigarrow D$ and $G = (\tilde{\mu}, \tilde{f}, \tilde{\rho})\colon D \rightsquigarrow E$ be cofunctors. The composition $G\circ F \colon C \rightsquigarrow E$ is defined as $G\circ F = (\mu, f, \rho)$ where $f = \bar{f}\tilde{f}\colon E_0\to C_0$, the map $\mu \colon C_1\times_{d, f} E_0 \to E_0$, $(s,x)\mapsto s\cdot x$ is given by $s\cdot x = \tilde{\mu}(\bar{\rho}_1(s, \tilde{f}(x)),x)$, and the functor $\rho\colon C\rtimes E_0 \to E$ is given by $\rho_0(x)=x$ and $\rho_1(s,x) = \tilde{\rho}_1(\bar{\rho}_1(s,\tilde{f}(x)),x)$. It is routine to veriry that $G\circ F$ is well defined and is a cofunctor.

We now define special kinds of cofunctors. 

\begin{definition}
Let $F=(\mu,f,\rho)\colon C\rightsquigarrow D$ be a cofunctor. We say that $F$ is:
\begin{itemize}
\item {\em injective on arrows} if for all $x\in X$ we have that $\rho_1(s,x) = \rho_1(t,x)$ where $(s,x),(t,x)\in C_1\times_{d,f} D_0$ implies that $s=t$;
\item {\em surjective on arrows}
if for all $x\in X$ and $t\in D_1$ with $d(t) = x$ there is $(s,x) \in C_1\times_{d,f} D_0$ with $\rho_1(s,x) = t$;
\item {\em bijective on arrows} if it is both injective and surjecrive on arrows.
\end{itemize}
\end{definition}

Let $C,D$ be ample categories and $F = (\mu, f, \rho)\colon C\rightsquigarrow D$ a cofunctor. It is easy to see that $F_* \colon C^a \to D^a$ is a $(2,1)$- morphism. Moreover, since $f$ is proper, the restriction of $F_*$ to projections is a proper morphism of generalized Boolean algebras, by Theorem \ref{th:Stone_classical}.

\begin{proposition}\label{prop:morphisms1} Let $F=(\mu,f,\rho)\colon C\rightsquigarrow D$ be a cofunctor between ample categories.
\begin{enumerate}
\item If $F$ is injective on arrows then $F_*\colon C^a\to D^a$ is a weakly meet-preserving morphism.
\item If $F$ is surjective on arrows then $F_*\colon C^a\to D^a$ is a proper morphism.
\item If the action $(\mu,f)$ is injective then $F_*$ preserves bideterministic elements.
\end{enumerate}   
\end{proposition}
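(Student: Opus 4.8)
The plan is to read each algebraic condition off the topology, using three standing facts. First, for compact slices $A\leq B$ is the same as the inclusion $A\subseteq B$, and a compatible join is just a union (both are established in the proof of Proposition~\ref{prop:Boolean}). Second, for every $(s,x)\in C\rtimes D_0$ one has $d(\rho_1(s,x))=x$ and $r(\rho_1(s,x))=s\cdot x$, since $\rho$ is a functor acting identically on objects. Third, because $C_0$ is a locally compact Stone space (Proposition~\ref{prop:char_etale}(1)), every compact $K$ inside an open $O\subseteq C_0$ lies in some compact-open $W$ with $K\subseteq W\subseteq O$ (cover $K$ by basic compact-open sets inside $O$ and take a finite subcover). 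I would also record that for any compact slice $A$ the map $\rho_1$ restricts to a homeomorphism of $\{(s,x):s\in A,\ (s,x)\in C\rtimes D_0\}$ onto $F_*(A)$, so each $t\in F_*(A)$ has a \emph{unique} lift $s\in A$ with $d(s)=f(d(t))$ and $\rho_1(s,d(t))=t$. Throughout, $F_*$ is already known to be a morphism, so only the extra property is at stake in each part.

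For part (1), take $A,B\in C^a$ and a compact slice $U\in D^a$ with $U\subseteq F_*(A)$ and $U\subseteq F_*(B)$. Each $t\in U$ has a unique lift $s_A\in A$ and a unique lift $s_B\in B$, both over $x=d(t)$, and injectivity on arrows forces $s_A=s_B=:s(t)\in A\cap B$. To produce a genuine lower bound I fatten: the set $d(\{s(t):t\in U\})=f(d(U))$ is compact and sits inside the open set $d(A\cap B)$, so I choose a compact-open $W$ with $f(d(U))\subseteq W\subseteq d(A\cap B)$ and set $V=\{a\in A:d(a)\in W\}$. Then $V$ is a compact slice, and $V\subseteq A\cap B$ (if $a\in V$ then $d(a)\in W\subseteq d(A\cap B)$, and since $A$ is a slice $a$ equals the point of $A\cap B$ over $d(a)$), so $V\leq A$ and $V\leq B$; finally each $t=\rho_1(s(t),d(t))$ with $s(t)\in V$ gives $U\subseteq F_*(V)$. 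This is precisely weak meet-preservation.

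For part (2), I cover the target. Given $U\in D^a$, surjectivity on arrows yields, for each $t\in U$, an element $s_t\in C_1$ with $d(s_t)=f(d(t))$ and $\rho_1(s_t,d(t))=t$; choosing a compact slice $A_t\ni s_t$ (ampleness of $C$) gives $t\in F_*(A_t)$. Ampleness of $D$ then lets me pick a compact slice $U_t\in D^a$ with $t\in U_t\subseteq U\cap F_*(A_t)$. The $U_t$ form an open cover of the compact set $U$; a finite subcover $U_{t_1},\dots,U_{t_n}$ consists of slices all contained in $U$, hence pairwise compatible by Lemma~\ref{lem:3}, so their finite join exists and equals their union, namely $U$. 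Thus $U=U_{t_1}\vee\cdots\vee U_{t_n}$ with each $U_{t_i}\leq F_*(A_{t_i})$, which is exactly the properness condition.

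For part (3), I show $F_*$ sends compact bislices to compact bislices, i.e. preserves the bideterministic elements of Proposition~\ref{prop:det}. As $F_*(A)$ is always a compact slice, it suffices to see that $r$ is injective on it when $A$ is a bislice. Suppose $\rho_1(s_1,x_1),\rho_1(s_2,x_2)\in F_*(A)$ have equal range, so $s_1\cdot x_1=s_2\cdot x_2$; applying (A1) gives $r(s_1)=f(s_1\cdot x_1)=f(s_2\cdot x_2)=r(s_2)$, whence $s_1=s_2$ because $A$ is a coslice. Then $s_1\cdot x_1=s_1\cdot x_2$ with $d(s_1)=f(x_1)=f(x_2)$, and injectivity of the action forces $x_1=x_2$, so the two arrows coincide; thus $F_*(A)\in\widetilde{D}^a$. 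The main obstacle is the one already visible in part (1): the naive candidate $\{s(t):t\in U\}$ (and, in part (2), the sets $U\cap F_*(A_{t_i})$) is compact but generally not open, hence not a slice. The remedy in both cases is to squeeze a compact-\emph{open} slice between this compact set and an ambient open set, which is exactly where the Stone-space structure of $C_0$ and $D_0$ is needed; the secondary subtlety in part (2) is that the covering slices must be taken \emph{inside} $U$ so that their finite join is guaranteed and equals $U$.
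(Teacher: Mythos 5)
Your proof is correct and follows essentially the same strategy as the paper's: unique lifts plus compactness for (1), a finite cover of $U$ by compact slices contained in $U\cap F_*(A_t)$ for (2), and the coslice property of $A$ combined with injectivity of the action for (3). The only real difference is in (1), where you squeeze a compact-open $W$ between the compact set $f(d(U))$ and the open set $d(A\cap B)$ in $C_0$ and pull back along $d|_A$, whereas the paper covers the set of lifts by compact slices inside $A\cap B$ and extracts a finite subcover using compactness of $U$; both are valid, and your version of (3), which derives $r(s_1)=r(s_2)$ from (A1) and then invokes that $A$ is a coslice, is if anything stated more carefully than the paper's.
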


\begin{proof}
(1) Suppose that $F$ is injective on arrows. Let $T\in D^a$ be such that $T\subseteq F_*(A)\cap F_*(B)$ for $A,B\in C^a$. Then every $t\in T$ is of the form $t=\rho_1(s,d(t)) = \rho_1(s',d(t))$ where $s\in A$, $s'\in B$ and $d(s) = d(s') = f(d(t))$. Since $F$ is injective on arrows, we have $s = s' \in A\cap B$ and it is uniquely determined by $t$. Then there is a neighborhood $S_t\in C^a$ of $s$, contained in $A\cap B$. Then $t\in F_*(S_t)$ and we have that $T = \cup_{t\in T} (F_*(S_t)\cap T)$. Since $T$ is compact, there are $t_1,\dots, t_n\in T$ such that $T=\cup_{i=1}^n (F_*(S_{t_i})\cap T)$ so that $T\subseteq F_*(\cup_{i=1}^n S_{t_i})$. Since $\cup_{i=1}^n S_{t_i}$ is a finite compatible join of elements of $C^a$, it itself belongs to $C^a$, so we have shown that $F_*$ is weakly meet-preserving.

(2) Suppose that $F$ is surjective on arrows and 
let $T\in D^a$. For each $t\in T$ there is some $s\in S$ such that $t=\rho_1(s,d(t))$. Let $S_t\in C^a$ be a neighborhood of $s$. Then $T \subseteq \cup_{t\in T}F_*(S_t)$. Compactness of $T$ ensures that there are $t_1,\dots, t_n\in T$ such that $T \subseteq \cup_{i=1}^nF_*(S_{t_i})$. Denote $T_i = F_*(S_{t_i})\cap T$. Then $T= \cup_{i=1}^n T_i$. Since every $T_i$ is open, it is a union of compact slices $T_i = \cup_{j\in J_i} T_{ij}$ We have an open cover  $T = \cup_{i=1}^n \cup_{j\in J_i} T_{ij}$ of the compact set $T$. Then there is a finite subcover $T = T_1' \cup \dots \cup T_m'$. Note that the compact slices $T_1', \dots, T_m'$ are bounded from the above by $T$, so they are compatible. Since $F_*(S_i) \geq T_{ij}$ for all $j\in J_i$, every $T_k'$ is contained in some $F_*(S_i)$, which shows that $F_*$ is proper, as needed.

(3) Suppose now that $\mu$ is injective and that $T\in \widetilde{C}^a$. As $F_*(T)\in D^a$, in view of Proposition \ref{prop:det} it suffices to show that $F_*(T)$ is a coslice. Suppose that $s,s'\in F_*(T)$ and $r(s) = r(s')$. Then $s=\rho_1(t,x)$ and $s' = \rho_1(t',x')$ for some $t,t'\in T$ where $d(t)=f(x)=d(t') = f(x')$ and $r(s) = t\cdot x = t'\cdot x'$. Since $T$ is a slice and $d(t)=d(t')$, we have $t=t'$. Since $\mu$ is injective, $x=x'$, so that $s=s'$, as needed.
\end{proof}

\subsection{Proper and continuous covering functors} \label{subs:functors}
We now show that cofunctors $F=(\mu, f, \rho)\colon C\rightsquigarrow D$ which are bijective on arrows can be equivalently described as certain functors $D\to C$ called {\em proper and continuous covering functors} (inspired by similar notions in \cite{LL13,KL17}). We first define these functors.

\begin{definition} (Covering functors)
Let $C$ and $D$ be (discrete) categories and $f=(f_0, f_1)$ a functor from $C$ to $D$. We say that $f$ is {\em star injective} if $f_1(x) = f_1(y)$ and $d(x) = d(y)$ imply that $x=y$. It is {\em star surjective} if for any $y\in D_1$ where $d(y)$ is in the image of  $f_0$ there is $x\in C_1$ with $f_1(x)=y$. We say that $f$ is {\em star bijective} or a {\em covering functor} if it is both star injective and star surjective.  
\end{definition}

\begin{definition} (Continuous and proper functors)
If $C$ and $D$ are topological categories, then a functor $f=(f_0, f_1)$ from $C$ to $D$ is  
said to be {\em continuous} if it is a functor between topological categories, that is, if $f_0$ and $f_1$ are continuous maps. We say that $f$ is {\em proper} if $f_1$ is proper.
\end{definition}

Observe that if $f_1$ is proper then $f_0$ is also proper. Indeed, let $A\subseteq D_0$ be a compact set. Then $f_0^{-1} (A) = d(f_1^{-1}(1_A))$ which is compact.

Let $C$ and $D$ be ample categories and $F=(\mu, f_0, \rho)\colon C\rightsquigarrow D$
 a cofunctor which is bijective on arrows. For every $t\in D_1$ then there is a unique $(s,x)\in C_1\times_{d,f} D_0$ with $\rho_1(s,x) = t$. We put $f_1(t)=s$. It is routine to verify that $f=(f_1,f_0)\colon D\to C$ is a continuous covering functor. Moreover, since $F_*$ takes compact slices to compact  slices, so does $f_1^{-1}$, so $f_1$ is a proper map. 

For the other way around, let $f=(f_1,f_0)\colon D\to C$ be a proper and continuous covering functor. We define the action $\mu$ of $C$ on $D_0$ as follows. Let $s\in C_1$ and $x\in D_0$ be such that $d(s) = f(x)$. By star-bijectivity, there is a unique $t\in D_1$ with $d(t)=x$ and $f_1(t)=s$. We put $s\cdot x = r(t)$. It is routine to verify that this defines a continuous action. Furthermore, we define $\rho = (\rho_1,\rho_0)\colon C\rtimes D_0 \to D$ such that  $\rho_0$ is the identity map on $D_0$ and $\rho_1(s,x) = t$. It is routine to check that $\rho$ is a functor. We set $F=(\mu, f_0, \rho)$. If $A\in C^a$ then $F_*(A) = f_1^{-1}(A)\in D^a$, so that $F$ is indeed a cofunctor. The construction implies that it is also bijective on arrows. 

\begin{remark} \label{rem:morphisms} If we drop the requirement that $F$ is bijective on arrows, $f_1$ is no longer a function, but rather a {\em relation}. Then we translate $F\colon C\rightsquigarrow D$ into a {\em proper and lower semicontinuous relational covering morphism} from $D$ to $C$. If $F$ is injective (resp. surjective) on arrows, the corresponding relational covering morphism turns out to be {\em at most single valued} (resp. {\em at least single valued}). Details on this approach to morphisms can be developed following the lines of \cite{KL17,K21}.
\end{remark}

\section{From a preBoolean restriction semigroup with local units to an ample category}\label{s:sem_to_cat}
\subsection{The category of germs of a  preBoolean restriction semigroup with local units}\label{subs:sem_cat}
Throughout this section, $S$ denotes a  preBoolean restriction semigroup with local units. Recall that $\widehat{P(S)}$ is the space of prime characters of the generalized Boolean algebra $P(S)$.  Let $s\in S$ and $\varphi \in \widehat{P(S)}$ be such that $\varphi(s^*)=1$. Define the map 
\begin{equation*}\label{eq:circ}
s\circ \varphi\colon P(S) \to {\mathbb B}, \,\, (s\circ\varphi) (e) = \varphi ((es)^*), \,\, e\in P(S).
\end{equation*}

\begin{lemma} \label{lem:prime}
The map $s\circ \varphi$ is a prime character.
\end{lemma}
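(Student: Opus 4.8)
The plan is to show that $s\circ\varphi$ is a non-zero morphism of generalized Boolean algebras, i.e.\ a prime character of $P(S)$ in the sense of Section~\ref{s:stone_gba}. The key observation is that $s\circ\varphi = \varphi\circ\psi_s$, where $\psi_s\colon P(S)\to P(S)$ is the map $e\mapsto (es)^*$ which was already shown to preserve finite joins. Since $\varphi$ is itself a morphism of generalized Boolean algebras, it preserves $0$, finite joins, finite meets and relative complements; hence it suffices to understand which of these operations $\psi_s$ respects and then compose.

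Next I would check that $\psi_s$ preserves the bottom element and finite meets, so that it becomes a $0$-preserving lattice homomorphism. For the bottom, $0s=0$ since $0$ is a left zero, so $\psi_s(0)=0^*=0$. For meets, recall that the meet in $P(S)$ is the product, so the point is the identity $((ef)s)^* = (es)^*(fs)^*$ for $e,f\in P(S)$. Using \eqref{eq:ample_r} in the form $fs = s(fs)^*$ gives $efs = (es)(fs)^*$, and then \eqref{eq:rule1r} applied with the projection $(fs)^*$ yields $(efs)^* = \bigl((es)(fs)^*\bigr)^* = (es)^*(fs)^*$, as required.

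It then follows that $g:=s\circ\varphi = \varphi\circ\psi_s$ preserves $0$, finite joins and finite meets. To upgrade this to a full morphism of generalized Boolean algebras I would invoke the general principle that any $\{0,\vee,\wedge\}$-homomorphism between generalized Boolean algebras automatically preserves relative complements: writing $c=e\setminus f$ for the complement of $e\wedge f$ in the Boolean interval $[0,e]$, one has $c\vee(e\wedge f)=e$ and $c\wedge(e\wedge f)=0$, and applying $g$ exhibits $g(c)$ as a complement of $g(e)\wedge g(f)$ in the Boolean interval $[0,g(e)]$ of $\mathbb{B}$, where complements are unique; hence $g(c)=g(e)\setminus g(f)$. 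Finally, $g$ is non-zero: since $S$ has local units there is $e\in P(S)$ with $es=s$, whence $g(e)=\varphi((es)^*)=\varphi(s^*)=1$ by the standing assumption $\varphi(s^*)=1$. Therefore $s\circ\varphi$ is a prime character.

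The only genuinely computational step is the meet-preservation identity $((ef)s)^*=(es)^*(fs)^*$, and this is precisely where the restriction axiom \eqref{eq:ample_r} (rather than the mere Ehresmann axioms) is needed; everything else is formal. The relative-complement clause might appear to demand separate effort, but because the codomain is $\mathbb{B}$ it comes for free once one has a lattice homomorphism sending $0$ to $0$, so I would not belabour it.
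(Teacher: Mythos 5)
Your proof is correct and follows essentially the same route as the paper's: non-vanishing via local units, the identity $(efs)^*=(es)^*(fs)^*$ obtained from \eqref{eq:ample_r} and \eqref{eq:rule1r} for meet preservation, and (BR1'), (BR3) and Lemma~\ref{lem:joins} for join preservation. The only cosmetic differences are that you factor $s\circ\varphi$ through the lattice homomorphism $e\mapsto (es)^*$ (citing the join-preservation lemma rather than re-deriving it) and that you make the preservation of $0$ and of relative complements explicit, which the paper leaves implicit.
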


\begin{proof} 
Since $S$ has local units, there is $e\in P(S)$ such that $es=s$. Then  $$(s\circ \varphi)(e) = \varphi((es)^*) = \varphi(s^*) =1,$$ so that the map $s\circ \varphi$ is non-zero. For any $e,f\in P(S)$ we have 
\begin{align*}
 (s\circ \varphi)(e) (s\circ \varphi)(f)   & = \varphi((es)^*)\varphi((fs)^*) & (\text{by the definition of } s\circ \varphi)\\
 & = \varphi((es)^*(fs)^*) & (\text{since } \varphi \text{ preserves the multiplication})\\
 & = \varphi(es(fs)^*) & (\text{by } \eqref{eq:axioms_star})\\
 & = \varphi(efs)^* & (\text{by } \eqref{eq:ample_r})\\
 & = (s\circ \varphi) (ef) & (\text{by the definition of } s\circ \varphi),
\end{align*}
so that $s\circ \varphi$ preserves the multiplication which is the meet operation in $P(S)$. 
It follows that $s\circ \varphi$ is monotonous, so that $(s\circ \varphi)(e\vee f) \geq (s\circ \varphi)(e)\vee (s\circ \varphi)(f)$ for all $e,f\in P(S)$. To show that $s\circ \varphi$ preserves joins, it suffices to suppose that $(s\circ\varphi) (e \vee f) = 1$ and show that $(s\circ\varphi)(e) = 1$ or $(s\circ\varphi)(f) = 1$. Since $es, fs\leq s$, the join $es\vee fs$ exists by (BR1'). Applying (BR3), Lemma \ref{lem:joins} and the fact that $\varphi$ preserves joins, we obtain: $$1=\varphi(((e\vee f)s)^*) = \varphi((es\vee fs)^*) = \varphi ((es)^* \vee (fs)^*) = \varphi((es)^*) \vee \varphi((fs)^*).$$
Hence either
$\varphi((es)^*)=1$ or $\varphi((fs)^*)=1$, that is, $(s\circ\varphi)(e) = 1$ or $(s\circ\varphi)(f) = 1$, as needed.
\end{proof}

Following Exel \cite{Exel08}, we set
$$
\Omega = \{(s,\varphi)\in S\times \widehat{P(S)}\colon \varphi\in D_{s^*}\}.
$$

For every $(s,\varphi), (t,\psi)\in \Omega$, we say that $(s,\varphi)\sim (t,\psi)$ if $\varphi=\psi$ and there is $u\leq s,t$ such that $(u,\varphi)\in \Omega$. The equivalence class of $(s,\varphi)$ will be called the {\em germ of} $s$ {\em at} $\varphi$ and will be denoted by $[s,\varphi]$.

\begin{lemma} If $(s,\varphi)\sim (t,\varphi)$ then $s\circ \varphi = t\circ \varphi$.
\end{lemma}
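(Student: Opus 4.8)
The plan is to exploit symmetry and reduce the claim to a statement about a single element dominating the germ. By hypothesis there is $u\le s,t$ with $(u,\varphi)\in\Omega$, that is, $\varphi(u^*)=1$. If I can show that $u\le s$ together with $\varphi(u^*)=1$ already forces $u\circ\varphi = s\circ\varphi$, then the same argument applied to $u\le t$ yields $u\circ\varphi = t\circ\varphi$, whence $s\circ\varphi = u\circ\varphi = t\circ\varphi$, as desired. Thus everything reduces to the following claim: if $u\le s$ in $S$ and $\varphi(u^*)=1$, then $(u\circ\varphi)(e) = (s\circ\varphi)(e)$ for every $e\in P(S)$.

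To prove the claim, fix $e\in P(S)$. Since $u\le s$, we have $u = su^*$, so that $eu = (es)u^*$ by associativity. Now I apply the identity \eqref{eq:rule1r} to the element $es\in S$ and the projection $u^*\in P(S)$ to obtain $(eu)^* = \bigl((es)u^*\bigr)^* = (es)^*u^*$. Because $\varphi$ is a morphism of generalized Boolean algebras it preserves the meet, which is the multiplication on $P(S)$, so
\[
(u\circ\varphi)(e) = \varphi\bigl((eu)^*\bigr) = \varphi\bigl((es)^*\bigr)\,\varphi(u^*) = \varphi\bigl((es)^*\bigr) = (s\circ\varphi)(e),
\]
where the penultimate equality uses $\varphi(u^*)=1$. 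This establishes the claim and hence the lemma.

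I do not anticipate any genuine obstacle here: the argument is a short computation resting on the order characterization $u = su^*$, the Ehresmann identity \eqref{eq:rule1r}, and the multiplicativity of the prime character $\varphi$. The only points requiring a little care are to invoke \eqref{eq:rule1r} with the correct grouping $(es)u^*$ rather than $e(su^*)$, and to recall that the condition $(u,\varphi)\in\Omega$ is precisely what supplies $\varphi(u^*)=1$, which is what collapses the extra factor in the final step.
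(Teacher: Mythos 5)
Your proposal is correct and follows essentially the same route as the paper: reduce to showing $u\circ\varphi = s\circ\varphi$ for $u\le s$ with $\varphi(u^*)=1$, write $u=su^*$, apply \eqref{eq:rule1r} to get $(eu)^*=(es)^*u^*$, and use multiplicativity of $\varphi$ together with $\varphi(u^*)=1$. The only cosmetic difference is that you spell out the symmetry step explicitly, whereas the paper leaves it implicit.
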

\begin{proof}
By assumption, there is a $u\leq s,t$ such that $\varphi\in D_{u^*}$. It suffices to show that $s\circ \varphi = u\circ \varphi$. Since $u\leq s$, we have $u=su^*$. Then for any $e\in P(S)$ we have 
$(u\circ \varphi)(e) = \varphi((eu)^*) = \varphi((esu^*)^*) = \varphi ((es)^*u^*) = \varphi ((es)^*)\varphi(u^*) = \varphi((es)^*) = (s\circ \varphi)(e)$, so that $s\circ \varphi = u\circ \varphi$, as required.
\end{proof}

Let $C_1 = \Omega/\sim$ be the set of germs and $C_0=\widehat{P(S)}$. We define the maps $d,r\colon C_1\to C_0$ and $u\colon C_0\to C_1$ by
\begin{equation}\label{eq:oper1}
d([s,\varphi]) = \varphi, \,\, r([s,\varphi]) = s\circ \varphi, \,\, u(\varphi) = [e,\varphi],
\end{equation}
where $e\in P(S)$ is such that $\varphi(e)=1$ (it is easy to see that $u(\varphi)$ is well defined).
We further set 
$$
C_2 = \{([s,\varphi], [t,\psi]) \in C_1\times C_1
\colon \varphi = t\circ \psi\}$$
and for $([s,\varphi], [t,\psi]) \in C_2$  define the map $m\colon C_2\to C_1$, $([s,\varphi], [t, \psi]) \mapsto [s,\varphi] \cdot [t, \psi]$ by
\begin{equation}\label{eq:oper2}
[s,\varphi] \cdot [t, \psi] = [st, \psi].
\end{equation}

\begin{lemma} The product $[s,\varphi] \cdot [t, \psi]$ is well defined.
\end{lemma}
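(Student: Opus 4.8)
The plan is to check the two things required for well-definedness: that $(st,\psi)$ genuinely lies in $\Omega$, so that $[st,\psi]$ is a legitimate germ; and that the class $[st,\psi]$ is independent of the chosen representatives $(s,\varphi)$ and $(t,\psi)$.

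First I would confirm membership in $\Omega$, i.e. that $\psi((st)^*)=1$. Here I would use the composability condition $\varphi=t\circ\psi$ defining $C_2$ together with the identity $(st)^*=(s^*t)^*$ from \eqref{eq:axioms_star}. Since $s^*\in P(S)$, the definition of $t\circ\psi$ yields $\psi((st)^*)=\psi((s^*t)^*)=(t\circ\psi)(s^*)=\varphi(s^*)$, and the last quantity is $1$ because $(s,\varphi)\in\Omega$. Hence $(st,\psi)\in\Omega$ and $[st,\psi]$ is a germ.

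Next I would establish independence of representatives. Suppose $(s,\varphi)\sim(s',\varphi)$ and $(t,\psi)\sim(t',\psi)$. The preceding lemma gives $t\circ\psi=t'\circ\psi$, so $\varphi=t'\circ\psi$ and the primed pair is composable as well; thus $[s't',\psi]$ is defined. Using the relation $\sim$, I choose $u\leq s,s'$ with $\varphi(u^*)=1$ and $v\leq t,t'$ with $\psi(v^*)=1$. I claim $w=uv$ witnesses $[st,\psi]=[s't',\psi]$. By Lemma \ref{lem:1}(2), $u\leq s$ gives $uv\leq sv$ and $v\leq t$ gives $sv\leq st$, whence $uv\leq st$; the identical argument gives $uv\leq s't'$.

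The one slightly delicate point — and the main obstacle — is verifying $\psi((uv)^*)=1$, which is what places $(uv,\psi)$ in $\Omega$ and thereby completes the proof. The key observation is that $v\leq t$ and $\psi(v^*)=1$ force $(v,\psi)\sim(t,\psi)$, so by the preceding lemma $v\circ\psi=t\circ\psi=\varphi$. Then, using $(uv)^*=(u^*v)^*$ and the defining formula for $v\circ\psi$ evaluated at $u^*\in P(S)$, I obtain $\psi((uv)^*)=\psi((u^*v)^*)=(v\circ\psi)(u^*)=\varphi(u^*)=1$, the final equality because $(u,\varphi)\in\Omega$. This recognition that the germ $v\circ\psi$ collapses to $\varphi$ is what reduces the computation to an already-known value and is the crux of the argument.
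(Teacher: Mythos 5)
Your proof is correct and follows essentially the same route as the paper: the membership computation $\psi((st)^*)=\psi((s^*t)^*)=(t\circ\psi)(s^*)=\varphi(s^*)=1$ is exactly the paper's argument. For the independence of representatives the paper merely says ``one easily checks''; your explicit verification via the common lower bound $uv$, using $v\circ\psi=t\circ\psi=\varphi$ to get $\psi((uv)^*)=\varphi(u^*)=1$, is a correct filling-in of that omitted detail.
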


\begin{proof} We have $\psi(t^*) = \varphi(s^*)=1$, so $\psi((st)^*) = \psi(s^*t)^* = (t\circ \psi)(s^*) = \varphi(s^*)=1$. Furthermore, if $(s,\varphi) \sim (p,\varphi)$ and $(t,\psi) \sim (q,\psi)$, one easily checks that
$(st, \psi) \sim (pq,\psi)$.
\end{proof}

\begin{lemma}
$C = (C_1, C_0, d,r,u,m)$ is a (small) category. 
\end{lemma}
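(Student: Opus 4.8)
The plan is to verify the five category axioms (DRU), (DP), (RP), (A) and (UL) in turn, the well-definedness of the multiplication $m$ and of the range map $r$ having already been established (the latter by the preceding lemma, the former by the well-definedness lemma for the product); well-definedness of $d$ and $u$ is routine, the latter being the independence of $u(\varphi) = [e,\varphi]$ of the choice of $e$ with $\varphi(e)=1$. The domain axiom (DP) is immediate from \eqref{eq:oper2}, since $d([s,\varphi]\cdot[t,\psi]) = d([st,\psi]) = \psi = d([t,\psi])$.

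The heart of the argument is a cocycle identity for the germ action, namely
$$(st)\circ\psi = s\circ(t\circ\psi)$$
whenever both sides are defined. I would prove this by evaluating both sides on an arbitrary $e\in P(S)$: the left-hand side equals $\psi((est)^*)$ and the right-hand side equals $\psi(((es)^*t)^*)$, and these agree by the Ehresmann identity $(xy)^* = (x^*y)^*$ from \eqref{eq:axioms_star} applied with $x = es$ and $y = t$. This identity yields the range axiom (RP) directly, since for a composable pair $([s,\varphi],[t,\psi])$ we have $\varphi = t\circ\psi$, whence $r([st,\psi]) = (st)\circ\psi = s\circ(t\circ\psi) = s\circ\varphi = r([s,\varphi])$. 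The same identity is what makes the reassociated products in (A) composable: for a composable triple $[s,\varphi], [t,\psi], [p,\chi]$ (so $t\circ\psi = \varphi$ and $p\circ\chi = \psi$) one checks that $(tp)\circ\chi = t\circ(p\circ\chi) = t\circ\psi = \varphi$, so that $[s,\varphi]\cdot[tp,\chi]$ is defined; then (A) collapses to the associativity of the multiplication in $S$, giving $[(st)p,\chi] = [s(tp),\chi]$.

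For (DRU) I would show that $e\circ\varphi = \varphi$ whenever $\varphi(e) = 1$: for $f\in P(S)$ one has $(e\circ\varphi)(f) = \varphi((fe)^*) = \varphi(fe) = \varphi(f)\varphi(e) = \varphi(f)$, using $(fe)^* = fe$ for projections and multiplicativity of $\varphi$; combined with $d(u(\varphi)) = \varphi$ this gives $d(1_\varphi) = r(1_\varphi) = \varphi$. The unit laws (UL) are the most delicate step and I expect them to be the main obstacle. For the right unit law, $[s,\varphi]\cdot u(\varphi) = [se,\varphi]$ with $se\leq s$ by the definition of the natural order and $(se,\varphi)\in\Omega$ because $(se)^* = s^*e$ by \eqref{eq:rule1r}, so the two germs coincide. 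For the left unit law one encounters $1_{r([s,\varphi])}\cdot[s,\varphi] = [fs,\varphi]$, and here the key point is to see that $fs\leq s$: this is precisely where the restriction (rather than merely Ehresmann) hypothesis enters, via \eqref{eq:ample_r}, which gives $fs = s(fs)^*\leq s$; together with $(fs,\varphi)\in\Omega$ and the existence of local units this forces $[fs,\varphi] = [s,\varphi]$. The remaining verifications, including the independence of $u$ of the chosen projection, are routine applications of this same collapsing principle for germs.
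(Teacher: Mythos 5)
Your proof is correct and follows essentially the same route as the paper's, which verifies only the left unit law $1_{r(x)}x=x$ explicitly (via $es\le s$ and $\varphi((es)^*)=1$, exactly as you do) and leaves the remaining axioms as similar or easier. Your explicit cocycle identity $(st)\circ\psi=s\circ(t\circ\psi)$, proved from $(xy)^*=(x^*y)^*$, is precisely the computation the paper leaves implicit for (RP) and (A), and your observation that \eqref{eq:ample_r} is what makes $fs\le s$ work in the left unit law is the same key point the paper highlights.
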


\begin{proof} The proof amounts to verifying the axioms of a small category, given in Subsection \ref{subs:discr_cat}. We check the identity $1_{r(x)}x=x$ which is a part of axiom (UL). Let $x=[s,\varphi]\in C_1$. Then $1_{r(x)} = ur(x) = [e, s\circ \varphi]$ where $(s\circ \varphi)(e)=1$. Then $1_{r(x)}x = [e, s\circ\varphi][s,\varphi] = [es,\varphi]$. Since $es\leq s$ and $\varphi((es)^*) = 1$, we have $[es,\varphi] = [s,\varphi]$, so that $1_{r(x)}x=x$. The remaining axioms can be checked similarly (or easier).
\end{proof}

We now topologize the category $C$. We topologize $C_0=\widehat{P(S)}$ with the locally compact Stone space topology induced by the Boolean algebra $P(S)$. Recall that the compact-open sets $D_e$, where $e$ runs through $P(S)$, form a basis of this topology. To topologize $C_1$,  for each $s\in S$ we set
$\Theta(s)$ to be the set of all the germs $[s,\varphi]\in C_1$ whose first component is~$s$. 

\begin{lemma}\label{lem:aux1}\mbox{}
\begin{enumerate}
\item If $s\leq t$ then $\Theta(s)\subseteq \Theta(t)$,
\item $d(\Theta(s)) = D_{s^*}$.
\end{enumerate}   
\end{lemma}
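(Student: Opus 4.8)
The plan is to prove both parts by directly unwinding the definitions of $\Theta(s)$, the germ equivalence $\sim$, the set $\Omega$, and the structure map $d$, relying only on the order-theoretic facts already established.

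For part (1), I would take an arbitrary germ $[s,\varphi]\in\Theta(s)$; by the definitions of $\Theta(s)$ and $\Omega$ this means $(s,\varphi)\in\Omega$, i.e.\ $\varphi(s^*)=1$. The goal is to realize this same germ with first component $t$, and the natural candidate is $[t,\varphi]$. First I would check that $(t,\varphi)\in\Omega$, so that this pair is legitimate: since $s\leq t$, Lemma \ref{lem:23} gives $s^*\leq t^*$, and as the prime character $\varphi$ is a morphism of generalized Boolean algebras and hence monotone, we get $\varphi(t^*)\geq\varphi(s^*)=1$, so $\varphi(t^*)=1$. Then the element $u=s$ witnesses $(s,\varphi)\sim(t,\varphi)$: indeed $s\leq s$ holds trivially, $s\leq t$ by hypothesis, and $(s,\varphi)\in\Omega$. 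Therefore $[s,\varphi]=[t,\varphi]\in\Theta(t)$, which establishes the inclusion $\Theta(s)\subseteq\Theta(t)$.

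Part (2) is an immediate computation from the definition of $d$. By definition, $\Theta(s)$ is the set of germs $[s,\varphi]$ with $(s,\varphi)\in\Omega$, that is, with $\varphi(s^*)=1$, and $d([s,\varphi])=\varphi$. Hence $d(\Theta(s))=\{\varphi\in\widehat{P(S)}\colon \varphi(s^*)=1\}$, which is exactly $D_{s^*}$ by the definition of $D_{s^*}$. Conversely, every $\varphi\in D_{s^*}$ arises in this way, since then $(s,\varphi)\in\Omega$, so $[s,\varphi]\in\Theta(s)$ and $d([s,\varphi])=\varphi$; thus the two sets coincide.

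Both arguments are essentially routine bookkeeping with the definitions. The only step requiring a moment's care is the verification in part (1) that $(t,\varphi)$ actually lies in $\Omega$, which is precisely where Lemma \ref{lem:23} together with the monotonicity of prime characters is needed: without first knowing $\varphi(t^*)=1$, the germ $[t,\varphi]$ would not even be defined, so this small check is what makes the inclusion meaningful rather than just formal.
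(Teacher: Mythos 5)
Your proof is correct and follows essentially the same route as the paper: part (1) uses $s^*\leq t^*$ (Lemma \ref{lem:23}) and monotonicity of the prime character $\varphi$ to get $\varphi(t^*)=1$, so that $[s,\varphi]=[t,\varphi]\in\Theta(t)$ with $s$ itself as the witnessing element, and part (2) is the same direct unwinding of the definitions of $d$ and $D_{s^*}$. The only difference is that you spell out the witness $u=s$ explicitly, which the paper leaves implicit.
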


\begin{proof} (1) If $[s,\varphi]\in \Theta(s)$ and $s\leq t$ then $1\geq \varphi(t^*)\geq \varphi(s^*) =1$, so $[s,\varphi] = [t,\varphi]$ and $[s,\varphi]\in \Theta(t)$. 

(2) By the definition, $d(\Theta(s)) = \{\varphi \colon [s,\varphi]\in \Theta(s)\} = \{\varphi\colon \varphi(s^*)=1\} = D_{s^*}$.
\end{proof}

\begin{lemma}
The collection of all the sets $\Theta(s)$, where $s$ runs through $S$, forms the basis of a topology on $C_1$. 
\end{lemma}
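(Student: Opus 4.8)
The plan is to check the two axioms for a family of sets to be a basis of a topology: that the sets $\Theta(s)$ cover $C_1$, and that every point of an intersection $\Theta(s)\cap\Theta(t)$ lies in some member of the family which is itself contained in $\Theta(s)\cap\Theta(t)$. The covering axiom is immediate, since an arbitrary germ $[s,\varphi]$ lies in $\Theta(s)$ by the very definition of $\Theta(s)$.

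For the refinement axiom, I would first observe that, because all representatives of a germ share the same second coordinate, a germ belongs to $\Theta(s)$ exactly when it has a representative of the form $(s,\varphi)$, i.e. when $(u,\varphi)\sim(s,\varphi)$ for one (equivalently any) of its representatives $(u,\varphi)$. Assuming $[u,\varphi]\in\Theta(s)\cap\Theta(t)$, the definition of $\sim$ then supplies elements $v_1\leq u,s$ and $v_2\leq u,t$ with $\varphi(v_1^*)=\varphi(v_2^*)=1$. The key step is to manufacture a single witness lying below $s$, $t$ and $u$ simultaneously; I would take $w=v_1v_2^*$. Substituting $v_1=uv_1^*$ and $v_2=uv_2^*$ and using that projections commute gives the symmetric presentation $w=uv_1^*v_2^*=v_2v_1^*$, and from the two forms $w=v_1v_2^*$ and $w=v_2v_1^*$ together with Lemma \ref{lem:1}(2) one reads off $w\leq s$, $w\leq u$ and $w\leq t$. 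Moreover $w^*=v_1^*v_2^*$ by \eqref{eq:rule1r}, whence $\varphi(w^*)=\varphi(v_1^*)\varphi(v_2^*)=1$; since also $w\leq u$, this gives $(w,\varphi)\sim(u,\varphi)$, so that $[u,\varphi]=[w,\varphi]\in\Theta(w)$. Finally $w\leq s,t$ and Lemma \ref{lem:aux1}(1) yield $\Theta(w)\subseteq\Theta(s)\cap\Theta(t)$, which completes the verification.

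The hard part is precisely the construction of the witness $w$. In an inverse or Boolean setting one would simply use a meet $v_1\wedge v_2$, but binary meets need not exist in a preBoolean restriction semigroup, so I replace the meet by the product $v_1v_2^*$ with a projection. The only genuinely nonobvious point is the inequality $w\leq t$, which is invisible from the defining expression $w=v_1v_2^*$ and becomes transparent only after the symmetric rewriting $w=v_2v_1^*$; everything else reduces to the commutativity of projections, the identity \eqref{eq:rule1r}, and the monotonicity rules of Lemma \ref{lem:1}.
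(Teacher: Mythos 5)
Your proof is correct and follows the same route as the paper: given a point $[u,\varphi]$ of $\Theta(s)\cap\Theta(t)$, one produces a single element below $s$, $t$ and $u$ whose $\Theta$-set is a neighbourhood of that point inside the intersection. The paper's proof merely asserts the existence of such a common lower bound $v\leq u,s,t$ with $\varphi(v^*)=1$, whereas you construct it explicitly as $w=v_1v_2^*=v_2v_1^*$ and check $\varphi(w^*)=\varphi(v_1^*)\varphi(v_2^*)=1$; this supplies, correctly, exactly the detail the paper leaves implicit.
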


\begin{proof}
Let $s,t\in S$ and $[u,\psi] \in \Theta(s)\cap \Theta(t)$. This means that $[u,\psi] = [s,\psi] = [t,\psi]$. Hence there is $v\leq u,s,t$ such that  
$[u,\psi] = [v,\psi]$. Then $\Theta(v)\subseteq \Theta(s)\cap \Theta(t)$ by Lemma \ref{lem:aux1}(1) and $[u,\psi] = [v,\psi]\in \Theta(v)$. The statement follows.
\end{proof}

\begin{lemma}
 $C = (C_1, C_0, d,r,u,m)$ is a topological category. \end{lemma}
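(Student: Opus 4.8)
The plan is to verify that each of the four structure maps $d,r,u,m$ is continuous with respect to the basis $\{D_e : e\in P(S)\}$ of $C_0$ and the basis $\{\Theta(s): s\in S\}$ of $C_1$, where $C_2$ carries the subspace topology from $C_1\times C_1$, so that the sets $(\Theta(a)\times\Theta(b))\cap C_2$ form a basis of $C_2$. Throughout I would lean on Lemma \ref{lem:aux1}, which records that $s\leq t$ implies $\Theta(s)\subseteq\Theta(t)$ and that $d(\Theta(s))=D_{s^*}$. In each case I would argue by the neighbourhood criterion: given a point of the preimage of a basic open set, I exhibit a basic open neighbourhood that is carried into the chosen basic open set.

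For $d$: if $[s,\varphi]\in d^{-1}(D_e)$ then $\varphi(s^*)=\varphi(e)=1$. Setting $u=se$ gives $u\leq s$ and, by \eqref{eq:rule1r}, $u^*=s^*e\leq e$ with $\varphi(u^*)=1$; hence $[s,\varphi]=[se,\varphi]\in\Theta(se)$ and, by Lemma \ref{lem:aux1}(2), $d(\Theta(se))=D_{s^*e}\subseteq D_e$. For $r$: if $[s,\varphi]\in r^{-1}(D_e)$ then $(s\circ\varphi)(e)=\varphi((es)^*)=1$. Put $f=(es)^*\in P(S)$ and $u=sf$. Since $es=s(es)^*\leq s$ by \eqref{eq:ample_r}, Lemma \ref{lem:23} gives $f\leq s^*$, so $u^*=s^*f=f$ by \eqref{eq:rule1r}, and thus $[s,\varphi]=[u,\varphi]\in\Theta(u)$. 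For any $\psi\in D_{u^*}=D_f$ one computes, using \eqref{eq:axioms_star}, that $eu=(es)(es)^*=es$, whence $(eu)^*=f$ and $(u\circ\psi)(e)=\psi(f)=1$; therefore $r(\Theta(u))\subseteq D_e$. For $u$: I would show $u^{-1}(\Theta(s))=\bigcup\{D_v : v\in P(S),\ v\leq s\}$, which is open. The point is that $u(\varphi)\in\Theta(s)$ means $u(\varphi)=[s,\varphi]$; since $u(\varphi)=[g,\varphi]$ for every projection $g$ with $\varphi(g)=1$ (well-definedness of $u$), this holds iff there is $v\leq g,s$ with $\varphi(v^*)=1$, and as any element below a projection is a projection, this is exactly the existence of a projection $v\leq s$ with $\varphi(v)=1$.

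The main work is the continuity of $m$, and this is the step I expect to be the real obstacle. Given $([s,\varphi],[t,\psi])\in C_2$ with $m([s,\varphi],[t,\psi])=[st,\psi]\in\Theta(w)$, the germ equality $[st,\psi]=[w,\psi]$ supplies some $v\leq st,w$ with $\psi(v^*)=1$. The difficulty is that $\Theta(w)$ need not equal $\Theta(st)$, so the obvious neighbourhood $(\Theta(s)\times\Theta(t))\cap C_2$, which $m$ sends into $\Theta(st)$, is generally too large; I must shrink the second factor so the product lands in $\Theta(v)\subseteq\Theta(w)$. To this end I would set $t'=tv^*$. Then $t'\leq t$, and since $v\leq st$ gives $v^*\leq (st)^*\leq t^*$ (using $(st)^*=(s^*t)^*$ and $s^*t=t(st)^*\leq t$ by \eqref{eq:axioms_star_restr}), we get $(t')^*=t^*v^*=v^*$ by \eqref{eq:rule1r}, whence $\psi((t')^*)=1$ and $[t',\psi]=[t,\psi]$. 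Moreover $st'=(st)v^*=v$. Consequently the original pair lies in the basic open set $(\Theta(s)\times\Theta(t'))\cap C_2$, and $m$ carries every pair of this set to a germ of the form $[st',\cdot]\in\Theta(st')=\Theta(v)\subseteq\Theta(w)$ by Lemma \ref{lem:aux1}(1). This establishes the continuity of $m$ and, together with the preceding paragraphs, shows that $C$ is a topological category.
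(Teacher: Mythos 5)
Your proposal is correct and follows essentially the same route as the paper: preimages of basic opens $D_e$ under $d$ and $r$ are covered by sets of the form $\Theta(se)$ and $\Theta(es)$, $u^{-1}(\Theta(s))$ is the union of the $D_v$ with $v\leq s$ a projection, and continuity of $m$ is obtained by shrinking the second factor of the basic neighbourhood (your $t'=tv^*$ plays exactly the role of the paper's $te$ with $ste=we$). The only difference is presentational — you argue pointwise via the neighbourhood criterion where the paper writes the preimages as explicit unions — and all your computations check out.
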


\begin{proof} We need to prove that the structure maps $d,r,u$ and $m$ are continuous. Let $D_e$ be a basic open set of $C_0$ (see Section \ref{s:stone_gba}). Since 
$$[s,\varphi]\in d^{-1}(D_e) \Leftrightarrow \varphi(e) = 1 \Leftrightarrow \varphi(s^*e)=1 \Leftrightarrow [s,\varphi] = [se, \varphi] \Leftrightarrow [s,\varphi]\in \Theta(se),$$
we have $d^{-1}(D_e) = \cup \{\Theta(se) \colon s\in S\} = \cup \{\Theta(t) \colon t^*\leq e\}$, which is an open set in $C_1$. Thus $d$ is continuous. Since
\begin{multline*}
[s,\varphi] \in r^{-1}(D_e) \Leftrightarrow s\circ \varphi \in D_e \Leftrightarrow (s\circ \varphi)(e) = 1 \Leftrightarrow \varphi((es)^*) = 1 \\
\Leftrightarrow [s,\varphi] = [es,\varphi] \Leftrightarrow [s,\varphi] \in \Theta(es),
\end{multline*}
we have $r^{-1}(D_e) = \cup\{\Theta(s) \colon es=s\}$, which is an open set in $C_1$, so $r$ is continuous. 
If $s\in S$, then $u^{-1}(\Theta(s)) = \{\varphi\in C_0\colon u(\varphi)=[e,\varphi] \in \Theta(s)\} = \{\varphi\in C_0\colon [e,\varphi] = [s,\varphi]\}$. This is the set of those $\varphi\in C_0$, for which there is $e\leq s$ with $\varphi(e)=1$. Being the union of all $D_e$ where $e\leq s$ is a projection, it is an open set.

We finally show that $m$ is continuous. Let $W$ be open in $C_1$, we show that $m^{-1}(W)$ is open in $C_2$. Let $([s,t\circ \psi], [t,\psi])\in m^{-1}(W)$. Since the sets $\Theta(s)$ form a basis of the topology on $C_1$, there is some $r\in S$ such that 
$[s,t\circ \psi][t,\psi] = [st,\psi] \in \Theta(r)\subseteq W$.
Hence $[st,\psi] = [r,\psi]$, so there is $e\in P(S)$, such that $ste = re$ and $\psi((ste)^*) =1$. Since $1=\psi((ste)^*) = \psi((st)^*)\psi(e)$, it follows that $\psi(e)=1$. Then $[st,\psi] = [r,\psi] = [re,\psi]$. The set $A=(\Theta(s)\times \Theta(te))\cap C_2$ is open in $C_2$ and contains $([s,te\circ \psi],[te,\psi])$. Moreover, for all $([s,te\circ \varphi][te,\varphi]) \in A$ we have $[ste, \varphi] = [re, \varphi]\in \Theta(re) \subseteq \Theta(r)$, which completes the proof. \end{proof}

From now on we consider  $C = (C_1, C_0, d,r,u,m)$ as a topological category with the defined topology.

\begin{lemma}\label{lem:theta_s}
Let $s\in S$. Then $\Theta(s)$ is a compact slice. 
\end{lemma}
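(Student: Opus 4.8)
The plan is to verify the three required properties separately: that $\Theta(s)$ is open, that it is a local section, and that it is compact. The first is immediate, since the sets $\Theta(t)$, $t\in S$, were declared to be a basis of the topology on $C_1$, so in particular $\Theta(s)$ is itself a basic open set.

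For the local section property, I would observe that by \eqref{eq:oper1} the restriction of $d$ to $\Theta(s)$ is the map $[s,\varphi]\mapsto \varphi$. If $d([s,\varphi]) = d([s,\psi])$ then $\varphi = \psi$ and hence $[s,\varphi]=[s,\psi]$; thus $d|_{\Theta(s)}$ is injective and $\Theta(s)$ is a slice.

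The compactness is the substantive part. By Lemma \ref{lem:aux1}(2) the map $\alpha\colon D_{s^*}\to \Theta(s)$, $\varphi\mapsto [s,\varphi]$, is a well-defined surjection, and I would show that it is continuous; then $\Theta(s)=\alpha(D_{s^*})$ is the continuous image of the compact-open set $D_{s^*}$ and hence compact. To check continuity it suffices to show that $\alpha^{-1}(\Theta(t))$ is open for each basic open set $\Theta(t)$. Now $\alpha(\varphi)=[s,\varphi]\in \Theta(t)$ precisely when $[s,\varphi]=[t,\varphi]$, i.e. when there is $u\leq s,t$ with $\varphi(u^*)=1$.

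The key step, and the main (mild) obstacle, is to turn this pointwise condition into an open neighbourhood. Given $\varphi_0\in \alpha^{-1}(\Theta(t))$, fix $u\leq s,t$ with $\varphi_0(u^*)=1$. Since $u\leq s$ gives $u^*\leq s^*$ by Lemma \ref{lem:23}, we have $D_{u^*}\subseteq D_{s^*}$, and for every $\varphi\in D_{u^*}$ the same $u\leq s,t$ witnesses $[s,\varphi]=[t,\varphi]$, so $\varphi\in\alpha^{-1}(\Theta(t))$. Hence $D_{u^*}$ is an open neighbourhood of $\varphi_0$ contained in $\alpha^{-1}(\Theta(t))$, which is therefore open. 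This completes the continuity of $\alpha$ and hence the compactness of $\Theta(s)$. As a by-product $\alpha$ and $d|_{\Theta(s)}$ are mutually inverse continuous bijections, so $\Theta(s)$ is in fact homeomorphic to $D_{s^*}$, although only compactness is needed for the lemma.
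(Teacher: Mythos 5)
Your proof is correct and follows essentially the same route as the paper: both arguments reduce compactness to the compactness of $D_{s^*}$ by establishing that $d|_{\Theta(s)}$ is a homeomorphism onto $D_{s^*}$ (the paper proves openness of $d|_{\Theta(s)}$ via $d(\Theta(t))=D_{t^*}$, while you prove the equivalent statement that the inverse $\varphi\mapsto[s,\varphi]$ is continuous, using the same germ computation). The slice part is verbatim the paper's observation that $d([s,\varphi])=\varphi$ makes $d|_{\Theta(s)}$ injective.
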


\begin{proof}
Since $d([s,\varphi]) = \varphi$, it is immediate that $d|_{\Theta(s)}\colon \Theta(s) \to d(\Theta(s))$ is a  bijection. Since $\Theta(s)$ is open, $d|_{\Theta(s)}$ is continuous. In addition, if $\Theta(t)\subseteq \Theta(s)$ then $d(\Theta(t)) = D_{t^*}$ is an open subset of $D_{s^*}$, so that the map $d|_{\Theta(s)}$ is open. Hence it is a homeomorphism.
\end{proof}

\begin{corollary}\label{cor:ample_cat}
 $C = (C_1, C_0, d,r,u,m)$ is an ample category.
\end{corollary}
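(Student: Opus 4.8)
The plan is to verify the two conditions in the definition of an ample category: that $C$ is \'etale (i.e.\ $d$ is a local homeomorphism onto the locally compact Hausdorff space $C_0$), and that compact slices form a basis of the topology on $C_1$. Both of these should follow almost directly from the preceding lemmas, so the main work is to assemble them cleanly rather than to prove anything substantially new.

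First I would record that $C_0 = \widehat{P(S)}$ is a locally compact Stone space, hence in particular a locally compact Hausdorff space: this is exactly the content of Theorem \ref{th:Stone_classical} applied to the generalized Boolean algebra $P(S)$, with the basis $\{D_e : e\in P(S)\}$ of compact-open sets. Next I would observe that $C$ has already been shown to be a topological category in the preceding lemma, so all structure maps are continuous. To establish that $d$ is a local homeomorphism, I would invoke Lemma \ref{lem:theta_s}: each $\Theta(s)$ is a compact slice, meaning $d|_{\Theta(s)}\colon \Theta(s)\to d(\Theta(s)) = D_{s^*}$ is a homeomorphism onto an open subset of $C_0$. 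Since every germ $[s,\varphi]$ lies in $\Theta(s)$, and the sets $\Theta(s)$ are open and cover $C_1$, this exhibits $d$ as a local homeomorphism. Thus $C$ is \'etale.

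It then remains to check the ampleness condition. The sets $\Theta(s)$ form a basis of the topology on $C_1$ by construction, and by Lemma \ref{lem:theta_s} each $\Theta(s)$ is a compact slice. Hence the compact slices already contain a basis of the topology, which is precisely the requirement in the definition of an ample category. I would conclude that $C$ is an ample category, combining the \'etale property with the existence of a basis of compact slices.

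Honestly, I do not expect any serious obstacle here: the corollary is a packaging of Lemma \ref{lem:theta_s} (compactness and sliceness of the basic opens $\Theta(s)$) together with the earlier identification of $C_0$ as a locally compact Stone space. The only point requiring a word of care is to note explicitly that the $\Theta(s)$ cover $C_1$ (every germ has a representative, so lies in some $\Theta(s)$) and are open, so that being locally homeomorphic on each $\Theta(s)$ globalizes to $d$ being a local homeomorphism; but this is immediate.
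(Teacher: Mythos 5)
Your proof is correct and matches the paper's (implicit) argument: the paper states this as an immediate corollary of Lemma \ref{lem:theta_s}, since the $\Theta(s)$ are open, cover $C_1$, form a basis, and are compact slices homeomorphic via $d$ to the compact-open sets $D_{s^*}$ of the locally compact Stone space $C_0=\widehat{P(S)}$. Your assembly of these facts is exactly the intended reasoning.
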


\subsection{The cases of a  range semigroup and of an \'etale range semigroup}
\begin{proposition}\label{prop:open1}
Suppose that $S$ is a preBoolean range semigroup. Then the map $r$ of the category $C = (C_1, C_0, d,r,u,m)$ is open and $D_{s^+}= r(\Theta(s))$.  \end{proposition}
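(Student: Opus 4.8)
The plan is to reduce the whole statement to the single set equality $r(\Theta(s)) = D_{s^+}$, valid for every $s\in S$, and then obtain openness of $r$ formally. By the definitions of $\Theta(s)$ and of $r$ one has $r(\Theta(s)) = \{s\circ\varphi \colon \varphi\in D_{s^*}\}$, and since the sets $\Theta(s)$ form a basis of the topology on $C_1$ while taking images commutes with unions, once $r(\Theta(s)) = D_{s^+}$ is established every open $W = \bigcup_i \Theta(s_i)$ satisfies $r(W) = \bigcup_i D_{s_i^+}$, which is open in $C_0$. So I would first record these reductions and then concentrate on proving $\{s\circ\varphi \colon \varphi\in D_{s^*}\} = D_{s^+}$.

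For the inclusion $\subseteq$ I would argue by a direct computation. For $\varphi\in D_{s^*}$, the identity $x^+x=x$ of \eqref{eq:axioms_plus} gives $s^+s = s$, hence $(s^+s)^* = s^*$, and therefore $(s\circ\varphi)(s^+) = \varphi((s^+s)^*) = \varphi(s^*) = 1$, so that $s\circ\varphi\in D_{s^+}$.

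The reverse inclusion $\supseteq$ is the heart of the matter. Given $\psi$ with $\psi(s^+) = 1$, I must produce $\varphi\in D_{s^*}$ with $s\circ\varphi = \psi$, and the idea is to build $\varphi$ from the ultrafilter $\psi^{-1}(1)$. Consider the set $\{(fs)^* \colon f\in\psi^{-1}(1)\}$; using \eqref{eq:axioms_star} and \eqref{eq:ample_r} one checks $(es)^*(fs)^* = (efs)^*$, so this set is closed under meets and generates a filter $F_0$ of $P(S)$. I would then extend $F_0$ to an ultrafilter and let $\varphi$ be the associated prime character. Since $s^* = (s^+s)^*\in F_0$ we get $\varphi\in D_{s^*}$, and by Lemma \ref{lem:prime} the map $s\circ\varphi$ is again a prime character. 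For every $f\in\psi^{-1}(1)$ we have $(fs)^*\in F_0\subseteq\varphi^{-1}(1)$, i.e. $(s\circ\varphi)(f) = 1$; thus $\psi^{-1}(1)\subseteq (s\circ\varphi)^{-1}(1)$. As $\psi^{-1}(1)$ is an ultrafilter and $(s\circ\varphi)^{-1}(1)$ is a proper filter, maximality forces equality, so $s\circ\varphi = \psi$ — exactly the maximality argument recalled after Theorem \ref{th:Stone_classical}.

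The step I expect to be the main obstacle is showing that $F_0$ is a \emph{proper} filter, equivalently that $fs\neq 0$ (hence $(fs)^*\neq 0$) for every $f\in\psi^{-1}(1)$; this is precisely where the range-semigroup hypothesis, i.e. the cosupport operation $^+$, enters. I would put $g = fs^+$. Then $g\le s^+$, and $g\neq 0$ because $g = f\wedge s^+$ lies in the ultrafilter $\psi^{-1}(1)$ (recall $s^+\in\psi^{-1}(1)$). By \eqref{eq:rule1l} together with $g\le s^+$ we get $(gs)^+ = gs^+ = g\neq 0$, so $gs\neq 0$ (as $0^+=0$); and $fs = f(s^+s) = gs$, whence $fs\neq 0$. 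Once this nontriviality is in place the ultrafilter extension and the maximality argument finish the proof, and it is worth stressing that the seemingly harder backward implication $(s\circ\varphi)(e)=1\Rightarrow\psi(e)=1$ never needs to be checked directly: it is delivered for free by the maximality of the ultrafilter $\psi^{-1}(1)$.
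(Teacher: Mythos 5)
Your proof is correct and follows the same core construction as the paper: both arguments establish $r(\Theta(s))=D_{s^+}$, prove the easy inclusion via $(s\circ\varphi)(s^+)=\varphi((s^+s)^*)=1$, and for the reverse inclusion build the filter generated by $\{(fs)^*\colon \psi(f)=1\}$ (closed under meets because $(es)^*(fs)^*=(efs)^*$) and extend it to a prime filter giving the desired $\varphi$. The two places where you diverge are both improvements in rigour or economy. First, you explicitly verify that the generated filter is \emph{proper} — via $(fs)^+=(fs^+s)^+=fs^+\neq 0$ — which is exactly where the hypothesis that $S$ is a \emph{range} semigroup (existence of $^+$) is used; the paper's proof extends $F$ to a prime filter without recording this check, although it is needed for the extension to exist. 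Second, your endgame differs: the paper verifies $s\circ\varphi=\psi$ pointwise, handling the implication $\psi(e)=0\Rightarrow\varphi((es)^*)=0$ by choosing $f$ with $ef=0$, $\psi(f)=1$ and deriving a contradiction from $\varphi((efs)^*)=0$; you instead observe that $\psi^{-1}(1)\subseteq(s\circ\varphi)^{-1}(1)$ and invoke the coincidence of prime filters with ultrafilters (already recalled after Theorem \ref{th:Stone_classical}) to force equality. Your route is slightly shorter and avoids the complementation argument, at the cost of leaning on the ultrafilter characterization; the paper's is self-contained at that step. Both are valid.
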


\begin{proof}
We show that $r(\Theta(s)) = D_{s^+}$. Let $\varphi\in D_{s^*}$ and $e\in P(S)$. Since $(s\circ\varphi)(s^+) = \varphi((s^+s)^*) = 1$, we have $s\circ\varphi \in D_{s^+}$. Hence $r(\Theta(s))\subseteq D_{s^+}$.

For the reverse inclusion, let $\psi \in D_{s^+}$ and define 
$F=\{(es)^*\colon \psi(e)=1\}^{\uparrow}$ (where  $A^{\uparrow} = \{b\colon b\geq a \text{ for some }a\in A\}$ denotes the upward closure of $A$). If $x,y\in F$ with $x\geq (es)^*$, $y\geq (fs)^*$ with $\psi(e)=\psi(f) = 1$, we have
that $xy\geq (es)^*(fs)^* = (es(fs)^*)^*=(efs)^*$ and $\psi(ef) = \psi(e)\psi(f) = 1$, so that $xy\in F$. It follows that $F$ is a filter. Let $A\supseteq F$ be a prime filter and let $\varphi\colon P(S)\to {\mathbb B}$ be the prime character with $A=\varphi^{-1}(1)$. We show that $s\circ \varphi = \psi$. Let $e\in P(S)$ and show that
$\varphi((es)^*) = \psi(e)$. If $\psi(e)=1$ then $(es)^*\in F\subseteq A$, so that $\varphi((es)^*)=1$. Suppose that $\psi(e) = 0$ but $\varphi((es)^*)=1$. Let $f\in P(S)$ be such that $ef=0$ and $\psi(f) = 1$. Then $(fs)^* \in F$ so that $\varphi((fs)^*)=1$. But this yields
$1 = \varphi((es)^*)\varphi((fs)^*) = \varphi((es)^*(fs)^*) = \varphi((efs)^*) =0$, which is a contradiction. It follows that if $\psi(e) = 0$ then necessarily also $\varphi((es)^*)=0$, and the equality $s\circ \varphi = \psi$ is shown. Hence $\psi = r([s, \varphi])$, so that $D_{s^+}\subseteq r(\Theta(s))$.

It follows that $D_{s^+}= r(\Theta(s))$. Since  the sets $\Theta(s)$ form a basis of the topology on $C_1$ and $D_{s^+}$ is open, the map  $r$ is open.
\end{proof}

\begin{lemma}\label{lem:8j1}
Let $S$ be a preBoolean range semigroup and $s\in S$ be bideterministic. Then $\Theta(s)$ is a bislice.    
\end{lemma}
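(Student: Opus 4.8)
The plan is to exploit that $\Theta(s)$ is already known to be a compact slice by Lemma \ref{lem:theta_s}, so it remains only to show it is a coslice, i.e. that $r$ restricted to $\Theta(s)$ is injective; since $\Theta(s)$ is open, this exhibits it as an open local cosection and hence as a bislice. Two germs $[s,\varphi],[s,\psi]\in\Theta(s)$ coincide precisely when $\varphi=\psi$, so injectivity of $r|_{\Theta(s)}$ is exactly the implication
$$
s\circ\varphi = s\circ\psi \implies \varphi=\psi \qquad (\varphi,\psi\in D_{s^*}).
$$
Recalling that $(s\circ\varphi)(e)=\varphi((es)^*)$, the hypothesis reads $\varphi((es)^*)=\psi((es)^*)$ for all $e\in P(S)$, and the task is to upgrade this to equality of $\varphi$ and $\psi$ on all of $P(S)$. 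The idea is to show that the range character $\chi:=s\circ\varphi$ completely determines the domain character $\varphi$.

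The key device is a pair of mutually inverse order isomorphisms between the principal down-sets $\{e\in P(S):e\le s^+\}$ and $\{g\in P(S):g\le s^*\}$, given by $\theta(e)=(es)^*$ and $\theta'(g)=(sg)^+$. First I would check $\theta'\theta=\mathrm{id}$: for $e\le s^+$, identity \eqref{eq:ample_r} gives $s(es)^*=es$, whence by \eqref{eq:rule1l} and $e\le s^+$ we get $\theta'(\theta(e))=(es)^+=es^+=e$. The reverse composite $\theta\theta'=\mathrm{id}$ is exactly where bideterminism enters: since every element of a range semigroup is deterministic (Corollary \ref{cor:det}), the hypothesis that $s$ is bideterministic reduces to $s$ being codeterministic, which yields $(sg)^+s=sg$; hence for $g\le s^*$ we obtain $\theta(\theta'(g))=((sg)^+s)^*=(sg)^*=s^*g=g$ by \eqref{eq:rule1r}. (That $\theta$ and $\theta'$ land in the stated down-sets is a routine consequence of \eqref{eq:ample_r}, \eqref{eq:rule1l} and Lemma \ref{lem:23}.)

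With these bijections in hand, I can reconstruct $\varphi$ from $\chi$. For $g\le s^*$ put $e=\theta'(g)=(sg)^+$; then $(es)^*=\theta(\theta'(g))=g$, so $\varphi(g)=\varphi((es)^*)=\chi(e)=\chi((sg)^+)$. Since $\varphi(s^*)=1$, every $g\in P(S)$ satisfies $\varphi(g)=\varphi(gs^*)$ with $gs^*\le s^*$, so $\varphi$ is completely determined by $\chi$ via $\varphi(g)=\chi((sgs^*)^+)$. Consequently $s\circ\varphi=s\circ\psi$ forces $\varphi=\psi$, which proves that $r|_{\Theta(s)}$ is injective and hence that $\Theta(s)$ is a bislice. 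I expect the main obstacle to be isolating that codeterminism supplies precisely the inverse $\theta\theta'=\mathrm{id}$ needed to recover the domain character from the range character; the remaining manipulations are routine applications of the biEhresmann axioms.
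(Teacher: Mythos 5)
Your proof is correct and rests on the same essential mechanism as the paper's: codeterminism of $s$ supplies, for each projection $e$, a projection $f=(se)^+$ with $fs=se$, and the computation $((se)^+s)^*=(se)^*=s^*e$ then shows that $s\circ\varphi$ determines $\varphi$ on $D_{s^*}$. The paper argues contrapositively, separating two distinct characters by a single such $f$, whereas you package the same idea as the mutually inverse maps $e\mapsto(es)^*$, $g\mapsto(sg)^+$ between the down-sets of $s^+$ and $s^*$ together with the explicit reconstruction formula $\varphi(g)=\chi((sgs^*)^+)$ --- a slightly stronger statement, but the same underlying argument.
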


\begin{proof}  We need only to check that $\Theta(s)$ is a local cosection. Suppose $[s,\varphi], [s,\psi]\in \Theta(s)$ are different and show that $r([s,\varphi])\neq r([s,\psi])$. Since $\varphi\neq \psi$, there is $e\in P(S)$ such that $\varphi(e)=1$ and $\psi(e)=0$. Since $s$ is bideterministic, there is $f\in P(S)$ such that $se = fs$. Then $r([s,\varphi])(f) = (s\circ \varphi)(f) = \varphi((fs)^*) = \varphi((se)^*) = \varphi(s^*e) = 1$ and similarly $r([s,\psi])(f) = \psi(s^*e) = 0$.
Hence $r([s,\varphi])\neq r([s,\psi])$, as needed.
\end{proof}

We now turn to \'etale preBoolean range semigroups.

\begin{proposition} \label{prop:lh1}
Suppose that $S$ is an \'etale preBoolean range semigroup. 
Then the map $r$ of the category $C = (C_1, C_0, d,r,u,m)$ is a local homeomorphism. Furthermore, the category $C$ coincides with the category of germs of the preBoolean birestriction semigroup ${\mathscr{BD}}(S)$.
\end{proposition}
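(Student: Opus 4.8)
The plan is to treat the two assertions separately, with a single decomposition of basic opens doing most of the work in both. For the first assertion, I would start from the observation that if $s=s_1\vee\cdots\vee s_n$ is a finite join of bideterministic elements (as the étale hypothesis of Definition~\ref{def:etale} provides), then $\Theta(s)=\Theta(s_1)\cup\cdots\cup\Theta(s_n)$. The inclusion $\supseteq$ is immediate from $s_i\le s$ and Lemma~\ref{lem:aux1}(1); for $\subseteq$, given $[s,\varphi]\in\Theta(s)$ I would use $s^*=s_1^*\vee\cdots\vee s_n^*$ (Lemma~\ref{lem:joins}(2), extended to finite joins) together with the fact that a prime character preserves finite joins to find an index $i$ with $\varphi(s_i^*)=1$, whence $[s,\varphi]=[s_i,\varphi]\in\Theta(s_i)$. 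Since the sets $\Theta(s)$ form a basis of $C_1$, it follows that the bislices $\Theta(s_i)$ with $s_i$ bideterministic already form a basis. For each bideterministic $v$, Lemma~\ref{lem:8j1} makes $\Theta(v)$ a bislice, i.e.\ $r|_{\Theta(v)}$ is injective; combined with the openness of $r$ and the identity $r(\Theta(v))=D_{v^+}$ from Proposition~\ref{prop:open1}, the restriction $r|_{\Theta(v)}\colon\Theta(v)\to D_{v^+}$ is a continuous open bijection onto an open set, hence a homeomorphism. As these $\Theta(v)$ cover $C_1$, the map $r$ is a local homeomorphism.

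For the second assertion I would first record that ${\mathscr{BD}}(S)$ is a preBoolean birestriction semigroup with local units. By Corollary~\ref{cor:det} it is a birestriction $(2,1,1)$-subalgebra closed under multiplication with $P({\mathscr{BD}}(S))=P(S)$, so (BBR2) is inherited and local units are inherited (for $s\in{\mathscr{BD}}(S)$ a left local unit $e\in P(S)=P({\mathscr{BD}}(S))$ already lies in ${\mathscr{BD}}(S)$). The key point for (BBR1') and the distributive law (BBR3) is that joins computed in $S$ stay inside ${\mathscr{BD}}(S)$: if $s,t\in{\mathscr{BD}}(S)$ have a common upper bound $w\in{\mathscr{BD}}(S)$, then $s\vee t$ exists in $S$ by (BR1') and satisfies $s\vee t\le w$, so $s\vee t=w(s\vee t)^*$ with $(s\vee t)^*\in P({\mathscr{BD}}(S))$; hence $s\vee t\in{\mathscr{BD}}(S)$ and it is the least upper bound there as well. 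The same product-with-a-projection argument applied to the identity $(s\vee t)u=su\vee tu$ (valid in $S$ by (BR3)) shows $su\vee tu=(s\vee t)u\in{\mathscr{BD}}(S)$, giving (BBR3). Consequently the category of germs of ${\mathscr{BD}}(S)$ is defined, and since $P({\mathscr{BD}}(S))=P(S)$ its object space $\widehat{P({\mathscr{BD}}(S))}$ is literally $C_0=\widehat{P(S)}$.

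The remaining task is to identify the arrow spaces. I would consider the obvious map sending a germ $[s,\varphi]$ of ${\mathscr{BD}}(S)$ (with $s\in{\mathscr{BD}}(S)$) to the germ $[s,\varphi]$ of $S$; the two germ equivalences agree because the natural order on ${\mathscr{BD}}(S)$ is the restriction of that on $S$. Surjectivity is exactly the étale decomposition above: every germ $[s,\varphi]$ of $S$ equals some $[s_i,\varphi]$ with $s_i$ bideterministic. For injectivity, if $s,t\in{\mathscr{BD}}(S)$ and $[s,\varphi]=[t,\varphi]$ in $C_1$, pick $u\le s,t$ in $S$ with $\varphi(u^*)=1$; then $u=su^*$ is a product of the bideterministic element $s$ with the projection $u^*$, so $u\in{\mathscr{BD}}(S)$ and it witnesses the equality already in the germ category of ${\mathscr{BD}}(S)$. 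The domain, range, unit and multiplication maps are given by the same formulas and ${\mathscr{BD}}(S)$ is closed under them, so this bijection respects the categorical structure; and since it carries each basic open $\Theta^{{\mathscr{BD}}}(v)$ onto $\Theta(v)$ for $v\in{\mathscr{BD}}(S)$, and the $\Theta(v)$ form a basis of $C_1$ by the first part, it is a homeomorphism. Thus $C$ coincides with the category of germs of ${\mathscr{BD}}(S)$.

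The main obstacle I anticipate is not any single hard estimate but making the two layers of collapsing match up: that the germ relation of $S$, restricted to bideterministic first coordinates, is precisely the germ relation of ${\mathscr{BD}}(S)$, and that every $S$-germ is represented by a bideterministic element. Both hinge on the same two facts—that ${\mathscr{BD}}(S)$ absorbs products with projections, and the decomposition $\Theta(s)=\bigcup_i\Theta(s_i)$—so once these are in place the verifications become routine. A secondary point to handle carefully is confirming that the joins used for (BBR1') and (BBR3) in ${\mathscr{BD}}(S)$ genuinely \emph{coincide} with those of $S$, rather than merely existing separately, since this is what allows the multiplication and topology to transport without change.
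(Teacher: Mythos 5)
Your proposal is correct and follows essentially the same route as the paper: use the \'etale decomposition $s=s_1\vee\cdots\vee s_n$ together with $s^*=s_1^*\vee\cdots\vee s_n^*$ and primeness of $\varphi$ to replace any germ by one with a bideterministic representative, then invoke Lemma~\ref{lem:8j1} and the openness of $r$ from Proposition~\ref{prop:open1} to conclude that $r$ restricted to each $\Theta(v)$ with $v$ bideterministic is a homeomorphism onto an open set. The paper disposes of the second assertion in one sentence, whereas you carefully verify that ${\mathscr{BD}}(S)$ is preBoolean, that its germ relation is the restriction of that of $S$, and that the topologies match; this is welcome extra detail but not a different argument.
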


\begin{proof}
Let $[s,\varphi]\in C_1$. We first show that $s$ can be chosen bideterministic. Since $S$ is \'etale, there are bideterministic elements $s_1,\dots, s_n$ where $n\geq 1$ such that $s= s_1\vee \cdots \vee s_n$. By Lemma \ref{lem:joins} we have
$s^* = s_1^* \vee \cdots \vee s_n^*$. Since $\varphi(s^*)=1$, there is $i$ such that $\varphi(s_i^*) =1$, so that $[s,\varphi] = [s_i, \varphi]$, as needed. It follows that $C$ is the category of germs of ${\mathscr{BD}}(S)$.

The map $r$ is continuous by assumption and open by Proposition \ref{prop:open1}. Furthermore, it is surjective since for $\varphi\in \widehat{P(S)}$ there is an $e$ with $\varphi(e)\neq 0$, so that the germ $[e,\varphi]$ is well defined and $r([e,\varphi]) = \varphi$. 
Let $[s,\varphi]\in C_1$ where $s$ is bideterministic. Then $\Theta(s)$ is a neighborhood of $[s,\varphi]$ and is a bislice by Lemma \ref{lem:8j1}. It follows that $r|_{\Theta(s)}$ is injective, and so is a homeomorphism.
\end{proof}

\subsection{Morphisms}\label{subs:morphisms2}
Let $S,T$ be preBoolean restriction semigroups with local units and let $f\colon S\to T$ be a morphism.
Let $C^S = (C_1^S, C_0^S)$ and $C^T = (C_1^T, C_0^T)$ be their respective categories of germs. 

Since $f|_{P(S)}\colon P(S) \to P(T)$ is a proper morphism of generalized Boolean algebras, Theorem \ref{th:Stone_classical} implies that the map $(f|_{P(S)})^{-1}\colon C_0^T \to C_0^S$, $\varphi \mapsto \varphi f|_{{P(S)}}$ is a proper and continuous map. We put $\bar{f}(\varphi) =  \varphi f|_{P(S)}$. We  construct a cofunctor $(\mu, \bar{f}, \rho)$ from $C^S$ to $C^T$. We first define the action $(\mu, \bar{f})$ of $C^S$ on $C_0^T$.
Since $\bar{f}(\psi) = \psi f|_{P(S)} = d([s, \psi f|_{P(S)}]))$, the elements of $C_1^S \times_{\bar{f},d} C_0^T$ are of the form $([s,\psi f|_{P(S)}], \psi)$. Then $1= \psi f(s^*) = \psi(f(s)^*)$, so that $[f(s), \psi] \in C_1^T$. We put $\mu([s,\psi f|_{P(S)}], \psi) = r([f(s), \psi]) = f(s)\circ \psi$.  We now define the functor $\rho\colon C^S\rtimes C_0^T \to C^T$ by $\rho_0(x)=x$ for all $x\in C_0^T$ and $\rho_1([s,\psi f|_{P(S)}], \psi) = [f(s),\psi]$. It is routine to verify that $(\mu, \bar{f},\rho)$ is a cofunctor.

\begin{lemma} \label{lem:9ja}
Let $S,T$ be \'etale preBoolean range semigroups and $f\colon S\to T$ a morphism which preserves bideterministic elements. Then the action $(\mu, \bar{f})$ is injective.   \end{lemma}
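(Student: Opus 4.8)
The plan is to unpack what injectivity of the action $(\mu, \bar{f})$ means in this concrete setting and then reduce everything to the codeterministic property captured by Lemma \ref{lem:8j1}. By the definition of an injective action, I must show: for any two distinct prime characters $\psi_1, \psi_2 \in C_0^T = \widehat{P(T)}$ and any germ $a \in C_1^S$ with $d(a) = \bar{f}(\psi_1) = \bar{f}(\psi_2)$, one has $a \cdot \psi_1 \neq a \cdot \psi_2$. Writing $a = [s, \varphi]$, the condition $d(a) = \bar{f}(\psi_i)$ forces $\varphi = \psi_1 f|_{P(S)} = \psi_2 f|_{P(S)}$, and unpacking the definition of $\mu$ gives $a \cdot \psi_i = f(s) \circ \psi_i$. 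So the goal becomes: $\psi_1 \neq \psi_2$ implies $f(s) \circ \psi_1 \neq f(s) \circ \psi_2$.

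First I would reduce to the case where $s$ is bideterministic. Since $S$ is \'etale, write $s = s_1 \vee \cdots \vee s_n$ with each $s_j$ bideterministic; by Lemma \ref{lem:joins} we have $s^* = s_1^* \vee \cdots \vee s_n^*$, and as $\varphi(s^*) = 1$ and $\varphi$ preserves finite joins, some $\varphi(s_j^*) = 1$. Then $[s, \varphi] = [s_j, \varphi]$, so by well-definedness of the action I may replace $s$ by $s_j$ and assume $s$ itself is bideterministic; this is exactly the reduction carried out in the proof of Proposition \ref{prop:lh1}. Since $f$ preserves bideterministic elements, $t := f(s)$ is bideterministic in $T$. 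Moreover $\psi_i(t^*) = \psi_i(f(s^*)) = (\psi_i f|_{P(S)})(s^*) = \varphi(s^*) = 1$, so the germs $[t, \psi_1]$ and $[t, \psi_2]$ are well-defined elements of $\Theta(t) \subseteq C_1^T$.

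The punchline is then to recognize that $a \cdot \psi_i = t \circ \psi_i = r([t, \psi_i])$, and that the two germs $[t, \psi_1]$ and $[t, \psi_2]$ are distinct because $d([t, \psi_i]) = \psi_i$ and $\psi_1 \neq \psi_2$. Since $t$ is bideterministic, Lemma \ref{lem:8j1} tells us that $\Theta(t)$ is a bislice, in particular a local cosection, so $r$ restricted to $\Theta(t)$ is injective. Applying this injectivity to the two distinct germs in $\Theta(t)$ yields $r([t, \psi_1]) \neq r([t, \psi_2])$, that is, $a \cdot \psi_1 \neq a \cdot \psi_2$, as required.

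The step I expect to be the real content is the reduction to a bideterministic representative, since this is where both hypotheses --- that $S$ is \'etale and that $f$ preserves bideterministic elements --- are used: without them $f(s)$ need not be bideterministic and $\Theta(f(s))$ need not be a local cosection, so $r$ could fail to separate the two germs. Once past this reduction, the conclusion is a direct application of Lemma \ref{lem:8j1}; the only point requiring a line of care is verifying that replacing $s$ by $s_j$ does not change the value of the action, which follows from the fact that $(\mu, \bar{f}, \rho)$ is a well-defined cofunctor.
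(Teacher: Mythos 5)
Your proof is correct and follows essentially the same route as the paper's: reduce to a bideterministic representative via \'etaleness, use that $f$ preserves bideterministic elements, and apply Lemma \ref{lem:8j1} to conclude that $r$ is injective on $\Theta(f(s))$ and hence separates the two germs. The only difference is presentational — the paper argues by contradiction (a failure of injectivity would force $\Theta(f(s))$ not to be a bislice, hence $f(s)$ not bideterministic), whereas you argue directly — but the content is identical.
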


\begin{proof}
From the converse, suppose $(\mu, \bar{f})$ is not injective. Then there are distinct $\varphi,\psi\in C_0^T$ with $\varphi f|_{P(S)} = \psi f|_{P(S)}$ and $s\in S$ with $\varphi f(s^*)=1$ such that $\mu([s,\psi f|_{P(S)}],\psi) = \mu([s,\varphi f|_{P(S)}],\varphi)$. Since $S$ is \'etale, we have that $s=s_1\vee \cdots\vee s_n$ with all $s_i$ bideterministic, which implies that $[s,\varphi f|_{P(S)}] = [s_i, \varphi f|_{P(S)}]$ for some $i$. So we can assume that $s$ is bideterministic. The assumption implies that $f(s)\circ \varphi = f(s)\circ \psi$.  We have that $[f(s),\varphi], [f(s),\psi]\in \Theta(f(s))$ are distinct but 
$r([f(s),\varphi]) = r([f(s),\psi])$, so that $\Theta(f(s))$ is not a bislice. By Lemma \ref{lem:8j1} then $f(s)$ is not bideterministic, which is a contradiction. 
\end{proof}

\begin{remark} \label{rem:r1} Let $S,T$ be Boolean birestriction semigroups and $f\colon S\to T$ morphism between them. Since $S$ and $T$ are \'etale preBoolean range semigroups and all elements are bideterministic, Lemma \ref{lem:9ja} implies that the  cofunctor $(\mu, \bar{f}, \rho)$ is injective.
\end{remark}

\section{The main results}\label{s:dualities}
\subsection{The adjunction and the equivalences} \label{subs:cat1}
We define morphisms of type $i=1,2,3,4$ between preBoolean restriction semigroups with local units as follows:

\begin{center}
\begin{tabular}{|c|l|} 
 \hline
 type 1 & morphisms \\ 
 \hline
 type 2 & weakly meet-preserving morphisms \\ 
 \hline
 type 3 & proper morphisms \\ 
 \hline
  type 4 & proper and weakly meet-preserving morphisms \\
 \hline
\end{tabular}
\end{center}

We also define morphisms of type $i$ between ample topological categories:

\begin{center}
\begin{tabular}{|c|l|} 
 \hline
 type 1 & cofunctors \\ 
 \hline
 type 2 & cofunctors which are injective on arrows \\ 
 \hline
 type 3 & cofunctors which are surjective on arrows \\ 
 \hline
  type 4 & cofunctors which are bijective on arrows \\
 \hline
\end{tabular}
\end{center}

We aim to show that the passages from a Boolean restriction semigroup with local units to its category of germs and from an ample category to its Boolean restriction semigroup of compact slices are mutually inverse. However, we start from a more general adjunction result with preBoolean restriction semigroups on the algebraic side. This adjunction will yield an equivalence not only for Boolean restriction semigroups with local units but also for Boolean  birestriction semigroups on the algebraic side.

Let ${\mathsf{PBR}}_i$  denote the category of preBoolean restriction semigroups with local units and morphisms of type $i$
and ${\mathsf{AC}}_i$ the category of ample topological categories and morphisms of type $i$. The constructions in Section \ref{s:cat_to_sem} define the functors
\begin{equation}\label{eq:functor1}
{\mathsf{S}}_i\colon {\mathsf{AC}}_i \to {\mathsf{PBR}}_i
\end{equation}
which assign to an ample category $C$ the Boolean restriction semigroup $C^a$ and to a morphism $F\colon C\rightsquigarrow D$ the morphism $F_*\colon C^a \to D^a$.
The constructions of Section \ref{s:sem_to_cat} define the functors \begin{equation}\label{eq:functor1a}
{\mathsf C}_i\colon {\mathsf{PBR}}_i \to \mathsf{AC}_i
\end{equation}
which assign to a preBoolean restriction semigroup with local units  its category of germs and to a morphism $S\to T$ the cofunctor ${\mathsf C}(S)\rightsquigarrow {\mathsf C}(T)$ constructed in subsection \ref{subs:morphisms2}.

In the category ${\mathsf{Top}}_1$ of topological categories and functors between them, an isomorphism from $C$ to $D$ is a functor $f=(f_1,f_0)\colon C\to D$ where $f_1\colon C_1\to D_1$ and $f_0\colon C_0\to D_0$ are homeomorphisms which commute with the structure maps. We now define the appropriate notion of an isomorphism in the category ${\mathsf{Top}}_2$ of topological categories and cofunctors between them.

Let $f=(f_1,f_0)\colon C\to D$ be an isomorphism in the category ${\mathsf{Top}}_1$. Then the category $C\rtimes D_0$ is isomorphic to $C$ and we have the action $(\mu, f_0^{-1})$ of $C\rtimes D_0$ on $D_0$ given by $\mu(s, x) = f_0(s\circ f_0^{-1}(x))$.
There is an isomorphism $\rho\colon C\rtimes D_0\to D$ given on arrows by $(s,x)\mapsto f_1(s)$ and which is identical on objects. This suggests to call a cofunctor $F=(\mu, f, \rho)\colon C\rightsquigarrow D$ an {\em isomorphism} if $f\colon D_0\to C_0$ is a homeomorphism, the action of $C\rtimes D_0$ (which is isomorphic to $C$) on $C_0$ is equivalent, via $f$, to the natural action of $C$ on $C_0$, and the functor $\rho \colon C\rtimes D_0 \to D$ is an isomorphism. The definition implies that the categories $C$ and $D$ are isomorphic in the category ${\mathsf{Top}}_1$ if and only if they are isomorphic in ${\mathsf{Top}}_2$.

\begin{theorem} (Adjunction theorem) \label{th:adjunction1}\mbox{}
\begin{enumerate}
\item
For each $i=1,2,3,4$ the functors
$$
{\mathsf{C}}_i\colon {\mathsf{PBR}}_i \mathrel{\mathop{\rightleftarrows}} {\mathsf{AC}}_i \colon {\mathsf{S}}_i
$$
establish an adjunction ${\mathsf{C}}_i \dashv {\mathsf{S}}_i$ between the categories ${\mathsf{PBR}}_i$ and ${\mathsf{AC}}_i$.
The component at an object $S$ of the unit  $\eta\colon 1_{{\mathsf{PBR}}_i} \to {\mathsf{S}}_i{\mathsf{C}}_i$ is the morphism 
$$\eta_S\colon S\to {\mathsf S}_i{\mathsf C}_i(S)$$
given by 
$$s\mapsto \Theta(s).
$$
The component at an object $C$ of the counit  $\varepsilon\colon {\mathsf{C}}_i{\mathsf{S}}_i \to 1_{{\mathsf{AC}}_i}$ is the cofunctor $$\varepsilon_C = (\mu, f, \rho) \colon {\mathsf C}_i{\mathsf S}_i(C)\rightsquigarrow C$$ given by 
$$
\mu([A,\varphi_x],x) = r(A1_x), \,\, f(x) = \varphi_x,  \,\, \rho_1([A,\varphi_x],x) = A1_x, \,\, \rho_0(x)=x,
$$
where $\varphi_x$ is as in Theorem \ref{th:Stone_classical}. Moreover, $\varepsilon_C$ is an isomoprhism for every object $C$ of the category ${\mathsf{AC}}_i$.

\item The adjunction ${\mathsf{C}}_i \dashv {\mathsf{S}}_i$ restricts (for each type of morphisms) to an adjunction  between the category of preBoolean range semigroups  and the category of strongly ample categories, and furthermore to an adjunction between the category of \'etale preBoolean range semigroups and the category of biample categories.
\end{enumerate}
\end{theorem}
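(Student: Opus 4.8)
The plan is to establish the adjunction through its unit $\eta$ and counit $\varepsilon$, both of which are given explicitly, by checking that each is a natural transformation valued in morphisms of the correct type $i$, that the two triangle identities hold, and that $\varepsilon_C$ is moreover an isomorphism. Since $\varepsilon$ will turn out to be a natural isomorphism, the substance of the adjunction is concentrated in the counit, and I expect the verification that $\varepsilon_C$ is an isomorphism to be the main point.

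First I would check that $\eta_S\colon s\mapsto \Theta(s)$ is a morphism. By Lemma~\ref{lem:theta_s} each $\Theta(s)$ is a compact slice, hence lies in ${\mathsf S}_i{\mathsf C}_i(S)=(C^S)^a$, which is a preBoolean restriction semigroup with local units by Corollary~\ref{cor:ample_cat} and Proposition~\ref{prop:Boolean}. A short computation with germs gives $\Theta(s)\Theta(t)=\Theta(st)$ (using $(st)^*\le t^*$) and $\Theta(s)^*=ud(\Theta(s))=\Theta(s^*)$, so $\eta_S$ is a $(2,1)$-morphism; its restriction to $P(S)$ is exactly the Stone-duality unit $\eta_{P(S)}$ of Theorem~\ref{th:Stone_classical} followed by $u$, hence an isomorphism onto $P((C^S)^a)$ and in particular a proper morphism of generalized Boolean algebras. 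Naturality of $\eta$ is immediate from the construction of ${\mathsf C}_i$ on morphisms in Subsection~\ref{subs:morphisms2}: for $g\colon S\to T$ one has $({\mathsf C}_i(g))_*(\Theta(s))=\Theta(g(s))$, which is exactly $\eta_T\circ g={\mathsf S}_i{\mathsf C}_i(g)\circ\eta_S$.

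Next I would analyse $\varepsilon_C=(\mu,f,\rho)$. Since $C$ is ample, $C_0$ is a locally compact Stone space (Proposition~\ref{prop:char_etale}) and, via $u$, $P(C^a)$ is the generalized Boolean algebra of compact-open subsets of $C_0$; hence by Theorem~\ref{th:Stone_classical} the anchor $f\colon x\mapsto\varphi_x$ is precisely the Stone-duality counit and is a homeomorphism $C_0\to\widehat{P(C^a)}$. It remains to see that $\rho_1\colon[A,\varphi_x]\mapsto A1_x$ is a homeomorphism; this is the heart of the argument. Because $A$ is a slice and $\varphi_x(A^*)=1$ means $x\in d(A)$, the set $A1_x$ is the unique arrow of $A$ over $x$, so $\rho_1$ is well defined; injectivity and surjectivity follow from ampleness, since any arrow sits in a compact slice, and two germs with the same image $a$ have $x=d(a)=y$ and share a compact slice $U\subseteq A\cap B$ below both, whence the germs agree. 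For the topology, $\rho_1$ carries the basic open $\Theta(A)$ bijectively onto $A$, and this restriction is a homeomorphism since it factors as $d|_{\Theta(A)}$, the Stone homeomorphism $x\mapsto\varphi_x$, and $(d|_A)^{-1}$; as the slices $A\in C^a$ form a basis of $C_1$, the map $\rho_1$ is continuous and open, hence a homeomorphism. Thus $\varepsilon_C$ is an isomorphism, and naturality of $\varepsilon$ in $C$ is a routine check against the composition rule for cofunctors.

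With $\eta$ and $\varepsilon$ in hand, the triangle identities remain. One is immediate: for $A\in C^a$ we have ${\mathsf S}_i(\varepsilon_C)(\eta_{C^a}(A))=(\varepsilon_C)_*(\Theta(A))=\{A1_x\}=A$, so ${\mathsf S}_i(\varepsilon_C)\circ\eta_{{\mathsf S}_i C}=\id$. The other, $\varepsilon_{{\mathsf C}_i S}\circ{\mathsf C}_i(\eta_S)=\id_{{\mathsf C}_i S}$, is verified by tracing germs through the recipe of Subsection~\ref{subs:morphisms2} and the formula for $\varepsilon$; it is notationally heavier but routine. Matching of types for each $i$ is guaranteed by Proposition~\ref{prop:morphisms1} and Subsection~\ref{subs:functors}, which show that ${\mathsf C}_i$ and ${\mathsf S}_i$ preserve type-$i$ morphisms and that $\eta_S$ and the isomorphism $\varepsilon_C$ are of type $i$. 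Finally, for part~(2) I would observe that the functors and both natural transformations restrict: if $S$ is an \'etale (resp.\ arbitrary) preBoolean range semigroup then ${\mathsf C}_i S$ is biample (resp.\ strongly ample) by Proposition~\ref{prop:lh1} (resp.\ Proposition~\ref{prop:open1}), while if $C$ is biample (resp.\ strongly ample) then $C^a$ is an \'etale (resp.\ arbitrary) preBoolean range semigroup by Proposition~\ref{prop:etale} (resp.\ Proposition~\ref{prop:range}); moreover $\eta_S$ preserves $^+$ since $\Theta(s)^+=ur(\Theta(s))=u(D_{s^+})=\Theta(s^+)$ by Proposition~\ref{prop:open1}, so the unit and counit remain morphisms of the appropriate range signature. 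The main obstacle, as noted, is establishing that $\rho_1$ is a homeomorphism, where ampleness and Stone duality must be combined.
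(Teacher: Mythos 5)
Your proposal is correct and follows essentially the same route as the paper: both establish the adjunction by exhibiting the unit $s\mapsto\Theta(s)$ and the counit $\varepsilon_C$ explicitly, reduce the projection-level claims to the classical Stone duality of Theorem~\ref{th:Stone_classical}, prove $\rho_1$ is a homeomorphism using that compact slices form a basis, verify the triangle identities, and obtain part~(2) from Propositions~\ref{prop:range}, \ref{prop:etale}, \ref{prop:open1} and~\ref{prop:lh1}. The only differences are cosmetic (your factorization of $\rho_1|_{\Theta(A)}$ through the Stone homeomorphism versus the paper's direct computation of $\rho_1^{-1}(B)$, and your explicit check that $\eta_S$ preserves $^+$, which the paper leaves implicit).
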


\begin{proof}
(1) We first show that the map $\eta_S$  is a $(2,1)$-morphism. Let $s,t\in S$. Since elements of $\Theta(s)$ and $\Theta(t)$ are of the form $[s,\varphi]$ and $[t, \psi]$, respectively, their products are of the form $[st, \psi]$, and we have the inclusion $\Theta(s)\Theta(t) \subseteq \Theta(st)$. Let $[st,\psi] \in \Theta(st)$. Since $(t\circ \psi)(s^*) = \psi((s^*t)^*) = \psi((st)^*) = 1$ and $1\geq \psi(t^*) \geq \psi ((st)^*)=1$, the germs $[s, t \circ \psi]$ and $[t,\psi]$ are well defined, so that $[st, \psi] = [s,t\circ \psi][t,\psi]\in \Theta(s)\Theta(t)$, and so we have shown the equality $\Theta(s)\Theta(t) = \Theta(st)$.

Furthermore, for $s\in S$, we have $$(\Theta(s))^* = 1_{d(\Theta(s))} = \{1_{\varphi}\colon \varphi(s^*)=1\} = \{[s^*,\varphi]\colon \varphi(s^*)=1\} = \Theta(s^*).$$

\noindent By Theorem \ref{th:Stone_classical} we have that $\eta_S|_{P(S)}$ is a morphism of generalized Boolean algebras.

Let us show that
if $s\smile t$ and $s\vee t$ exists then   $\Theta(s\vee t) = \Theta(s) \cup \Theta(t)$.
By Lemma \ref{lem:aux1}(1) we have $\Theta(s\vee t) \supseteq \Theta(s), \Theta(t)$, so that $\Theta(s\vee t) \supseteq \Theta(s) \cup \Theta(t)$. For the reverse inclusion it suffices to show that $d(\Theta(s\vee t)) \subseteq d(\Theta(s) \cup \Theta(t)) = D_{s^*}\cup D_{t^*}$. If $\varphi \in d(\Theta(s\vee t))$, we have $\varphi ((s\vee t)^*)=1$. By Lemma \ref{lem:joins} we obtain $\varphi(s^*)\vee \varphi(t^*) = 1$, so that either $\varphi\in D_{s^*}$ or $\varphi \in D_{t^*}$, as desired. Hence $\eta_S$ is a morphism of the category ${\mathsf{PBR}}_i$. The verification of naturality of $\eta_S$ is routine and we omit it.

We now prove that $\varepsilon_C$ is a cofunctor and is an isomorphism. We first check that 
\begin{equation}\label{eq:s28a}
A\circ \varphi_x = \varphi_{r(A1_x)},
\end{equation}
where $A$ is a compact slice of $C$. We have $\varphi_{r(A1_x)}(E) = 1$ if and only if $1_{r(A1_x)}\in 1_E$. On the other hand, $(A\circ \varphi_x)(1_{E}) = 1$ if and only if $\varphi_x((1_EA)^*) = 1$ which is equivalent to $1_x\in (1_EA)^*$. Since $A$ is a slice, this is equivalent to $A1_x \in A(1_EA)^* = 1_EA$, which holds if and only if $1_{r(A1_x)}\in 1_E$, as needed. 
To show that $(\mu,f)$ is an action, we verify that (A1), (A2) and (A3) hold. That axioms (A1) and (A2) hold quickly follows from \eqref{eq:s28a}. For (A3), we need to check that $1_{\varphi_x}\cdot x = x$. Recall that $1_{\varphi_x} = [1_U,\varphi_x]$ where $U$ is such that $\varphi_x(U)=1$. So we have $1_{\varphi_x}\cdot x = [1_U,\varphi_x]\cdot x = r(1_U1_x) = r(1_x)= x$ since $x\in U$. The verification that $\mu$ is continuous is left to the reader. 

It follows from \eqref{eq:s28a} that the action of  ${\mathsf C}_i{\mathsf S}_i(C) \rtimes C_0$ on $C_0$ given by $[A,\varphi_x]\cdot x = r(A1_x)$ is equivalent, via $f\colon x\mapsto \varphi_x$, to the natural action of ${\mathsf C}_i{\mathsf S}_i(C)$ on its space of identities given  by $[A,\varphi_x]\cdot \varphi_x = r([A,\varphi_x])=\varphi_{r(A1_x)}$.
We verify that  $\rho\colon {\mathsf C}_i{\mathsf S}_i(C) \rtimes C_0\to C$ is an isomorphism. That $\rho$ is functorial quickly follows from the fact that, in the category $ {\mathsf C}_i{\mathsf S}_i(C) \rtimes C_0$, we have $r([A,\varphi_x],x) = r(A1_x)$. Since $\rho_0\colon C_0\to C_0$ is the identity map, to show that $\rho$ is an isomorphism, we only show that $\rho_1$ is a homeomorphism. We prove that it is a bijection.
Suppose $\rho_1([A,\varphi_x],x) = \rho_1([B,\varphi_y],y)$, that is, $A1_x = B1_y$. Then $x=d(A1_x) = d(B1_y) = y$. So we have $A1_x = B1_x$. It follows that $A\cap B$ is a non-empty set and contains $A1_x$. Since compact slices form a basis of the topology on $C_1$, there is a compact slice $U$ which contains $1_x$ and is contained in both $A$ and $B$. It follows that $[A,\varphi_x] = [U,\varphi_x] = [B,\varphi_x]$, and we have proved that $\rho_1$ is injective. Let us show it is surjective. Let $u\in C$. Since compact slices form a basis of the topology, there is a compact slice, $U$, which contains $u$. But then $u = U1_{d(u)} = \rho_1([U, \varphi_{d(u)}], d(u))$, as needed. 
We now prove that $\rho_1$ is continuous.
It suffices to show that $\rho_1^{-1}(B)$ is open for a compact slice $B$ of $C_1$. It is clear that $\{([B, \varphi_{d(x)}],d(x))\colon x\in B\}\subseteq \rho_1^{-1}(B)$. Suppose that $[A, \varphi_{d(y)}] \in \rho_1^{-1}(B)$. Then $y\in B$, so $y\in A\cap B$ and there is a compact slice $U\subseteq A\cap B$ containing $y$. It follows that $[A,\varphi_{d(y)}] = [B,\varphi_{d(y)}]$ and shows that $\rho_1^{-1}(B) = \{([B, \varphi_{d(x)}],d(x))\colon x\in B\}$ which is homeomorphic to the compact slice $\Theta(B)$. Since the image under $\rho_1$ of the set $\{([B, \varphi_{d(x)}],d(x))\colon x\in B\}$ is $B$, the map $\rho_1$ is open. We have thus proved that $\rho$ it is an isomorphism, as desired.  The verification of naturality of $\varepsilon_C$ is routine and we omit it. 

We verify that the composite natural transformation
$$
{\mathsf{S}}_i \stackrel{\eta {\mathsf{S}}_i}{\xrightarrow{\hspace{1.2cm}}} {\mathsf{S}}_i{\mathsf{C}}_i{\mathsf{S}}_i \stackrel{{\mathsf{S}}_i\varepsilon}{\xrightarrow{\hspace{1.2cm}}} {\mathsf{S}}_i$$
is the identitity.  Let $C$ be an object of ${\mathsf{AC}}_i$. The morphism $\eta_{{\mathsf{S}}_i(C)}$ takes $s\in {\mathsf{S}}_i(C)$ to $\Theta(s) \in  {\mathsf{S}}_i{\mathsf{C}}_i{\mathsf{S}}_i(C)$ and the morphism ${\mathsf{S}}_i(\varepsilon_C)$ takes $\Theta(s)$ back to $s$, as needed.
We verify that the composite natural transformation
$${\mathsf{C}}_i \stackrel{{\mathsf{C}}_i\eta}{\xrightarrow{\hspace{1.2cm}}} {\mathsf{C}}_i{\mathsf{S}}_i{\mathsf{C}}_i \stackrel{\varepsilon{\mathsf{C}}_i}{\xrightarrow{\hspace{1.2cm}}} {\mathsf{C}}_i
$$
is the identity. Let $S$ be an object of ${\mathsf{PBR}}_i$. We denote ${\mathsf{C}}_i(\eta_S)$ by $F = (\bar{\mu},\bar{f},\bar{\rho})$ and $\varepsilon_{{\mathsf{C}}_i(S)}$ by $G = (\tilde{\mu},\tilde{f},\tilde{\rho})$. We have $\bar{f}(\varphi_{\psi}) = \psi$, $\bar{\mu}([s,\psi], \varphi_{\psi}) = r([\Theta(s), \varphi_{\psi}])$ and $\bar{\rho}_1([s,\psi], \varphi_{\psi}) = [\Theta(s), \varphi_{\psi}]$. Furthermore,  $\tilde{f}(\psi) = \varphi_{\psi}$, $\tilde{\mu}([\Theta(s), \varphi_{\psi}], \psi) = r([s,\psi])$ and $\tilde{\rho}_1([\Theta(s), \varphi_{\psi}], \psi) = [s,\psi]$. It follows that the composite cofunctor 
$G\circ F = (\mu, f, \rho)$ satisfies $f(\psi)=\psi$, $\mu([s,\psi],\psi) = r([s,\psi])$ 
and $\rho\colon {\mathsf C}_i(S)\to {\mathsf C}_i(S)$ is the identity functor. Hence $G\circ F$ is the identity on ${\mathsf C}_i(S)$. Applying \cite[Chapter~IV, Theorem~2(v)]{M71} we conclude that $ {\mathsf{C}}_i \dashv  {\mathsf{S}}_i$ is an adjunction with unit $\eta$ and counit $\varepsilon$.

(2) This part follows from Propositions \ref{prop:range} and \ref{prop:etale}, and from Propositions \ref{prop:open1} and \ref{prop:lh1}, respectively. 
\end{proof}

\begin{proposition}\label{prop:epsilon} Let $\eta$ be the unit of the adjunction ${\mathsf C}_i \dashv {\mathsf S}_i$ of Theorem \ref{th:adjunction1}.
Then for any object $S$ of the category ${\mathsf{PBR}}_i$ the morphism $\eta_S$ is injective. Furthermore, $\eta_S$ is an isomorphism if and only if $S$ is Boolean.
\end{proposition}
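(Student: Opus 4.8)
The plan is to prove injectivity directly, and then to show that surjectivity is equivalent to being Boolean; together with the observation that a bijective morphism here is automatically a categorical isomorphism, this gives the statement.

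For injectivity, I would start from $\Theta(s)=\Theta(t)$. Applying $d$ and using Lemma~\ref{lem:aux1}(2) gives $D_{s^*}=D_{t^*}$, and since prime characters separate the elements of the generalized Boolean algebra $P(S)$ (Theorem~\ref{th:Stone_classical}), this forces $s^*=t^*$. Next, for each $\varphi\in D_{s^*}$ the germ $[s,\varphi]$ lies in $\Theta(t)$ and has domain $\varphi$; as $[t,\varphi]$ is the unique germ of $\Theta(t)$ with domain $\varphi$, we get $[s,\varphi]=[t,\varphi]$, which by the definition of $\sim$ produces $e\in P(S)$ with $\varphi(e)=1$ and $se=te$. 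The key observation is that $E=\{e\in P(S)\colon se=te\}$ is a down-set closed under the finite joins existing in $P(S)$, closure under joins following from Lemma~\ref{lem:left_distr}. The previous step shows that $\{D_e\colon e\in E\}$ covers the compact-open set $D_{s^*}$; extracting a finite subcover and forming the corresponding join $e\in E$ gives $D_{s^*}\subseteq D_e$, hence $s^*\leq e$. Then $s=ss^*=se=te=tt^*=t$, so $\eta_S$ is injective.

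For the second statement, I would first note that a bijective $\eta_S$ is automatically an isomorphism in ${\mathsf{PBR}}_i$: its inverse is a $(2,1)$-morphism whose restriction to projections is the inverse of an isomorphism of generalized Boolean algebras, and such an inverse is routinely checked to be proper and weakly meet-preserving. So it suffices to prove that $\eta_S$ is surjective if and only if $S$ is Boolean. Assume $S$ is Boolean and let $A\in C^a$. Since the sets $\Theta(s)$ are compact slices forming a basis of the topology, compactness yields $A=\Theta(s_1)\cup\dots\cup\Theta(s_n)$ with each $\Theta(s_i)\subseteq A$, i.e.\ $\Theta(s_i)\leq A$ in $C^a$; by Lemma~\ref{lem:3} these slices are pairwise compatible, and pulling compatibility (an identity in $\cdot$ and $^*$) back through the injective $(2,1)$-morphism $\eta_S$ gives $s_i\smile s_j$ in $S$. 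Granting that a finite pairwise-compatible family has a join in $S$, the join $s=s_1\vee\dots\vee s_n$ exists and $\eta_S(s)=\Theta(s_1)\cup\dots\cup\Theta(s_n)=A$, so $\eta_S$ is onto.

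The step I expect to be the real obstacle is precisely the existence of finite pairwise-compatible joins in a Boolean restriction semigroup, since (BR1) only guarantees \emph{binary} compatible joins. I would reduce this to showing compatibility is stable under binary joins: if $s\smile u$, $t\smile u$ and $s\vee t$ exists, then $(s\vee t)\smile u$. This should follow by computing $(s\vee t)u^*=su^*\vee tu^*$ via (BR3) and $u(s\vee t)^*=us^*\vee ut^*$ via Lemma~\ref{lem:joins}(2) together with Lemma~\ref{lem:left_distr}, then using $su^*=us^*$ and $tu^*=ut^*$; an induction then builds the full finite join. Finally, for the converse, assuming $\eta_S$ bijective and $s\smile t$ in $S$, the join $\Theta(s)\vee\Theta(t)$ exists in the Boolean semigroup $C^a$ (Proposition~\ref{prop:Boolean}) and equals $\Theta(v)$ for some $v$ by surjectivity; since $\eta_S$ preserves $\cdot$ and $^*$ and is injective it reflects the order (because $\Theta(a)\leq\Theta(b)$ rewrites as $\Theta(a)=\Theta(ba^*)$, forcing $a=ba^*$, i.e.\ $a\leq b$), so the universal property of $\Theta(v)$ transports back to give $v=s\vee t$ in $S$. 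Hence every compatible pair has a join, (BR1) holds, and $S$ is Boolean.
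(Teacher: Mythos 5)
Your proof is correct and follows essentially the same route as the paper's: Stone duality gives $s^*=t^*$, compactness of $D_{s^*}$ plus Lemma~\ref{lem:left_distr} glues the local agreements $se=te$ into $s=t$, and surjectivity for Boolean $S$ comes from writing a compact slice as a finite union of basic slices $\Theta(s_i)$ whose pairwise compatibility pulls back through the injective $(2,1)$-morphism. You additionally supply two details the paper leaves implicit — that a finite pairwise-compatible family in a Boolean restriction semigroup has a join (via stability of $\smile$ under binary joins, which is exactly the right argument) and that a bijective morphism of ${\mathsf{PBR}}_i$ is an isomorphism — both of which are worth making explicit.
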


\begin{proof}
To prove that $\eta_S$ is injective, we assume that $s,t\in S$ are such that $\Theta(s)=\Theta(t)$ and need to show that $s=t$. We have $d(\Theta(s))= d(\Theta(t))$, so that $D_{s^*}= D_{t^*}$ and thus $s^*=t^*$, by Theorem \ref{th:Stone_classical}. Let $\varphi \in D_{s^*}$. Then $[s,\varphi]\in \Theta(s)=\Theta(t)$, so there is $u_{\varphi}\leq s,t$ with $\varphi(u_{\varphi}^*)=1$. We have $D_{s^*} = \cup_{\varphi\in D_{s^*}} D_{u_{\varphi}^*}$. By compactness, there is a finite set $I\subseteq D_{s^*}$ such that $D_{s^*} = \cup_{\varphi\in I} D_{u_{\varphi}^*}$. Hence $s^* = \vee_{\varphi\in I} u_{\varphi}^*$, again by Theorem \ref{th:Stone_classical}. Applying Lemma~\ref{lem:left_distr} and $u_{\varphi}\leq s$ we obtain $s= ss^* = s(\vee_{\varphi\in I} u_{\varphi}^*) = \vee_{\varphi\in I} su_{\varphi}^* = \vee_{\varphi\in I}u_{\varphi}$.
Similarly, we have $t = \vee_{\varphi\in I}u_{\varphi}$. It follows that $s=t$, as desired. 

If $\eta_S$ is an isomorphism then $\eta_S(S)$ is isomorphic to ${\mathsf S}_i{\mathsf C}_i(S)$ which is Boolean. Conversely, suppose $\eta_S(S)$ is Boolean. We show that $\eta_S$ is surjective.
Let $A$ be a compact slice of ${\mathsf C}_i(S)$. Since the elements $\Theta(s)$, where $s$ runs through $S$, form a basis of the topology, $A = \cup_{i\in I} \Theta(s_i)$ for some finite set of indices $I$ and some $s_i\in S$, $i\in I$. Since the slices $\Theta(s_i)$ have an upper bound, $A$, they are  compatible. Then $\Theta(s_i)\Theta(s_j^*) = \Theta(s_j)\Theta(s_i^*)$ for all $i,j$, so that $\Theta(s_is_j^*) = \Theta(s_js_i^*)$, and injectivity of $\Theta$ implies that $s_is_j^* = s_js_i^*$, so that $s_i \smile s_j$ for all $i,j\in I$. Since $S$ is Boolean, the join $\vee_{i\in I}s_i$ exists in $S$. It now follows  that
$A=\cup_{i\in I}\Theta(s_i) = \Theta(\vee_{i\in I}s_i)$, so that $s\mapsto \Theta(s)$ is surjective. So we have proved that $\eta_S$ is an isomorphism if and only if $S$ is Boolean. 
\end{proof}

Since the counit of the adjunction ${\mathsf C}_i \dashv {\mathsf S}_i$ of Theorem \ref{th:adjunction1} is a natural isomorphism, the right adjoint functor ${\mathsf S}_i$ is full and faithful, so the adjunction is in fact a reflection (see \cite[Exercise 2.2.12]{Le14}). It follows from \cite[Corollary 4.2.4]{Bo94}) that the adjunction is monadic and the category of Eilenberg-Moore algebras of the induced monad is equivalent to the category of Boolean restriction semigroups with local units and morphisms of type $i$ which is a reflective subcategory of ${\mathsf{PBR}}_i$.

The fixed-point equivalences of the adjunctions of Theorem \ref{th:adjunction1}
are the following equivalences of categories.

\begin{theorem} (Equivalence theorem 1) \label{th:eq1} 
\begin{enumerate}
    \item The category of Boolean restriction semigroups with local units and morphisms of type $i$ is equivalent to the category of ample categories and morphisms of type $i$.
    \item The category of Boolean range semigroups and morphisms of type $i$ is equivalent to the category of strongly ample categories and morphisms of type $i$.
    \item The category of \'etale Boolean range semigroups and morphisms of type $i$ is equivalent to the category of biample categories and morphisms of type $i$.
\end{enumerate}
\end{theorem}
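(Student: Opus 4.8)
The plan is to obtain all three equivalences formally, as the fixed-point equivalences of the adjunctions established in Theorem \ref{th:adjunction1}, exploiting that each of those adjunctions has an invertible counit. Recall the standard fact that for any adjunction $F\dashv G$ with unit $\eta$ and counit $\varepsilon$, the functors $F$ and $G$ restrict to mutually quasi-inverse equivalences between the full subcategory $\mathrm{Fix}(\eta)$ of objects at which $\eta$ is an isomorphism and the full subcategory $\mathrm{Fix}(\varepsilon)$ of objects at which $\varepsilon$ is an isomorphism. The triangle identities $\varepsilon_{FX}\circ F\eta_X=\mathrm{id}_{FX}$ and $G\varepsilon_Y\circ \eta_{GY}=\mathrm{id}_{GY}$ guarantee the two inclusions $F(\mathrm{Fix}(\eta))\subseteq \mathrm{Fix}(\varepsilon)$ and $G(\mathrm{Fix}(\varepsilon))\subseteq \mathrm{Fix}(\eta)$: if $\eta_X$ is an isomorphism then so is $F\eta_X$, whence $\varepsilon_{FX}=(F\eta_X)^{-1}$ is an isomorphism, and dually. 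On these subcategories $\eta$ and $\varepsilon$ become natural isomorphisms, producing the equivalence. (This is precisely the reflection situation already noted in the text preceding the statement, where $\mathsf{S}_i$ is observed to be fully faithful.)

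For part (1) I would apply this to $\mathsf{C}_i\dashv \mathsf{S}_i$. By Theorem \ref{th:adjunction1}(1) the counit $\varepsilon_C$ is an isomorphism for every ample category $C$, so $\mathrm{Fix}(\varepsilon)=\mathsf{AC}_i$. By Proposition \ref{prop:epsilon} the unit $\eta_S$ is an isomorphism exactly when $S$ is Boolean, so $\mathrm{Fix}(\eta)$ is the full subcategory of $\mathsf{PBR}_i$ on the Boolean restriction semigroups with local units. The fixed-point equivalence then yields the asserted equivalence, for each type $i=1,2,3,4$ simultaneously, since the entire argument is carried out inside the fixed morphism-type structure.

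For parts (2) and (3) I would run the identical argument on the two restricted adjunctions supplied by Theorem \ref{th:adjunction1}(2): between preBoolean range semigroups and strongly ample categories, and between \'etale preBoolean range semigroups and biample categories. Their units and counits are the restrictions of $\eta$ and $\varepsilon$, so the counit remains an isomorphism everywhere and Proposition \ref{prop:epsilon} still detects Booleanness through the unit. The only additional step is to match the algebraic-side fixed points with the objects named in the statement: a preBoolean range semigroup that is Boolean is by definition a Boolean range semigroup, and an \'etale preBoolean range semigroup that is Boolean is an \'etale Boolean range semigroup, the adjective \emph{\'etale} meaning on both sides that every element is a finite join of bideterministic elements (cf.\ Definition \ref{def:etale}).

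Essentially all the difficulty has already been absorbed into Theorem \ref{th:adjunction1} (establishing the adjunctions and the invertibility of $\varepsilon$) and Proposition \ref{prop:epsilon} (characterising invertibility of $\eta$), so what remains is formal. The one point requiring genuine care is that passing to the restricted adjunctions of Theorem \ref{th:adjunction1}(2) does not disturb which objects carry an invertible unit or counit; this is immediate because invertibility of $\eta_S$ and of $\varepsilon_C$ is detected by the same morphisms in the smaller and in the ambient category, so the fixed-point subcategories of a restricted adjunction are exactly the intersections of $\mathrm{Fix}(\eta)$, $\mathrm{Fix}(\varepsilon)$ with the restricted object classes. I expect this bookkeeping, rather than any new construction, to be the main (and only) obstacle.
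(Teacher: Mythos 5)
Your proposal is correct and is essentially the paper's own argument: the paper derives Theorem \ref{th:eq1} in one line as the fixed-point equivalences of the adjunctions of Theorem \ref{th:adjunction1}, using exactly the facts you cite (the counit is an isomorphism at every ample category, and by Proposition \ref{prop:epsilon} the unit is an isomorphism precisely at the Boolean objects), with parts (2) and (3) following from the restricted adjunctions of Theorem \ref{th:adjunction1}(2). Your write-up merely makes explicit the standard fixed-point bookkeeping that the paper leaves implicit.
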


\begin{remark} Note that morphisms $S\to T$ on the algebraic side of our equivalences are presented by cofunctors ${\mathsf{C}}_i(S)\rightsquigarrow {\mathsf{C}}_i(T)$, whose anchor map points from ${\mathsf{C}}_i(T)_0$  to ${\mathsf{C}}_i(S)_0$. So our covariant equivalences do extend the contravariant Stone duality of Theorem \ref{th:Stone_classical}.  Furthermore, in the case of morphisms of type $4$ these cofunctors can be equivalently described as certain functors ${\mathsf{C}}_i(T) \to {\mathsf{C}}_i(S)$, and we obtain genuine dualities, see Subsection~\ref{subs:functors}.
\end{remark}

The following result is a consequence of Proposition \ref{prop:range} and Proposition \ref{prop:open1}. 

\begin{corollary}
Let $S$ be a Boolean restriction semigroup with local units.
Then $S$ admits a structure of a range semigroup if and only if its category of germs is strongly ample. 
\end{corollary}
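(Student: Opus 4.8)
The plan is to derive the statement directly from the isomorphism $\eta_S\colon S \to C^a$ between $S$ and the Boolean restriction semigroup of compact slices of its category of germs $C$, which is available precisely because $S$ is Boolean (Proposition \ref{prop:epsilon}), combined with the two cited propositions that tie openness of $r$ to the existence of a range structure. Throughout I write $C = {\mathsf{C}}_i(S)$ for the category of germs, so that $C^a = {\mathsf{S}}_i{\mathsf{C}}_i(S)$ and $\eta_S$ sends $s\mapsto \Theta(s)$.

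For the forward implication I would argue as follows. If $S$ carries a compatible cosupport operation $^+$, then $(S,\cdot,^*,^+)$ is by definition a range semigroup whose restriction reduct is the given Boolean restriction semigroup; hence $S$ is a Boolean range semigroup. Since a Boolean restriction semigroup is in particular preBoolean — condition (BR1) implies (BR1') because any pair with a common upper bound is compatible by Lemma \ref{lem:3} — the hypotheses of Proposition \ref{prop:open1} are satisfied. That proposition yields that the range map $r$ of $C$ is open, and as $C$ is already ample by Corollary \ref{cor:ample_cat}, it is strongly ample by definition.

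For the converse I would start from the assumption that $C$ is strongly ample and invoke Proposition \ref{prop:range}(2), which gives that $C^a$ is a Boolean range semigroup; in particular $C^a$ carries a cosupport operation $^+$ making $(C^a,\cdot,^*,^+)$ a range semigroup. The crux is then to transport this structure back to $S$. Because $S$ is Boolean, $\eta_S\colon S\to C^a$ is an isomorphism of $(2,1)$-algebras (Proposition \ref{prop:epsilon}), so setting $s^+ := \eta_S^{-1}\bigl((\eta_S(s))^+\bigr)$ endows $S$ with a compatible cosupport operation and makes $(S,\cdot,^*,^+)$ a range semigroup isomorphic to $C^a$. Hence $S$ admits a range semigroup structure.

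Since each direction reduces to a single application of one of the two propositions together with the established isomorphism $\eta_S$, I do not expect a substantive obstacle. The only points needing care are the two bookkeeping observations that Boolean implies preBoolean (so that Proposition \ref{prop:open1} is applicable in the forward direction) and that $\eta_S$ preserves and reflects exactly the $(2,1)$-signature, so that transport of the unary operation $^+$ along it is legitimate; the uniqueness of a compatible $^+$ established earlier guarantees that the resulting range structure on $S$ is unambiguous.
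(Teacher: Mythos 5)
Your proposal is correct and follows essentially the same route as the paper, which derives the corollary directly from Proposition \ref{prop:range} and Proposition \ref{prop:open1}; you have simply made explicit the two bookkeeping steps the paper leaves implicit (that Boolean implies preBoolean via Lemma \ref{lem:3}, and that the cosupport operation on $C^a$ transports back to $S$ along the isomorphism $\eta_S$ of Proposition \ref{prop:epsilon}). No gaps.
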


Let ${\mathsf{BBR}}_{i}$ be the category of Boolean birestriction semigroups and morphisms of type $i$ and ${\mathsf{BAC}}_{i}$ the category of biample categories  and morphisms of type $i$ whose action is injective. The functor ${\mathsf{C}}_i$ from the category of \'etale preBoolean range semigroups and morphisms of type $i$ which preserve bideterministic elements to the category of biample categories with morphisms of type $i$ whose action is injective restricts to the category ${\mathsf{BBR}}_{i}$, and we denote this restriction by the same symbol ${\mathsf{C}}_i$.
Furthermore, let ${\mathsf{BD}}$ be the functor from the category of \'etale Boolean range semigroups and morphisms of type $i$ which preserve bideterministic elements to the category ${\mathsf{BBR}}_{i}$  which maps $S$ to ${\mathscr{BD}}(S)$ and $f\colon S\to T$ to the restriction of $f$ to ${\mathscr{BD}}(S)$.

\begin{theorem} (Equivalence theorem 2)\label{th:eq2} For each $i=1,2,3,4$
the functors 
$$
{\mathsf{C}}_i\colon {\mathsf{BBR}}_i \mathrel{\mathop{\rightleftarrows}} {\mathsf{BAC}}_{i} \colon {\mathsf{BD}}\circ {\mathsf{S}}_i.
$$
establish an equivalence of categories.
\end{theorem}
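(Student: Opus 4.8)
The plan is to avoid building the equivalence from scratch and instead harvest it from the adjunction ${\mathsf C}_i\dashv{\mathsf S}_i$ of Theorem \ref{th:adjunction1}, restricted (by its part (2)) to \'etale preBoolean range semigroups and biample categories. The first task is to check that the two functors land where claimed. A Boolean birestriction semigroup $S$ is a preBoolean restriction semigroup with local units all of whose elements are bideterministic, hence an \'etale preBoolean range semigroup with ${\mathscr{BD}}(S)=S$; so ${\mathsf C}_i(S)$ is biample, and by Remark \ref{rem:r1} the attached cofunctor has injective action, giving ${\mathsf C}_i\colon{\mathsf{BBR}}_i\to{\mathsf{BAC}}_i$. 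In the other direction, for biample $C$ the semigroup ${\mathsf S}_i(C)=C^a$ is an \'etale Boolean range semigroup (Propositions \ref{prop:range}, \ref{prop:etale}) with ${\mathscr{BD}}(C^a)=\widetilde{C}^a$ a Boolean birestriction semigroup (Proposition \ref{prop:det}(2)); on a cofunctor with injective action $F_*$ preserves bideterministic elements (Proposition \ref{prop:morphisms1}(3)), so its restriction to ${\mathscr{BD}}$ is a morphism of type $i$. Preservation of morphism types is inherited from Theorem \ref{th:adjunction1}.

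For the composite ${\mathsf C}_i\circ{\mathsf{BD}}{\mathsf S}_i$ I would use that, by Proposition \ref{prop:lh1}, the category of germs of an \'etale preBoolean range semigroup is literally the category of germs of its bideterministic part. Applied to $C^a$ this gives ${\mathsf C}_i({\mathsf{BD}}{\mathsf S}_i(C))={\mathsf C}_i(\widetilde{C}^a)={\mathsf C}_i({\mathsf S}_i(C))$, so the counit $\varepsilon_C$ of Theorem \ref{th:adjunction1}, which is always an isomorphism, furnishes a natural isomorphism ${\mathsf C}_i{\mathsf{BD}}{\mathsf S}_i\cong\mathrm{Id}_{{\mathsf{BAC}}_i}$, naturality being that of $\varepsilon$ under this identification.

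For the composite ${\mathsf{BD}}{\mathsf S}_i\circ{\mathsf C}_i$ the candidate isomorphism is the unit $\eta_S\colon s\mapsto\Theta(s)$. Since every $s$ is bideterministic, $\Theta(s)$ is a compact bislice (Lemma \ref{lem:theta_s} together with Lemma \ref{lem:8j1}), so $\eta_S$ indeed corestricts to $\widetilde{{\mathsf C}_i(S)}^a={\mathscr{BD}}({\mathsf S}_i{\mathsf C}_i(S))$. It is a $(2,1,1)$-morphism: it preserves $\cdot$ and $^*$ by Theorem \ref{th:adjunction1}, and it preserves $^+$ because $\eta_S(s^+)=\Theta(s^+)=u(D_{s^+})=ur(\Theta(s))=\Theta(s)^+$, where $r(\Theta(s))=D_{s^+}$ comes from Proposition \ref{prop:open1}. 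By Proposition \ref{prop:epsilon}, $\eta_S$ is injective, so everything reduces to surjectivity onto ${\mathscr{BD}}({\mathsf S}_i{\mathsf C}_i(S))$.

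The surjectivity is the one genuinely new point, and it is where the biunary hypothesis is indispensable. Given a compact bislice $A$ of ${\mathsf C}_i(S)$, the basis property lets me write $A=\bigcup_{i\in I}\Theta(s_i)$ with $I$ finite, and \'etaleness lets me take each $s_i$ bideterministic. In Proposition \ref{prop:epsilon} plain compatibility of the $\Theta(s_i)$ sufficed because a Boolean restriction semigroup has all compatible joins; here $S$ is only \emph{preBoolean} as a restriction semigroup, so I must upgrade to a \emph{bicompatible} family in order to invoke (BBR1). This is exactly where the fact that $A$ is a bislice, not merely a slice, enters: in the birestriction semigroup $\widetilde{{\mathsf C}_i(S)}^a$ the orders $\leq$ and $\leq'$ coincide, so $\Theta(s_i),\Theta(s_j)\leq A$ forces $\Theta(s_i)\asymp\Theta(s_j)$ by Lemma \ref{lem:3} and its dual; since $\eta_S$ is an injective $(2,1,1)$-morphism it reflects $\asymp$, whence $s_i\asymp s_j$ in $S$. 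Then (BBR1) produces $s=\bigvee_{i\in I}s_i$, and $\eta_S$ preserving finite joins gives $\eta_S(s)=\bigcup_{i\in I}\Theta(s_i)=A$. Thus each $\eta_S$ is a bijective $(2,1,1)$-morphism, hence an isomorphism of Boolean birestriction semigroups, and naturality is inherited from $\eta$; together with the counit isomorphism this yields the asserted equivalence.
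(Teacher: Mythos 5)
Your proposal is correct and follows essentially the same route as the paper: it harvests the equivalence from the adjunction of Theorem \ref{th:adjunction1}, handles the counit side via Proposition \ref{prop:lh1} (the germ category of $C^a$ equals that of $\widetilde{C}^a$), and proves surjectivity of $\eta_S$ onto ${\mathscr{BD}}({\mathsf S}_i{\mathsf C}_i(S))$ by covering a compact bislice with finitely many $\Theta(s_i)$ and taking their bicompatible join. The only difference is that you spell out details the paper leaves implicit --- that $\eta_S$ preserves $^+$ and that bicompatibility of the $\Theta(s_i)$ (forced by the common bislice upper bound) reflects along the injective $(2,1,1)$-morphism back to $S$ --- which is a welcome elaboration rather than a deviation.
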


\begin{proof}
By Theorem \ref{th:adjunction1}  $\eta_S \colon S\to {\mathsf{S}}_i{\mathsf{C}}_i(S)$ is an injective morphism and Lemma \ref{lem:8j1} ensures that $\eta_S(S)$ is contained in ${\mathscr{BD}}({\mathsf{S}}_i{\mathsf{C}}_i(S))$.
If $A\in{\mathscr{BD}}({\mathsf{S}}_i{\mathsf{C}}_i(S))$, then $A$ is a compact bislice, but we know that compact bislices $\Theta(s)$, where $s$ runs through $S$, form a basis of the topology. It follows that $A = \Theta(s_1) \cup \cdots \cup \Theta(s_n)$ for some $s_1,\dots. s_n\in S$, and because $\eta_S(S)$ is closed with respect to finite bicompatible joins, we conclude that $A \in \eta_S(S)$, so that $\eta_S$ is an isomorphism. Since by Proposition \ref{prop:lh1} the category of germs of ${\mathsf S}_i(C)$ coincides with the category of germs of ${\mathscr{BD}}({\mathsf S}_i(C))$, it follows that $\varepsilon_C$ of Proposition \ref{prop:epsilon} is an isomorphism between $C$ and the category of germs of $({\mathsf{BD}}\circ {\mathsf{S}}_i)(C)$. The result follows.
\end{proof}

\begin{remark} That the categories ${\mathsf{BBR}}_i$ and ${\mathsf{BAC}}_{i}$ are equivalent was proved in \cite[Theorem 8.19]{KL17}. Theorem \ref{th:eq1} provides a new proof of the latter equivalence, via the adjunction of Theorem~\ref{th:adjunction1}.
\end{remark}

Combining Theorem \ref{th:eq1} and Theorem \ref{th:eq2}, we obtain the following.

\begin{theorem} (Equivalence Theorem 3) \label{th:eq3}
For each $i=1,2,3,4$ the category of \'etale Boolean range semigroups and morphisms of type $i$ which preserve bideterministic elements is equivalent to the category ${\mathsf{BBR}}_i$ of Boolean birestriction semigroups. 
\end{theorem}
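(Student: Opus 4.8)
The plan is to derive the stated equivalence by composing two equivalences already at our disposal: the biample case of the fixed-point equivalence (Theorem \ref{th:eq1}(3)) and the equivalence of Theorem \ref{th:eq2}, glued along the common category ${\mathsf{BAC}}_i$ of biample categories with type-$i$ cofunctors whose action is injective.

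First I would cut Theorem \ref{th:eq1}(3) down to the relevant morphism classes. That result gives an equivalence between the category of \'etale Boolean range semigroups with \emph{all} type-$i$ morphisms and the category of biample categories with \emph{all} type-$i$ cofunctors, implemented by ${\mathsf{C}}_i$ and ${\mathsf{S}}_i$ with unit $\eta$ and counit $\varepsilon$ as in Theorem \ref{th:adjunction1}. I claim this restricts to an equivalence between the category of \'etale Boolean range semigroups with those type-$i$ morphisms that preserve bideterministic elements and the category ${\mathsf{BAC}}_i$. The two-way matching of morphisms is exactly what is needed: by Lemma \ref{lem:9ja} a bideterministic-preserving morphism $f\colon S\to T$ yields the cofunctor $(\mu,\bar f,\rho)$ of Subsection \ref{subs:morphisms2} whose action is injective, and conversely Proposition \ref{prop:morphisms1}(3) shows that a cofunctor with injective action induces, via $F_*$, a morphism that preserves bideterministic elements. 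Since $\eta$ and $\varepsilon$ are untouched, these observations exhibit the desired restricted equivalence.

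Next I would invoke Theorem \ref{th:eq2}, which already supplies the equivalence ${\mathsf{C}}_i\colon {\mathsf{BBR}}_i\rightleftarrows {\mathsf{BAC}}_i\colon {\mathsf{BD}}\circ {\mathsf{S}}_i$, and compose it with the restricted equivalence of the previous paragraph through ${\mathsf{BAC}}_i$. The composite carrying an \'etale Boolean range semigroup $S$ to ${\mathsf{BBR}}_i$ is $({\mathsf{BD}}\circ {\mathsf{S}}_i)\circ {\mathsf{C}}_i$; since $\eta_S$ is an isomorphism for Boolean $S$ by Proposition \ref{prop:epsilon}, this is naturally isomorphic to ${\mathsf{BD}}$, i.e.\ to $S\mapsto {\mathscr{BD}}(S)$. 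In the reverse direction a Boolean birestriction semigroup $T$ is sent to ${\mathsf{S}}_i{\mathsf{C}}_i(T)$, the \'etale Boolean range semigroup of compact slices of its category of germs. The two round-trips are natural isomorphisms: one uses Theorem \ref{th:eq2} for $T\mapsto {\mathscr{BD}}({\mathsf{S}}_i{\mathsf{C}}_i(T))\cong T$, and Proposition \ref{prop:lh1} (the category of germs of $S$ coincides with that of ${\mathscr{BD}}(S)$) together with Proposition \ref{prop:epsilon} for $S\mapsto {\mathsf{S}}_i{\mathsf{C}}_i({\mathscr{BD}}(S))={\mathsf{S}}_i{\mathsf{C}}_i(S)\cong S$.

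The object-level content is essentially immediate from Propositions \ref{prop:etale}, \ref{prop:det} and \ref{prop:lh1}, so I expect the main obstacle to be the morphism bookkeeping in the first step: one must confirm that the assignment between bideterministic-preserving type-$i$ morphisms and injective-action type-$i$ cofunctors is bijective on hom-sets and functorial, so that the restriction of Theorem \ref{th:eq1}(3) lands in exactly ${\mathsf{BAC}}_i$ — neither a proper subcategory nor a larger one — and is compatible with the unit and counit already in place.
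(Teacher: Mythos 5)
Your proposal is correct and follows essentially the same route as the paper, which obtains Theorem \ref{th:eq3} by composing the equivalence of Theorem \ref{th:eq1}(3) with that of Theorem \ref{th:eq2} along the common category ${\mathsf{BAC}}_i$. Your additional bookkeeping — matching bideterministic-preserving morphisms with injective-action cofunctors via Lemma \ref{lem:9ja} and Proposition \ref{prop:morphisms1}(3) — is exactly the detail the paper leaves implicit (it is set up in the paragraph defining the restricted functors before Theorem \ref{th:eq2}), so nothing is missing.
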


\subsection{The case of meet-semigroups} We say a restriction semigroup $S$ has {\em binary meets} if for every $s,t\in S$ their meet $s\wedge t$ exists in $S$, that is, if $S$ is a lower semilattice with respect to the natural partial order $\leq$\footnote{Meets in restriction categories were introduced and studied in \cite{CGH12a}.}. The following is inspired by
\cite[Proposition 3.7]{St10} and \cite[Corollary 8.3]{K12}.

\begin{proposition} 
Let $S$ be a Boolean restriction semigroup with local units. Then $S$ has binary meets if and only if the space of arrows of its category of germs is Hausdorff.
\end{proposition}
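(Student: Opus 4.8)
The plan is to transport the question, via the isomorphism $\eta_S$, into a purely topological condition on the arrow space $C_1$ of the category of germs $C=(C_1,C_0,d,r,u,m)$, and then to recognise that condition as Hausdorffness. First I would note that since $S$ is Boolean, Proposition \ref{prop:epsilon} gives that $\eta_S\colon S\to C^a$, $s\mapsto\Theta(s)$, is an isomorphism of Boolean restriction semigroups, hence an order isomorphism for the natural partial orders. On $C^a$ the natural order is just inclusion of slices: indeed $A\le B$ means $A=B1_{d(A)}$, and for slices this holds exactly when $A\subseteq B$, because injectivity of $d$ on the slice $B$ forces $B1_{d(A)}=\{b\in B\colon d(b)\in d(A)\}=A$. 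Consequently $S$ has binary meets if and only if for any two compact slices $A,B$ the greatest compact slice contained in $A\cap B$ exists. Since compact slices form a basis of the topology on $C_1$ (Corollary \ref{cor:ample_cat}), the open set $A\cap B$ is a union of compact slices, so its family of compact subslices has a greatest element precisely when $A\cap B$ is itself compact, in which case that meet equals $A\cap B$. Thus $S$ \emph{has binary meets if and only if $A\cap B$ is compact for every pair of compact slices $A,B$}, and it remains to show the latter is equivalent to $C_1$ being Hausdorff.

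For the easy direction, suppose $C_1$ is Hausdorff. Then every compact slice is closed, so for compact slices $A,B$ the set $A\cap B$ is the intersection of the compact set $A$ with the closed set $B$, hence compact; this gives binary meets. For the converse I would separate two distinct germs $x=[s,\varphi]$ and $y=[t,\psi]$. If $\varphi\ne\psi$ then $d(x)\ne d(y)$, and since $C_0=\widehat{P(S)}$ is a Hausdorff Stone space I pull back disjoint neighbourhoods of $\varphi,\psi$ along $d$. The interesting case is $\varphi=\psi$. Here $x\in\Theta(s)=:A$ and $y\in\Theta(t)=:B$, and by hypothesis $A\cap B$ is a compact slice, hence $A\cap B=\Theta(w)$ for a unique $w\in S$ (surjectivity of $\eta_S$), with $d(\Theta(w))=D_{w^*}$ by Lemma \ref{lem:aux1}. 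Since $x\ne y$ have the common $d$-value $\varphi$ and $d$ is injective on slices, $\varphi$ cannot lie in $D_{w^*}$: otherwise the germ $[w,\varphi]\in A\cap B\subseteq A$ would, by injectivity of $d|_A$, coincide with $x$, and likewise with $y$. As $D_{w^*}$ is clopen in the Stone space $C_0$, the set $E=C_0\setminus D_{w^*}$ is an open neighbourhood of $\varphi$; then $A\cap d^{-1}(E)\ni x$ and $B\cap d^{-1}(E)\ni y$ are open and disjoint, for any common point would lie in $(A\cap B)\cap d^{-1}(E)=\Theta(w)\cap d^{-1}(E)$, whose $d$-image is contained in $D_{w^*}\cap E=\varnothing$.

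I expect the crux to be this $(\Rightarrow)$ direction, precisely because one may \emph{not} assume that compact sets are closed in $C_1$ before Hausdorffness has been established, so the separation must be produced by hand. The device that resolves it is pushing the problem down to the base along $d$: the clopenness of the compact-open set $D_{w^*}$ in the Stone space $C_0$ is exactly what supplies the disjoint neighbourhoods. The only other point needing care is the clean identification in the first paragraph of binary meets in $S$ with compactness of $A\cap B$, which rests on the basis property of compact slices together with the observation that the greatest element of the compact subslices of $A\cap B$ exists only when it equals, and hence compactifies, all of $A\cap B$.
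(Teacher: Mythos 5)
Your proof is correct and follows essentially the same route as the paper: both directions reduce to the observation that binary meets correspond to compactness of $\Theta(s)\cap\Theta(t)$, and in the key case $\varphi=\psi$ your separating neighbourhoods $A\cap d^{-1}(E)$ and $B\cap d^{-1}(E)$ are exactly the paper's sets $\Theta(s)\setminus\Theta(s\wedge t)$ and $\Theta(t)\setminus\Theta(s\wedge t)$, merely justified topologically (preimage of the clopen complement of $D_{(s\wedge t)^*}$ under $d$) rather than via the relative complement in the Boolean algebra of elements below $s$. The extra preliminary reduction identifying meets with compactness of intersections is sound and only makes explicit what the paper leaves implicit.
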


\begin{proof}
let $C=(C_1, C_0)$ be the category of germs of $S$ and $s,t\in S$. Suppose first that $S$ has binary meets.  Let $[s,\varphi], [t,\psi]$ be germs. If $\varphi\neq \psi$, there are disjoint basic neighborhoods $U_{\varphi}$ and $U_{\psi}$ of $\varphi$ and $\psi$ in the Hausdorff space $C_0$. Then $\Theta(s)1_{U_{\varphi}}$ and $\Theta(t)1_{U_{\psi}}$ are disjoint open sets containing $[s,\varphi]$ and $[t,\psi]$, respectively. Suppose now that $\varphi=\psi$. Since $S$ is isomorphic to $\eta_S(S)$, the latter semigroup has binary meets. Then $\Theta(s)\wedge \Theta(t)= \Theta(s)\cap \Theta(t) = \Theta(s\wedge t)$. Since the Boolean algebra of elements below $s$ is isomorphic to the Boolean algebra of the elements below $s^*$ (via the map $u\mapsto u^*$), it follows that $\Theta(s)\setminus \Theta(s\wedge t)$ is a compact slice and a similar conclusion holds for $\Theta(t)\setminus \Theta(s\wedge t)$. Since these two sets are disjoint neighborhoods of $s$ and $t$, respectively, the space $C_1$ is Hausdorff.

Conversely, suppose that $C_1$ is Hausdorff. Since $\Theta(s)$ and $\Theta(t)$ are both compact-open, so is their intersection $\Theta(s)\cap \Theta(t)$. It easily follows that $\Theta(s)\cap \Theta(t)$ is the meet of $\Theta(s)$ and $\Theta(t)$. Since $\eta_S$ is an isomorphism, $S$, too, has binary meets.
\end{proof}

The following can be proved similarly and is known from \cite{KL17}.

\begin{proposition}
Let $S$ be a Boolean birestriction semigroup. Then $S$ has binary meets if and only if the space of arrows of its category of germs is Hausdorff.    
\end{proposition}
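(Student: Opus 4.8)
The plan is to mirror the proof of the preceding proposition, replacing the Boolean restriction semigroup $C^a$ of compact slices by the Boolean birestriction semigroup $\widetilde{C}^a$ of compact bislices. First I would record the structural facts that legitimise this substitution. Since $S$ is birestriction, every element of $S$ is bideterministic, so $S$ is trivially an \'etale preBoolean range semigroup; hence by Corollary \ref{cor:ample_cat} and Proposition \ref{prop:lh1} its category of germs $C=(C_1,C_0)$ is \'etale with $r$ a local homeomorphism, and Proposition \ref{prop:char_etale}(2) then gives that $C$ is biample. By Lemma \ref{lem:8j1} each $\Theta(s)$ is a compact bislice, these form a basis of $C_1$, and by Theorem \ref{th:eq2} the map $\eta_S\colon S\to \widetilde{C}^a$, $s\mapsto \Theta(s)$, is an isomorphism of Boolean birestriction semigroups. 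Throughout I would use that for bislices the natural partial order is inclusion, so that binary meets, whenever they exist as compact bislices, are given by intersections.

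For the forward direction, assume $S$ has binary meets; via $\eta_S$ so does $\widetilde{C}^a$. To separate two distinct germs $[s,\varphi]$ and $[t,\psi]$ I would split into the same two cases as before. If $\varphi\neq\psi$, I pick disjoint basic neighbourhoods $U_\varphi,U_\psi$ in the Hausdorff space $C_0$ and take $\Theta(s)1_{U_\varphi}$ and $\Theta(t)1_{U_\psi}$ as disjoint open neighbourhoods of the two germs. If $\varphi=\psi$, then $\Theta(s)\wedge\Theta(t)=\Theta(s)\cap\Theta(t)=\Theta(s\wedge t)$ in $\widetilde{C}^a$; since the ideal of elements below $s$ is, via $u\mapsto u^*$, isomorphic to the generalized Boolean algebra $\{e\in P(S)\colon e\leq s^*\}$, relative complements are available, yielding $e\in P(S)$ with $s=(s\wedge t)\vee se$ and $(s\wedge t)\perp se$. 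Then $\Theta(se)=\Theta(s)\setminus\Theta(s\wedge t)$ is a compact bislice containing $[s,\varphi]$, and symmetrically for $t$, and these are disjoint, proving $C_1$ is Hausdorff.

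Conversely, if $C_1$ is Hausdorff then for any $s,t\in S$ the set $\Theta(s)\cap\Theta(t)$ is open and, being closed in the compact set $\Theta(s)$, is compact; as an intersection of bislices it is a bislice, hence lies in $\widetilde{C}^a$ and is plainly the meet of $\Theta(s)$ and $\Theta(t)$. Transporting this back along the isomorphism $\eta_S$ shows that $S$ has binary meets. The one step requiring genuine care — which I expect to be the main obstacle — is the relative-complement argument in the case $\varphi=\psi$: one must check that $\Theta(se)$ is exactly $\Theta(s)\setminus\Theta(s\wedge t)$ and is a compact bislice, which rests on $(s\wedge t)\perp se$ forcing $\Theta(s\wedge t)\cap\Theta(se)=\Theta(0)=\varnothing$ together with $s=(s\wedge t)\vee se$ giving $\Theta(s)=\Theta(s\wedge t)\cup\Theta(se)$ via the join formula for $\Theta$ established in Theorem \ref{th:adjunction1}. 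Everything else transfers essentially verbatim from the restriction case.
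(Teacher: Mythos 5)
Your proposal is correct and follows essentially the same route as the paper, which simply notes that this proposition "can be proved similarly" to the preceding one for Boolean restriction semigroups: you carry out exactly that adaptation, replacing $C^a$ by $\widetilde{C}^a$ and justifying the substitution via Lemma \ref{lem:8j1}, Proposition \ref{prop:lh1} and the isomorphism $\eta_S\colon S\to\widetilde{C}^a$ from Theorem \ref{th:eq2}. The relative-complement step and the two-case separation argument match the paper's proof of the preceding proposition verbatim in structure, so there is nothing further to add.
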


We obtain the following corollary.

\begin{corollary} An \'etale Boolean range semigroup $S$ has binary meets if and only if the Boolean birestriction semigroup ${\mathscr{BD}}(S)$ has binary meets.
\end{corollary}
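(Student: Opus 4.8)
The plan is to reduce the statement to the two preceding propositions of this subsection by using Proposition~\ref{prop:lh1} to identify the two relevant categories of germs. The whole argument is a chaining of three facts, so the first task is to check that the hypotheses of those propositions are actually met.

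First I would record the hypotheses. Since $S$ is an \'etale Boolean range semigroup, its reduct $(S,\cdot,^*)$ is by definition a Boolean restriction semigroup, and $S$ has local units because every range semigroup satisfies $s=s^+s$ with $s^+\in P(S)$, so $s^+$ is a left local unit of $s$. Hence the preceding proposition for Boolean restriction semigroups with local units applies to $S$. On the other hand, $\mathscr{BD}(S)$ is a Boolean birestriction semigroup: by Corollary~\ref{cor:det} it is a birestriction $(2,1,1)$-subalgebra of $S$ with $P(\mathscr{BD}(S))=P(S)$, and its being Boolean is exactly the content recorded in Theorem~\ref{th:eq3} (equivalently in Proposition~\ref{prop:det}), so that the preceding proposition for Boolean birestriction semigroups applies to $\mathscr{BD}(S)$.

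The key step is to invoke Proposition~\ref{prop:lh1}. Since an \'etale Boolean range semigroup is in particular an \'etale preBoolean range semigroup, that proposition tells us that the category of germs $C=(C_1,C_0,d,r,u,m)$ of $S$ \emph{coincides} with the category of germs of $\mathscr{BD}(S)$. In particular the two categories have literally the same space of arrows $C_1$, not merely homeomorphic ones, which is what makes the comparison immediate.

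Combining the three ingredients, I would chain the equivalences as follows: by the proposition for $S$, the semigroup $S$ has binary meets if and only if $C_1$ is Hausdorff; by Proposition~\ref{prop:lh1}, this same $C_1$ is the arrow space of the category of germs of $\mathscr{BD}(S)$; and by the proposition for Boolean birestriction semigroups, $C_1$ is Hausdorff if and only if $\mathscr{BD}(S)$ has binary meets. Hence $S$ has binary meets if and only if $\mathscr{BD}(S)$ does. No step here is genuinely hard; the only points requiring care are verifying that $S$ and $\mathscr{BD}(S)$ satisfy the hypotheses of the respective propositions (done above) and explicitly using Proposition~\ref{prop:lh1} to recognise that the two spaces of arrows are the same space, so that the topological criterion "Hausdorff arrow space" is shared by both sides.
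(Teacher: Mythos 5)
Your proposal is correct and is precisely the argument the paper intends: the corollary is stated immediately after the two Hausdorffness criteria, and the only glue needed is Proposition~\ref{prop:lh1} identifying the two categories of germs, exactly as you chain it. Your checks that $S$ has local units (via $s^+s=s$) and that $\mathscr{BD}(S)$ is a Boolean birestriction semigroup are the right hypotheses to verify and are handled correctly.
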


Bearing in mind Remark \ref{rem:meets}, it follows that all our main results admit analogues for meet-semigroups and their morphisms of type $i$, where weakly meet-preserving morphisms (type 2) are replaced by meet-preserving ones. If one restricts attention only to meet-preserving morphisms, then only morphisms of types $2$ and $4$ remain under consideration.

\subsection{The groupoidal case}\label{subs:groupoidal}
The following is well known.

\begin{proposition}\cite{Paterson}\label{prop:pat}
Suppose that $G = (G_1, G_0, u,d,r,m,i)$ is an ample groupoid. Then $\widetilde{G}^a$ is an inverse semigroup.
\end{proposition}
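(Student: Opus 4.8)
The plan is to exhibit $\widetilde{G}^a$ as a regular semigroup whose idempotents commute, using the inversion inherited from the groupoid $G$, and then invoke the standard criterion (regular with commuting idempotents implies inverse), exactly as in the argument for $\mathscr{I}(S)$ in Lemma \ref{lem:true}(2). First I would record the ambient structure: since $G$ is an ample groupoid it is biample by Corollary \ref{cor:ample_biample}, and a biample category is strongly ample because a local homeomorphism is open. Hence Proposition \ref{prop:det}(2) applies and tells us that $\widetilde{G}^a$ is a Boolean birestriction semigroup; in particular it is already closed under the product of slices, so it is a semigroup, and it suffices to supply inverses.

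Next I would introduce the candidate inverse $A^{-1} := i(A) = \{a^{-1} : a\in A\}$ for a compact bislice $A$, and check $A^{-1}\in\widetilde{G}^a$. As $i$ is a continuous involution it is a homeomorphism, so $A^{-1}$ is compact; and since $d\circ i = r$ and $r\circ i = d$, the injectivity of $d$ and of $r$ on $A$ transfers to injectivity of $r$ and of $d$ on $A^{-1}$, whence $A^{-1}$ is again a compact bislice. The crux of the whole argument is then to use the bislice condition to force products to collapse. To see that $A A^{-1} A = A$, I would take a typical element $a b^{-1} c$ with $a,b,c\in A$ and the required composability relations: composability of $(a,b^{-1})$ gives $d(a)=r(b^{-1})=d(b)$, so the slice property yields $a=b$ and $a b^{-1} = 1_{r(a)}$; composability of $(1_{r(a)},c)$ gives $r(a)=r(c)$, so the coslice property yields $a=c$, and the product equals $a\in A$. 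Conversely $a = a\,a^{-1}a \in A A^{-1}A$ for every $a\in A$. Thus $A A^{-1} A = A$, and symmetrically $A^{-1} A A^{-1} = A^{-1}$, so $\widetilde{G}^a$ is regular with generalized inverse $A^{-1}$.

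It remains to show that idempotents commute, for which I would show they are precisely the projections. The same collapsing argument does this: if $E\in\widetilde{G}^a$ satisfies $E^2=E$ and $a\in E=E^2$, write $a=bc$ with $b,c\in E$ composable; then $d(a)=d(c)$ and $r(a)=r(b)$ force $a=c$ and $a=b$ by the slice and coslice properties, so $a=a^2$ with $d(a)=r(a)$, and multiplying by $a^{-1}$ shows $a=1_{d(a)}$. Hence $E\subseteq u(G_0)$, so $E$ is a projection, i.e.\ a compact-open subset of $u(G_0)$; such sets commute under the product, which is intersection of identity sets. Being a regular semigroup with commuting idempotents, $\widetilde{G}^a$ is an inverse semigroup. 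The only delicate point is the repeated use of the bislice property to collapse products in the computations of $A A^{-1} A$ and of the idempotents; once that is isolated the remaining verifications are routine.
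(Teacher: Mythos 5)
Your proof is correct. The paper gives no argument of its own here --- it simply records the statement as well known and cites Paterson --- and what you supply is exactly the standard argument: inversion of the groupoid carries compact bislices to compact bislices, the slice/coslice conditions collapse $AA^{-1}A$ to $A$ and force idempotents into $u(G_0)$, and then the criterion ``regular with commuting idempotents'' (as in Lemma~\ref{lem:true}(2)) finishes the job. All the delicate points --- composability conventions, $d\circ i=r$, $r\circ i=d$, and the reduction of an idempotent bislice to a compact-open subset of the unit space --- are handled correctly, so nothing is missing.
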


In the next proposition we characterize when a biample category admits a structure of an ample groupoid.

\begin{proposition}\label{prop:6j3}
Suppose $C$ is a biample category. 
The following are equivalent:
\begin{enumerate}
\item There is a homeomorphism $i\colon C_1\to C_1$, so that $(C_1,C_0,u,d,r,m,i)$ is a groupoid.
\item  ${\mathscr{BD}}(C^a) = {\mathscr{I}}(C^a) = \widetilde{C}^a$.
\end{enumerate}
\end{proposition}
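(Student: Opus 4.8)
The plan is to first observe that, since $C$ is biample it is in particular strongly ample (the range map $r$ is a local homeomorphism, hence open), so Proposition~\ref{prop:det}(2) gives ${\mathscr{BD}}(C^a) = \widetilde{C}^a$ unconditionally. Combined with the inclusion ${\mathscr{I}}(C^a) \subseteq {\mathscr{BD}}(C^a)$ of Lemma~\ref{lem:6j2}(2), condition (2) is equivalent to the single inclusion $\widetilde{C}^a \subseteq {\mathscr{I}}(C^a)$, that is, to the assertion that \emph{every compact bislice is a partial isomorphism}. The whole proof then reduces to showing that this property is equivalent to the existence of a continuous inversion $i$ making $C$ a groupoid.

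For $(1) \Rightarrow (2)$, assume $i$ exists. Given a compact bislice $A$, I would set $A^{-1} = i(A)$; since $i$ is a homeomorphism this is compact-open, and the identities $d(a^{-1}) = r(a)$, $r(a^{-1}) = d(a)$ together with the slice/coslice properties of $A$ show that $A^{-1}$ is again a compact bislice. Using that $A$ is a local section and $A^{-1}$ a local cosection to control composability, a direct computation gives $A A^{-1} = \{a a^{-1} : a \in A\} = 1_{r(A)} = (A^{-1})^*$ and $A^{-1} A = \{a^{-1} a : a\in A\} = 1_{d(A)} = A^*$, which is exactly the statement that $A$ is a partial isomorphism with partial inverse $A^{-1}$. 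Hence $\widetilde{C}^a \subseteq {\mathscr{I}}(C^a)$.

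For $(2) \Rightarrow (1)$, I would build $i$ pointwise. Fix $a \in C_1$ and, using that compact bislices form a basis (Lemma~\ref{lem:basis}(2)), choose a compact bislice $A \ni a$; by hypothesis $A \in {\mathscr{I}}(C^a)$, with partial inverse $A'$, which is itself a compact bislice by Lemma~\ref{lem:true} and the reduction above. The key step is to extract from the \emph{global} slice-level identities $A'A = A^* = 1_{d(A)}$ and $AA' = (A')^* = 1_{d(A')}$ a genuine \emph{pointwise} two-sided inverse. Since $1_{d(a)} \in 1_{d(A)} = A'A$, there is a composable pair $a' \tilde a = 1_{d(a)}$ with $a' \in A'$, $\tilde a \in A$; because $A$ is a local section and $d(\tilde a) = d(a)$ we get $\tilde a = a$, so $a'a = 1_{d(a)}$ and $r(a') = d(a)$. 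This last equality makes $(a,a')$ composable, whence $aa' \in AA' = 1_{d(A')}$ is an identity, forcing $aa' = 1_{r(a)}$. Thus $a'$ is a two-sided categorical inverse of $a$, and I would define $a^{-1} := a'$. The main obstacle is precisely this passage from the semigroup equations to pointwise inverses, and it is exactly the local section/cosection (injectivity of $d$, resp.\ $r$) properties of the bislices $A$ and $A'$ that make the argument go through.

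It remains to check that $i \colon a \mapsto a^{-1}$ is well defined and a homeomorphism, and that it satisfies the groupoid axioms. Well-definedness (independence of the choice of $A$) follows from the standard uniqueness of two-sided inverses in a category. For continuity, I would note that $a \mapsto a^{-1}$ sets up a bijection $A \to A'$ with $r(A') = d(A)$, and that on $A$ it factors as $(r|_{A'})^{-1} \circ d|_A$, a composite of homeomorphisms (here $d|_A$ and $r|_{A'}$ are homeomorphisms because $A$ is a slice, $A'$ a coslice, and $d,r$ are open); since such bislices $A$ form an open cover of $C_1$ and $i \circ i = \mathrm{id}$, the map $i$ is a homeomorphism. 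The involutive and groupoid identities $d(a^{-1}) = r(a)$, $(a^{-1})^{-1} = a$, $(ab)^{-1} = b^{-1}a^{-1}$, $aa^{-1} = 1_{r(a)}$, $a^{-1}a = 1_{d(a)}$ then follow by routine computations from the construction and the uniqueness of inverses, completing $(2)\Rightarrow(1)$.
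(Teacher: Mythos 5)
Your proposal is correct, and the reduction you make at the outset --- that since $C$ is biample it is strongly ample, so ${\mathscr{BD}}(C^a)=\widetilde{C}^a$ holds unconditionally by Proposition~\ref{prop:det}(2) and ${\mathscr{I}}(C^a)\subseteq{\mathscr{BD}}(C^a)$ by Lemma~\ref{lem:6j2}, whence (2) collapses to the single inclusion $\widetilde{C}^a\subseteq{\mathscr{I}}(C^a)$ --- is exactly the right way to frame the statement. However, your route differs genuinely from the paper's in both directions. For $(1)\Rightarrow(2)$ the paper simply cites Paterson's result that $\widetilde{G}^a$ is an inverse semigroup for an ample groupoid (Proposition~\ref{prop:pat}) and then notes that every element of an inverse semigroup is a partial isomorphism; your direct computation $AA^{-1}=1_{r(A)}=(A^{-1})^*$, $A^{-1}A=1_{d(A)}=A^*$ proves the same thing from scratch (and correctly uses the local section/cosection properties to see that $i(A)$ is again a compact bislice and that only diagonal products $aa^{-1}$ occur). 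For $(2)\Rightarrow(1)$ the paper is much terser: it observes that $\widetilde{C}^a={\mathscr{I}}(C^a)$ is an inverse semigroup and invokes the non-commutative Stone duality of \cite[Theorems 8.19, 8.20]{KL17} to conclude that its category of germs --- which is $C$ up to isomorphism by the equivalence already established --- is a groupoid. You instead build the inversion map $i$ by hand: extracting a pointwise two-sided inverse $a'$ from the slice-level identities via the local section property of $A$ and local cosection property of $A'$, checking uniqueness, and obtaining continuity from the local factorization $i|_A=(r|_{A'})^{-1}\circ d|_A$. Both arguments are sound; the paper's buys brevity at the cost of leaning on external machinery (Paterson and \cite{KL17}), while yours is self-contained, more elementary, and makes visible exactly where the bislice hypotheses are used --- at the price of the routine but nontrivial verifications of well-definedness, continuity, and the groupoid axioms that you rightly flag but only sketch.
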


\begin{proof}
Suppose that $(C_1,C_0,u,d,r,m,i)$ is a groupoid. Then $\widetilde{C}^a$ is an inverse semigroup by Proposition \ref{prop:pat} and $\widetilde{C}^a = {\mathscr{BD}}(C^a)$ by Proposition \ref{prop:det}(2). Since $\widetilde{C}^a$ is an inverse semigroup, every its element is a partial isomorphism, so that $\widetilde{C}^a \subseteq {\mathscr{I}}(C^a)$. The reverse inclusion holds by Lemma \ref{lem:6j2}.

Suppose now that ${\mathscr{BD}}(C^a) = {\mathscr{I}}(C^a) = \widetilde{C}^a$. Then $\widetilde{C}^a$ is an inverse semigroup, and by the non-commutative Stone duality \cite[Theorems 8.19, 8.20]{KL17} its biample category of germs is a groupoid. 
\end{proof}

Garner \cite{G23b} defines a Boolean restriction monoid $S$ to be {\em \'etale} if everyone of its elements is a join of elements from ${\mathscr{I}}(S)$. Propositions \ref{prop:etale} and \ref{prop:6j3} imply that Garner's \'etale Boolean restriction monoids admit the structure of range  monoids and are a special case of our \'etale Boolean range semigroups arising from ample categories which are groupoids. It is thus natural to call \'etale Boolean range monoids of \cite{G23b} {\em groupoidal}. 

We can readily restrict our results from \'etale Boolean range semigroups to groupoidal \'etale Boolean range semigroups, from biample categories to ample groupoids and from Boolean birestriction semigroups to Boolean inverse semigroups. In particular, the equivalences in Theorem \ref{th:eq2} and Theorem \ref{th:eq3} restrict to the following.

\begin{corollary}\label{cor:last}
The categories of Boolean inverse semigroups, groupoidal \'etale Boolean range semigroups and ample groupoids are pairwise equivalent.   
\end{corollary}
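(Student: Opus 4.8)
The plan is to obtain the three stated equivalences by restricting the equivalences already established in Theorem \ref{th:eq1}(3), Theorem \ref{th:eq2} and Theorem \ref{th:eq3} to the appropriate full subcategories, and then to invoke transitivity. Concretely, I would show that under the equivalence of Theorem \ref{th:eq3} between \'etale Boolean range semigroups (with bideterministic-element-preserving morphisms of type $i$) and Boolean birestriction semigroups ${\mathsf{BBR}}_i$, the \emph{groupoidal} \'etale Boolean range semigroups correspond exactly to the Boolean inverse semigroups; and that under the equivalence of Theorem \ref{th:eq1}(3) between \'etale Boolean range semigroups and biample categories, the groupoidal ones correspond exactly to the ample groupoids. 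Together with Theorem \ref{th:eq2} restricted from ${\mathsf{BBR}}_i$ to Boolean inverse semigroups, this yields the three pairwise equivalences.

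The substantive object-level matching rests on Proposition \ref{prop:6j3}. First I would record that a Boolean inverse semigroup is exactly a Boolean birestriction semigroup $S$ with $S = {\mathscr I}(S)$; since every element of an inverse semigroup is bideterministic (Lemma \ref{lem:6j2}(2)) and possesses a two-sided inverse, such an $S$ satisfies $S = {\mathscr{BD}}(S) = {\mathscr I}(S)$. Feeding this into Proposition \ref{prop:6j3}, the category of germs of $S$ is biample and the associated \'etale Boolean range semigroup $C^a$, whose bideterministic part is $\widetilde{C}^a \cong S$ (Proposition \ref{prop:det}, Proposition \ref{prop:etale}), has a category of germs which is an ample groupoid. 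Conversely, if $C$ is an ample groupoid, Proposition \ref{prop:pat} shows $\widetilde{C}^a$ is inverse, hence a Boolean inverse semigroup, and $C^a$ is groupoidal by the definition recalled before the statement. Thus the three defining conditions --- ``groupoidal'', ``inverse'', and ``the category of germs is a groupoid'' --- are translated into one another along the functors ${\mathsf C}_i$ and ${\mathsf{BD}}\circ{\mathsf S}_i$.

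For the restriction of equivalences to be legitimate I would check that these functors carry each distinguished subcategory into the next and that the subcategories are full. The object-level containments are exactly the two implications of the previous paragraph, so it remains to handle morphisms. A $(2,1,1)$-morphism between Boolean inverse semigroups is just a semigroup homomorphism and hence automatically preserves inverses, so the inclusion of Boolean inverse semigroups into ${\mathsf{BBR}}_i$ is full; likewise a type-$i$ morphism of groupoidal \'etale Boolean range semigroups, preserving bideterministic elements and therefore (since ${\mathscr{BD}} = {\mathscr I}$ here) partial isomorphisms, restricts to a morphism of the associated Boolean inverse semigroups, matching the functor ${\mathsf{BD}}$. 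On the topological side the only point to note is that for an ample groupoid every action is automatically injective (as shown immediately after the definition of an injective action), so that the morphisms of type $i$ with injective action used in Theorem \ref{th:eq2} coincide, for groupoids, with plain morphisms of type $i$. I expect this morphism bookkeeping to be the main obstacle: there is nothing deep, but one must verify that each of the three morphism classes of type $i$ is sent bijectively to the corresponding class under the restricted functors, so that the restricted adjunctions remain equivalences rather than merely adjunctions.
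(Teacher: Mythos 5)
Your proposal is correct and follows essentially the same route as the paper, which simply observes that the equivalences of Theorems \ref{th:eq1}(3), \ref{th:eq2} and \ref{th:eq3} restrict along the object-level identifications provided by Proposition \ref{prop:6j3} (together with Propositions \ref{prop:pat}, \ref{prop:det} and \ref{prop:etale}). The morphism bookkeeping you flag is exactly the content the paper leaves implicit in ``we can readily restrict,'' and your checks (inverse-preservation being automatic for inverse semigroups, injectivity of actions being automatic for groupoids) are the right ones.
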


Theorem \ref{th:eq3} implies that an \'etale Boolean range semigroup $S$ can be reconstructed from the Boolean birestriction semigroup ${\mathscr{BD}}(S)$. This is basically because both $S$ and ${\mathscr{BD}}(S)$ have the same category of germs from which $S$ is reconstructed via slices and ${\mathscr{BD}}(S)$ via bislices. In particular, a groupoidal \'etale range monoid $S$ can be reconstructed from the Boolean inverse monoid ${\mathscr{BD}}(S) = {\mathscr{I}}(S)$. This provides a different proof of a recent result by Lawson \cite{L24}.

\section*{Acknowledgements} I thank Richard Garner and Mark Lawson for useful conversations. I also thank the referee for a very thoughtful report and many useful comments and suggestions.

\end{document}